\definecolor{azure(colorwheel)}{rgb}{0.0, 0.5, 1.0}
\definecolor{hanpurple}{rgb}{0.32, 0.09, 0.98}
\definecolor{iris}{rgb}{0.35, 0.31, 0.81}
\definecolor{byzantine}{rgb}{0.74, 0.2, 0.64}
\definecolor{ashgrey}{rgb}{0.7, 0.75, 0.71}
\definecolor{battleshipgrey}{rgb}{0.52, 0.52, 0.51}
\let\reftagform@=\tagform@
\def\tagform@#1{\maketag@@@{(\ignorespaces\textcolor{purple}{#1}\unskip\@@italiccorr)}}
\renewcommand{\eqref}[1]{\textup{\reftagform@{\ref{#1}}}}
\DeclareUrlCommand\ULurl@@{%
  \def\UrlLeft{\uline\bgroup}%
  \def\UrlRight{\egroup}}
\def\ULurl@#1{\hyper@linkurl{\ULurl@@{#1}}{#1}}
\DeclareRobustCommand*\ULurl{\hyper@normalise\ULurl@}
\def\lessim{\ \lower4pt\hbox{$
		\buildrel{\displaystyle <}\over\sim$}\ }
\def\gessim{\ \lower4pt\hbox{$\buildrel{\displaystyle >}
		\over\sim$}\ }
\numberwithin{equation}{section}
\newcommand{\R}{\mathbb{R}}
\newcommand{\E}{\mathbb{E}}
\newcommand{\Prob}{\mathbb{P}}
\newcommand{\N}{\mathbb{N}}
\newcommand{\Z}{\mathbb{Z}}
\newcommand{\F}{\mathbb{F}}
\newcommand{\bnd}{\partial}
\newcommand{\ind}{\mathbbm{1}}
\newcommand{\diam}{\mathrm{diam}}
\newcommand{\Aut}{\mathrm{Aut}}
\newcommand{\connectsthru}[1]{\xleftrightarrow{#1}}
\newcommand{\connects}{\leftrightarrow}
\newtheorem{thm}{Theorem}
\newtheorem{lemma}{Lemma}[section]
\newtheorem{defn}{Definition}
\newtheorem{rmk}{Remark}[section]
\newtheorem{prop}[lemma]{Proposition}
\title{Chemical distance in graphs of polynomial growth}
	\author{Christian Gorski$^1$}
	\address[Christian Gorski]{Department of Mathematics, University of Washington, Seattle, USA}
	\email{cgorski1@uw.edu}
	\thanks{$^1$Department of Mathematics, University of Washington, Seattle, USA}
	\author{Eviatar B. Procaccia$^2$}
	\address[Eviatar B. Procaccia]{Faculty of Data and Decision Sciences, Technion - Israel Institute of Technology, Haifa, Israel}
	\email{eviatarp@technion.ac.il}
	\thanks{$^2$Faculty of Data and Decision Sciences, Technion - Israel Institute of Technology, Haifa, Israel}
\begin{document}
\begin{abstract}
We prove an Antal-Pisztora type theorem for transitive graphs of polynomial growth. That is,
we show that if $G$ is a transitive graph of polynomial growth and $p > p_c(G)$,
then for any two sites $x, y$ of $G$ which are connected by a $p$-open path,
the chemical distance from $x$ to $y$ is at most a constant times the original graph distance,
except with probability exponentially small in the distance from $x$ to $y$.
We also prove a similar theorem for general Cayley graphs of finitely presented groups,
for $p$ sufficiently close to 1.
Lastly, we show that all time constants for the chemical distance on the infinite supercritical
cluster of a transitive graph of polynomial growth are Lipschitz continuous as a function of $p$
away from $p_c$.
\end{abstract} 
\maketitle
\tableofcontents

\section{Introduction} \label{sec:intro}
Consider Bernoulli percolation on a graph $G = (V,E)$; we denote
by $G_p$ the associated subgraph of $G$.
For $x,y \in V(G_p)$ we define the intrinsic distance or ``chemical distance''
between $x$ and $y$ to be
\[
   d_{G_p}(x,y) := \inf \{ \ell(\pi) : \pi \mbox{ is an open path from } x \mbox{ to } y \mbox{ in } G_p \}.
\]
Here $\ell(\pi)$ is the length of $\pi$, that is, the number of edges in $\pi$, or equivalently one less
than the number of vertices in $\pi$.
Note that, for deterministic reasons, $d_G(x,y) \le d_{G_p}(x,y)$, 
where $d_G$ is the usual graph metric on $G$.
Antal and Pisztora \cite{AP} proved that, if $G$ is the standard Cayley graph
of $\Z^d$, in the supercritical phase, $d_{G_p}(x,y)$ is at most a constant times $d_G(x,y)$ with
very high probability as $d_G(x,y) \to \infty$.

We prove the following analogous theorem for transitive graphs of polynomial growth:

\begin{thm} \label{thm:main}
   Let $G = (V,E)$ be a transitive graph of polynomial growth.
   Then, for every $p_0 > p_c$, there exist $c(p_0) >0$ and $K(p_0) < \infty$
   such that for all $x,y \in G$, for all $t \ge d_G(x,y)$, and for all $p \in [p_0,1]$ we have
   \[
      \Prob \left( d_{G_p}(x,y) \ge K t , x \connectsthru{G_p} y \right) \le \exp(- c t).
   \]
   In particular, for some $C(p) < \infty$,
   \[
      \Prob \left( d_{G_p}(x,y) \ge K t \middle| x \connectsthru{G_p} y \right) \le C \exp(- c t).
   \]
\end{thm}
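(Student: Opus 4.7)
The plan is to adapt the Antal--Pisztora renormalization scheme to the polynomial growth setting. First I would fix a large scale $L$ and choose an $L$-net $\mathcal{N} \subseteq V(G)$; using the uniform polynomial growth (and Trofimov/Gromov's structure theorem, so that $G$ is quasi-isometric to a virtually nilpotent Cayley graph) the balls $B(v,L)$ for $v \in \mathcal{N}$ cover $V$ with bounded multiplicity. Declare $v \in \mathcal{N}$ to be \emph{good} if, in the restriction of the $p$-open configuration to $B(v, 3L)$, there is a unique ``giant'' cluster that meets every sub-ball of radius $L/10$ and coincides with $\mathcal{C}_\infty \cap B(v, 3L)$; call $v$ \emph{bad} otherwise. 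Goodness depends only on edges within distance $3L$ of $v$, hence defines a finite-range-dependent site process on $\mathcal{N}$.

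The probabilistic input is $\Prob(v \text{ bad}) \le \exp(-cL)$ for $p \ge p_0 > p_c$. This rests on supercritical structural results for polynomial growth transitive graphs: uniqueness of the infinite cluster, exponential decay of finite cluster sizes, and local uniqueness of the giant cluster at scale $L$ (the polynomial-growth analogues of the $\Z^d$ slab/renormalization results). Given this decay, Liggett--Schonmann--Stacey stochastically dominates the bad process by Bernoulli site percolation on $\mathcal{N}$ with parameter $q(L) \to 0$. Choose $L$ so that $q(L)$ is well below the site-percolation threshold for the net.

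The geometric/deterministic step goes as follows. Given $x,y$ with $x \connectsthru{G_p} y$, take a $G$-geodesic and extract net points $v_0 = x', v_1, \dots, v_n = y'$ along it with $n \asymp d_G(x,y)/L$. Within a maximal run of good $v_i$'s, the giant clusters coincide locally with $\mathcal{C}_\infty$, and local uniqueness lets one construct an open subpath from the giant cluster at $v_i$ to that at $v_{i+1}$ of length $O(L)$; summed over good steps this contributes $O(d_G(x,y))$. When the geodesic enters a connected bad component of diameter $r$ (in the $\mathcal{N}$ graph), one reroutes using the assumed connection $x \connectsthru{G_p} y$ and pays $O(r)$. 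A Peierls/counting argument combined with the $\exp(-cL)$ marginal shows that the probability of encountering bad components of total diameter exceeding $\varepsilon t$ along the geodesic is at most $\exp(-c't)$, after which choosing $K$ large absorbs the good and bad contributions. The conditional bound then follows from $\Prob(x \connectsthru{G_p} y) \ge \Prob(x,y \in \mathcal{C}_\infty) \ge \theta(p)^2 > 0$.

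The main obstacle will be establishing the $\exp(-cL)$ badness bound on general transitive polynomial growth graphs, where the classical $\Z^d$ arguments based on axis-aligned slabs and block events are unavailable; this requires the recent local-uniqueness / finite-cluster tail machinery for supercritical percolation on such graphs. A secondary obstacle is the purely geometric ``detour'' construction: one must use the virtual nilpotency and quasi-transitivity to build efficient local rerouting around bad clusters without any coordinate system, ensuring the reroute length is linear in the bad cluster's diameter rather than in the ambient graph distance.
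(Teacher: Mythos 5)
Your renormalization skeleton (choose a net, define a local-uniqueness goodness event at scale $L$, get $\exp(-cL)$ badness from recent supercritical results, apply Liggett--Schonmann--Stacey, then control the total size of bad clusters encountered) matches the paper's plan, and your framing of the $\exp(-cL)$ bound as ``the main obstacle'' is backwards: that estimate is available off-the-shelf from Contreras--Martineau--Tassion and the paper simply cites it.

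The genuine gap is precisely what you relegate to a ``secondary obstacle'': the deterministic detour construction. You write that when the geodesic hits a bad component of $\mathcal{N}$-diameter $r$ one ``reroutes using the assumed connection $x \connectsthru{G_p} y$ and pays $O(r)$,'' but this is an assertion, not an argument, and it is exactly where the classical $\Z^d$ proof leans on the lattice structure. Three specific problems. First, knowing that \emph{some} open path joins $x$ to $y$ does not give an open path of length $O(r)$ that skirts a bad cluster of diameter $r$; the witnessing open path may wander arbitrarily far before returning, so there is no local control without a further geometric input. Second, the input you actually need is a uniform statement of the form: in the renormalized graph at scale $L$, for any finite obstacle $F$, if $x$ and $y$ can be joined avoiding $F$ at all, then they can be joined by a path inside a bounded neighborhood $N(F,\Delta)$ of $F$ together with the original geodesic --- with $\Delta$ \emph{independent of} $L$. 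This is nontrivial: it amounts to showing the coarse-grained graph is $\Delta$-simply connected for a $\Delta$ uniform over all scales, and this does not follow merely from the Trofimov/Gromov quasi-isometry to a nilpotent group, since the quasi-isometry constants (and hence the coarse-simple-connectedness constant) of the scale-$L$ renormalization could a priori grow with $L$; the paper proves the uniform bound via an explicit chain-map argument pushing $1$-cycles of $G$ forward to $\hat G$. Third, stepping along a geodesic and patching over bad runs fails if $x$ or $y$ is buried inside a bad cluster, or if one must cross nested bad clusters from both ends before reaching a common good region (this is exactly the case-checking Antal and Pisztora's Lemma 3.2 wrestles with). The paper sidesteps this by never abandoning the original open path $\pi$ from $x$ to $y$: it iteratively modifies $\pi$ to pull excursions back toward $I_{x,y} \cup N(F,\Delta)$, rather than building a path from scratch out of good blocks. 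Without the coarse-simple-connectedness lemma and the path-modification viewpoint, your sketch does not close.
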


The same proof gives us a similar theorem for sufficiently large $p$ for any Cayley graph 
of a finitely presented group\footnote{In fact, the first part of Theorem \ref{thm:finitelypresented} holds
for any infinite connected graph which is $\Delta$-simply connected and has maximum degree at most $D$. We do not state that version of the theorem here
because (1) $\Delta$-simple connectedness has not been defined here yet; and (2) the second statement of the theorem
makes reference to the uniqueness threshold $p_u$ of $G$, which may not be defined if $G$ is not quasitransitive.}
\footnote{
Note also that the first part of Theorem \ref{thm:finitelypresented} is not interesting if $\Prob( x \connectsthru{G_p} y )$ already decays exponentially in $d_G(x,y)$.
Moreover, the second part of the theorem is vacuous unless $p_u <1$.
A theorem of Benjamini and Babson \cite{BenjaminiBabson} shows that when $G$ is the Cayley graph
of a finitely presented group, $p_u < 1$ if and only if $G$ is one-ended.}:
\begin{thm} \label{thm:finitelypresented}
   For each $D, \Delta \le \infty$, there exists $\tilde{p} = \tilde{p}(D, \Delta) < 1$ such that the following holds.
   Let $G = (V,E)$ be the Cayley graph of a finitely presented group $\langle S | R \rangle$
   with respect to the generating set $S$. Suppose that $|S| \le D$ and
   suppose that every relator in $R$ has word length at most $\Delta$.
   Then, for every $p_0 > \tilde{p}$, there exist $c(p_0) >0$ and $K(p_0) < \infty$
   such that for all $x,y \in G$, for all $t \ge d_G(x,y)$, and for all $p \in [p_0,1]$ we have
   \[
      \Prob \left( d_{G_p}(x,y) \ge K t , x \connectsthru{G_p} y \right) \le \exp(- c t).
   \]
   In particular, if in addition $p > p_u$ (where $p_u$ is the uniqueness threshold for $G$),
   then for some $C(p) < \infty$,
   \[
      \Prob \left( d_{G_p}(x,y) \ge K t \middle| x \connectsthru{G_p} y \right) \le C \exp(- c t).
   \]
\end{thm}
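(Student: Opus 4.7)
The plan is to reduce to a combinatorial surgery along a $G$-geodesic, exploiting the fact that in a Cayley graph of $\langle S \mid R \rangle$ with all relators of length at most $\Delta$, the associated $2$-complex is ``$\Delta$-simply connected'': any cycle in $G$ bounds a van Kampen diagram whose $2$-cells are relator disks of perimeter at most $\Delta$. This is what allows closed edges to be replaced by local detours built from relators.

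First I would fix a $G$-geodesic $\gamma = e_1 e_2 \cdots e_L$ from $x$ to $y$, so $L = d_G(x,y)$. For each edge $e \in \gamma$, let $\mathcal{R}(e)$ be the set of relator disks having $e$ on their boundary; by the bounds on $|S|$ and relator length this set has size bounded in terms of $D$ and $\Delta$. Call $e$ \emph{locally good} if at least one disk in $\mathcal{R}(e)$ has all of its remaining edges open, producing an open detour around $e$ of length at most $\Delta - 1$. Since clustered closed edges break naive local surgery, I would upgrade this to a mesoscopic notion: at a scale $r = r(D,\Delta)$, declare a vertex $v$ on $\gamma$ \emph{good} if every closed edge in $B_G(v, r)$ admits, within $B_G(v, 2r)$, an open replacement path of length at most $C r$. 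Using $\Delta$-simple connectedness, ``$v$ good'' is implied by the event that no relator disk intersecting $B_G(v, 2r)$ contains two or more closed edges, whose probability tends to $1$ as $p \to 1$; this defines the threshold $\tilde p = \tilde p(D,\Delta)$.

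The goodness events are finite-range dependent along $\gamma$, so by the Liggett--Schonmann--Stacey domination theorem they stochastically dominate an i.i.d.\ Bernoulli site process of density arbitrarily close to $1$, once $p_0$ is sufficiently close to $1$. A standard Peierls/large-deviations bound then gives that, except on an event of probability $\exp(-cL)$, the total length of maximal bad runs along $\gamma$ is at most $\epsilon L$ for an $\epsilon$ of our choice. Along the good portion we follow $\gamma$ edge by edge, replacing each closed edge by its $O(1)$-length local detour; across a bad run of length $\ell$, the good vertices at either end supply, via their mesoscopic open structure, a bypass of length at most $C' \ell$. Summing yields an open path from $x$ to $y$ of length at most $K L$, proving the desired bound on the event $\{x \connectsthru{G_p} y\}$.

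The main obstacle I anticipate is constructing the bypass across a bad run: this requires combining $\Delta$-simple connectedness with goodness of the bordering blocks to stitch a bounded number of open relator disks into a single detour of length linear in the run. I would proceed by induction on the number of closed edges in the run, using one good disk at each step to reduce the count while keeping the cumulative length comparable to the run length via the diameter bound built into the definition of goodness. Finally, for the ``in particular'' statement, once $p > p_u$ the uniqueness of the infinite cluster together with the Harris--FKG inequality and transitivity gives $\Prob(x \connectsthru{G_p} y) \ge \theta(p)^2 > 0$, so conditioning on connectivity only multiplies the bound by $C = \theta(p)^{-2}$.
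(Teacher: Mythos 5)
Your high-level idea -- exploit the $\Delta$-simple connectedness coming from bounded relators to replace closed edges by relator-disk detours -- is the right intuition, and your treatment of the ``in particular'' part via Harris--FKG and uniqueness matches the paper. But there is a genuine gap exactly at the place you flag as the main obstacle, and I do not think the induction you sketch can close it.

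The issue is that you attempt to construct the short open path from $x$ to $y$ using only control of good/bad densities along the geodesic $\gamma$, without ever invoking the assumption $x \connectsthru{G_p} y$. This cannot work. Consider a configuration in which the good/bad structure along $\gamma$ is excellent but a thin annulus of closed edges surrounds $x$ without touching $\gamma$: then $x$ is disconnected from $y$, yet your decomposition sees at most one short bad run, and the ``bypass of length $O(\ell)$ across a bad run'' that your argument requires simply does not exist. More generally, within a bad run the closed edges cluster, so the step ``use one good disk to reduce the count'' is vacuous (by definition a bad region has no such disk), and the off-geodesic edges your bypass must traverse are not controlled by the along-$\gamma$ Peierls estimate. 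What is needed, and what the paper's Lemma~\ref{lem:obstacles} supplies, is a combinatorial fact that converts the \emph{existence} of \emph{some} open path from $x$ to $y$ (this is where $x \connectsthru{G_p} y$ enters) into an open path confined to a $\Delta$-neighborhood of $[x,y]$ together with the $\Delta$-closed clusters $F$ touching it, while avoiding $F$. The paper proves this by working in the cycle space of $G$: write $[x,y] \oplus \gamma$ as a sum of small cycles, retain only those small cycles meeting $F$, and add them to $[x,y]$; the resulting $1$-chain has boundary $\{x,y\}$, lies in $N(F,\Delta) \cup [x,y]$, and avoids $F$. This homological step is the crux, and your proposal replaces it with an informal induction that does not engage the fundamental obstruction (a separating closed cluster). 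The rest of the paper's argument -- $F$ stochastically bounded by a sum of independent preclusters with exponential tails, and the bound $d_{G_p}(x,y) \le d_G(x,y) + O((2|S|)^\Delta)|F|$ -- then goes through without needing the LSS domination or the mesoscopic block structure you introduce, which here only add complexity. So: right starting idea, genuinely different route, but the route as written has a hole at the bypass step that the paper's cycle-space lemma is precisely designed to fill.
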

The proof of Theorem \ref{thm:finitelypresented} is presented at the end of Section \ref{sec:proof_of_lem_problem}.
Although it seems plausible that an Antal-Pisztora-like theorem could hold
in the uniqueness regime of any (infinite bounded degree) quasitransitive graph,
to our knowledge, until now it has not been shown even for sufficiently large $p$
in any geometry which is not ``$\Z^d$-like''. 

Theorem \ref{thm:main} also allows us to prove Lipschitz continuity of the time constants associated to the chemical distance
on the infinite supercritical percolation cluster for transitive graphs of polynomial growth 
(see the discussion at the beginning of Section \ref{sec:continuity}).
That is, defining $D_p(o,x)$ to be the chemical distance between the sites of the infinite cluster
which are $d_G$-closest to $o$ and $x$
respectively, we have the following theorem:
\begin{thm} \label{thm:cty}
   Above criticality, the time constants associated to $D_p$ are Lipschitz continuous in $p$ in the following sense.
   For each $p_0 > p_c$, there exists $C(p_0)$ such that
   \[
      \limsup_{x \to \infty} \left| \frac{ \E D_p(o, x) }{ d_G(o,x) } - \frac{\E D_q(o,x) }{ d_G(o,x) } \right|
      \le C(p_0)|p-q|
   \]
   for all $p,q \in [p_0, 1]$.
   In particular, given any sequence $x_n \to \infty$ in $V$, if we define
   \begin{align*}
      \overline{d}(p) := \limsup_{n \to \infty} \frac{ \E D_p(o,x_n) }{ d_G(o, x_n) } & &
      \underline{d}(p) := \liminf_{n \to \infty} \frac{\E D_p(o,x_n) }{ d_G(o,x_n) },
   \end{align*}
   then for any $p_0 > p_c$, both $\overline{d}$ and $\underline{d}$ 
   are Lipschitz functions of $p$ on $[p_0,1]$.
\end{thm}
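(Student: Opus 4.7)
I would prove Theorem~\ref{thm:cty} via the standard monotone coupling: let $(U_e)_{e \in E}$ be i.i.d.\ uniforms on $[0,1]$ and $G_r := \{e : U_e \le r\}$, so $G_p \subseteq G_q$ whenever $p \le q$. Since transitive polynomial-growth graphs are amenable, the infinite cluster of $G_r$ is unique for each $r > p_c$, and the infinite $p$-cluster is contained in the infinite $q$-cluster. It suffices to prove, for $p_0 \le p < q \le 1$,
\begin{equation*}
   \bigl|\E D_p(o,x) - \E D_q(o,x)\bigr| \le C(p_0)\,|p-q|\,d_G(o,x) + O(1),
\end{equation*}
since dividing by $d_G(o,x)$ and taking $\limsup$ as $x \to \infty$ kills the additive term. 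The easier direction $\E D_q \le \E D_p + O(1)$ follows because any $p$-open path is $q$-open, and the basepoint discrepancy $d_{G_q}(\sigma_q(v),\sigma_p(v))$ has $O(1)$ expectation by Theorem~\ref{thm:main} applied inside $G_q$ together with the standard fact that $\E\, d_G(v,\sigma_r(v)) = O(1)$ uniformly in $r \in [p_0,1]$ (a consequence of exponential decay of supercritical finite-cluster sizes).

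\textbf{The hard direction.} For the reverse inequality, choose a $q$-geodesic $\gamma_q = (w_0,\ldots,w_L)$ from $\sigma_q(o)$ to $\sigma_q(x)$ in a way depending only on $G_q$ (e.g.\ lexicographically). Conditional on $\gamma_q$, the variables $(U_e)_{e \in \gamma_q}$ are i.i.d.\ uniform on $[0,q]$, so the set of \emph{bad} edges $B := \{e \in \gamma_q : U_e \in (p,q]\}$ satisfies $\E|B| = \tfrac{q-p}{q}\,\E L$, and Theorem~\ref{thm:main} gives $\E|B| \le C(p_0)\,|p-q|\,d_G(o,x) + O(1)$. Decompose $\gamma_q$ into its maximal $p$-open subpaths $\gamma^{(1)}, \ldots, \gamma^{(|B|+1)}$, separated by the bad edges; each $\gamma^{(j)}$ lies inside a single $p$-cluster, which is either the infinite one or a finite one. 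I would then build a $p$-open path from $\sigma_p(o)$ to $\sigma_p(x)$ by following every ``infinite-cluster'' $\gamma^{(j)}$ directly and, between two consecutive such subpaths, detouring through the infinite $p$-cluster; by Theorem~\ref{thm:main} applied inside $G_p$, each detour has expected length at most a constant multiple of the total $d_G$-length of the intermediate bad edges and finite-cluster subpaths, plus $O(1)$.

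\textbf{Main obstacle and conclusion.} The key point to verify is that the expected total length of the finite-cluster subpaths $\gamma^{(j)}$ is $O(\E|B|)+O(1)$, not $\Theta(\E L)$ as a naive accounting (positive density of vertices off the infinite $p$-cluster) would suggest; this is precisely what forces the excess cost over $\E L$ to be bounded by $C(p_0)|p-q|d_G(o,x)+O(1)$ rather than something linear in $\E L$. I would prove this by bounding each finite-cluster $\gamma^{(j)}$'s length by $\diam(C_p(s_j))\cdot\ind\{|C_p(s_j)|<\infty\}$, where $s_j$ is the starting vertex of $\gamma^{(j)}$, using the exponential decay of supercritical finite-cluster diameters on a transitive polynomial-growth graph to get $O(1)$ per subpath in expectation, and then multiplying by the deterministic bound that the number of such subpaths is at most $|B|+1$. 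Granted this, summing the lengths of the infinite-cluster subpaths (bounded above by $L$) and the detour lengths yields $\E D_p(o,x) \le \E L + O(\E|B|) + O(1) \le \E D_q(o,x) + C(p_0)|p-q|d_G(o,x) + O(1)$, which together with the easy direction closes the proof.
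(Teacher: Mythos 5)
Your proposal takes a genuinely different route from the paper, which reformulates the problem via the truncated variable $\tilde{D}_p$ (monotone and finitely-edge-dependent) and then differentiates with Russo's formula, bounding the total influence by a greedy lattice animal argument. Your monotone-coupling route is, in spirit, the ``integrated'' version of the paper's infinitesimal argument; the attraction is that it avoids introducing $\tilde{D}_p$ entirely. However, as written there is a genuine gap at exactly the place you flag.

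The crucial claim is that the expected total length of the finite-cluster subpaths of $\gamma_q$ is $O(\E|B|)+O(1)$, and your justification is: bound each such subpath by $\diam(C_p(s_j))\ind\{|C_p(s_j)|<\infty\}$, argue this has $O(1)$ expectation, and multiply by the deterministic bound $|B|+1$ on the number of subpaths. This step does not follow. First, $s_j$ is not a fixed vertex: it is selected jointly from $G_q$ (it lies on a $q$-geodesic) and from $G_p$ (it immediately follows a bad edge and lies off the infinite $p$-cluster), so $\E\bigl[\diam(C_p(s_j))\ind\{\text{finite}\}\bigr]=O(1)$ is a claim about a heavily size-biased random vertex, and is not a consequence of the unconditional exponential tail for finite supercritical clusters. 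Second, even granting a uniform $O(1)$ per-term bound, writing $\E\bigl[\sum_{j\le |B|+1} X_j\bigr]\le (\E|B|+1)\cdot\sup_j\E[X_j]$ is not valid when the random count $|B|$ and the summands are correlated, which here they plainly are (both are functions of the same pair $(G_p,G_q)$ along $\gamma_q$). A Wald-type or filtration argument would be needed, but there is no natural filtration here because the subpath decomposition depends on the entire future of $\gamma_q$. The same dependence issues recur in the detour step: Theorem~\ref{thm:main} gives tail bounds at fixed endpoints, and you need to sum these over random endpoints chosen by the decomposition; the paper handles precisely this kind of summation with the lattice-animal Lemma~\ref{lem:linear-bound}, which you would still need some version of. So the proposal identifies the right obstacle and the right intuition (short finite-cluster pieces near each bad edge), but the argument it offers to overcome it does not actually close the gap; it is possible a correct version exists along these lines, but it would require machinery comparable to what the paper develops.
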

If (as is expected, but has not yet been shown in the literature) there exists a scaling limit for the supercritical percolation cluster in this case,
then Theorem \ref{thm:cty} will show that the scaling limit is a continuous function of $p$.\footnote{To make sense of this statement,
we need a topology on the set of scaling limits. Here, all the scaling limits will be given by
metrics on a particular Lie group (diffeomorphic to $\R^n$). Therefore we can give the space of metrics the topology of 
uniform convergence on compact sets, and then Theorem \ref{thm:cty} establishes that the scaling limits are a continuous 
function of $p$. We can also consider the scaling limits as abstract metric spaces, and give them the 
topology of pointed Gromov-Hausdorff convergence, and we also get continuity with respect to this topology.}

The proof of Theorem \ref{thm:main} has the same basic outline as the proof of Antal and Pisztora. 
Our main contribution is a more elegant geometric construction (with fewer hypotheses) of an open path from $x$ to $y$
which is close to an edge-geodesic.
This construction allows us to avoid the tedious case-checking which occurs in e.g. Lemma 3.2 of \cite{AP}.
The construction is also \emph{necessary} for tackling general transitive graphs of polynomial growth;
there is no general procedure for creating a \emph{transitive} coarse-graining of such a graph,
and so any geometric argument which is to be applied to a coarse-graining must require rather weak 
geometric assumptions.
In addition, the geometric construction gives rise almost immediately
to Theorem \ref{thm:finitelypresented}, a theorem which applies to a huge class of graphs
and (to our knowledge) is new for any graph which is not quasi-isometric to $\Z^d$.

The basic outline of the proof of Theorem \ref{thm:cty} is similar to that of \cite{CNN};
some subtle differences between our approach and that of \cite{CNN} are exposited at the end of the
first subsection of Section \ref{sec:continuity}.
The observation that the lattice animal bounds of Lemma \ref{lem:linear-bound} hold
for general graphs of polynomial growth is to our knowledge new, and relies on a basic
geometric property of our coarse-graining construction which did not appear in \cite{Gorski},
where the coarse-graining construction was introduced.

\section{Proof of Theorem \ref{thm:main}}
\subsection{Outline of proof of Theorem \ref{thm:main}}
The proof of Theorem \ref{thm:main} proceeds as follows.
Quantitative uniqueness results of Contreras, Martineau, and Tassion \cite{CMT} tell us
that at large scales, the overwhelming majority of regions of the graph 
have a unique giant component (and adjacent giant components ``glue together''). Therefore, fixing a sufficiently large scale,
one should expect to be able to connect $x$ to $y$ almost directly through these ``good regions.''
To make this precise, we need to describe a procedure for coarse-graining the graph 
(following \cite{Gorski}),
show that we have uniform control (over arbitrarily large scales) of a geometric property called \emph{coarse simple connectedness},
and then use this geometric control to argue that we can indeed connect $x$ and $y$ almost directly when obstacles 
(the ``bad regions'' of the coarse graining) are sparse enough.
These latter two points are our main contributions.
In the rest of this section, we will describe the coarse graining, state some lemmata, and then prove Theorem \ref{thm:main}
modulo these lemmata.

First, let us describe the coarse-graining procedure, which is essentially the same as that used in \cite{Gorski}. For a fixed sufficiently large scale $R$, take $V^R$ to be a maximal $(R/30)$-separated
subset of $V$. (Recall that a subset $S \subset V$ is \emph{$r$-separated} if for all $x \ne y \in V$ we have $d_G(x,y) \ge r$.)
Fix a ``closest-point projection map'' $\rho: V \to V^R$ such for each $x$, $\rho(x) \in \{v \in V^R : \forall w \in V^R, d_G(v,x) \le d_G(w,x) \}$. That is, $\rho$ sends each vertex of $G$ to the nearest point in $V^R$, but can ``break ties'' arbitrarily.
For each $v \in V^R$ we define the Voronoi tile $\hat{v}$ to be $\hat{v} := \rho^{-1}(v) \subset V$, the set of vertices in $V$
closest to $v$ with ties broken by $\rho$. 
We can choose $\rho$ such that each Voronoi tile $\hat{v}$ is star-shaped about $v$, that is, if $x \in \hat{v}$,
then every edge-geodesic from $x$ to $v$ is contained in $\hat{v}$. This will happen automatically,
for instance, if ties are broken according to some total ordering on $V^R$.
We define the coarse-grained graph $\hat{G} = (\hat{V}, \hat{E})$ of $G$ by
\[
   \hat{V} := \{ \hat{v} : v \in V^R \}
\]
\[
   \hat{E} := \{ \{\hat{v}, \hat{w} \} : \exists x \in \hat{v}, y \in \hat{w} \mbox{ such that } \{x,y\} \in E \}.
\]
We define a map $\hat{\rho}: G \to \hat{G}$ from $G$ to its coarse-graining by $\hat{\rho}(v) = \hat{\rho(v)}$, i.e. each vertex of $G$
is sent to the unique Voronoi tile it belongs to (which is itself a vertex of $\hat{G}$).
Note that $\hat{\rho}$ is a distance decreasing map: each pair of adjacent points in $G$ is either sent
to a pair of adjacent points in $\hat{G}$, or both are sent to the same point of $\hat{G}$.

A key geometric property of the coarse-graining is that of \emph{coarse simple connectedness}:
\begin{defn}
   Let $0 \le \Delta < \infty$. A graph $G = (V,E)$ is called \emph{$\Delta$-simply connected}
   if for any edge cycle $C \subset E$, there exist edge cycles $C_1,...,C_N$ with each $\diam C_i \le \Delta$
   such that $C = C_1 \oplus \cdots \oplus C_N$,
   where $A \oplus B$ denotes the symmetric difference of sets $(A \setminus B) \sqcup (B \setminus A)$.
   That is, every cycle of $G$ is generated (under symmetric difference) by cycles of diameter at most $\Delta$.
   If $G$ is $\Delta$-simply connected for some $\Delta < \infty$, then we call $G$ \emph{coarsely simply connected}.
\end{defn}
See Figure \ref{fig:obstacles} for an illustration of coarse simple connectedness.
As a simple example, the standard Cayley graph of $\Z^d$ 
(i.e. the $d$-dimensional hypercubic lattice)
is $2$-simply connected, because every cycle is a symmetric difference of squares (of diameter $2$).

Note that the coarse-graining we constructed is typically not transitive and moreover
depends on the scale $R$.
However, the following proposition, proved in Section \ref{sec:coarse}, gives us \emph{uniform} control
over the geometry of the coarse-graining $\hat{G}$, independent of $R$.
\begin{prop} \label{prop:uniformoverscales}
   Let $G$ be a transitive graph of polynomial growth. Then there exist $\hat{D} < \infty$ and $\Delta < \infty$ such
   that for \emph{any} scale $R$, the associated coarse-graining $\hat{G}$ has maximum degree at most $\hat{D}$
   and is $\Delta$-simply connected.
\end{prop}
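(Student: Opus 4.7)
My plan is to handle the degree bound and the $\Delta$-simple connectedness bound separately; the first follows easily from polynomial growth, while the second invokes the fact that transitive graphs of polynomial growth are quasi-isometric to finitely presented (virtually nilpotent) groups.

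For the degree bound, I would first observe that maximality of $V^R$ as an $(R/30)$-separated set forces $\hat{v} \subset B(v, R/30)$, and that if $\hat{v}$ is $\hat{G}$-adjacent to $\hat{w}$ then $d_G(v,w) \le R/15 + 1$. Hence the $\hat{G}$-degree of $\hat{v}$ is at most $|V^R \cap B(v, R/15 + 1)|$. I would then use $(R/30)$-separation of $V^R$ to pack disjoint balls $B(u, R/60)$ into $B(v, R/15 + R/60 + 1)$, and invoke the doubling property of balls in a transitive graph of polynomial growth together with transitivity to conclude that this count is bounded independently of $R$. (For very small $R$ the bounded degree of $G$ itself trivially gives the bound.)

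For the $\Delta$-simple connectedness bound, my plan has two steps. First, I would argue that $G$ is itself $\Delta_0$-simply connected for some $\Delta_0 < \infty$: by the combined theorems of Gromov and Trofimov, $G$ is quasi-isometric to a Cayley graph of a finitely generated virtually nilpotent group, which is finitely presented, and since finite presentability is equivalent to coarse simple connectedness of a Cayley graph and is a quasi-isometry invariant, the property transfers to $G$. Second, I would transfer $\Delta_0$-simple connectedness to $\hat{G}$ by a lift-decompose-project argument: given an edge cycle $\hat{C}$ in $\hat{G}$, lift it to a cycle $C$ in $G$ by choosing a witnessing $G$-edge for each $\hat{C}$-edge and concatenating consecutive witnesses through the tile centers via edge-geodesics, which remain inside their tiles thanks to the star-shaped property of the Voronoi tiles. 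Apply $\Delta_0$-simple connectedness in $G$ to write $C = C_1 \oplus \cdots \oplus C_N$ with each $C_i$ of $G$-diameter at most $\Delta_0$, and finally push down via the $\mathbb{Z}/2\mathbb{Z}$-chain map $\hat{\rho}_*$ induced by the coarse-graining (which sends each $G$-edge either to the corresponding $\hat{G}$-edge or to zero if its endpoints share a tile). Since $\hat{\rho}_*(C) = \hat{C}$ and $\hat{\rho}$ is distance-decreasing, one obtains $\hat{C} = \hat{\rho}_*(C_1) \oplus \cdots \oplus \hat{\rho}_*(C_N)$ with each summand supported in a set of $\hat{G}$-diameter at most $\Delta_0$; a greedy decomposition of each summand into simple cycles then produces the required filling, with $\Delta := \Delta_0$ independent of $R$.

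The main obstacle I foresee is the first step of the simple connectedness argument. Because $G$ need not be a Cayley graph, one has to combine the Gromov--Trofimov structure theorem carefully with the Cayley-graph characterization of finite presentability in order to transport $\Delta_0$-simple connectedness along the resulting quasi-isometry, verifying in particular that $\Delta_0$ is finite and genuinely independent of $R$. The lift-decompose-project step, by contrast, is essentially formal once $\Delta_0$-simple connectedness of $G$ is in hand, and it is precisely there that the star-shaped property of the Voronoi tiles is needed.
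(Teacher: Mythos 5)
Your proposal is correct and follows essentially the same route as the paper: the degree bound is obtained by packing disjoint $(R/60)$-balls around nearby tile centers into a fixed multiple of a ball and invoking polynomial growth, and the $\Delta$-simple connectedness bound is obtained by quoting Trofimov/Gromov to get $\Delta_0$-simple connectedness of $G$ via quasi-isometry to a finitely presented (virtually nilpotent) group, then lifting a cycle of $\hat G$ to $G$ through tile centers (using star-shapedness), decomposing in $G$, and pushing forward by the $\F_2$-chain map $\hat\rho_*$. The only cosmetic difference is your final greedy decomposition of each $\hat\rho_*(C_i)$ into simple cycles, which tidies up the observation that a boundaryless $1$-chain of small diameter is a disjoint union of small cycles; the paper elides this harmlessly.
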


Now, given a percolation configuration on $G$, we associate to it a site percolation configuration on $\hat{G}$ as follows.
For each $v \in V^R$, define the event
\begin{equation} \label{eq:goodevent}
   A_v := 
   \left \{ B(v, \frac{R}{10}) \connectsthru{G_p} \bnd B(v,R) \right\} \cap
   \left\{ \begin{array}{c}
       \mbox{ at most one component of } G_p \cap B(v,R) \\
   \mbox{ intersects both } B(v, \frac{R}{5}) \mbox{ and } \bnd B(v, \frac{R}{2}) 
   \end{array} \right\}.
\end{equation}
We say that a site $\hat{v} \in \hat{V}$ is \emph{open} if $A_v$ holds and \emph{closed} otherwise.

By Proposition \ref{prop:separate} below,
there exists some $K' < \infty$ (independent of $R$) such that if $d_{\hat{G}}(\hat{v}, \hat{w}) \ge K'$, then $B_G(v,R)$ and $B_G(w,R)$
are disjoint and hence
$A_v$ and $A_w$ are independent. Moreover, by \cite{CMT} we have
the following. 
\begin{prop}[Proposition 1.3 of \cite{CMT}] \label{prop:quantunique}
   Let $p_0 > p_c$. Then there exists $R_0 < \infty$ such that for all $R \ge R_0$, we have
   \[
      \Prob_p( A_v) \ge 1 - e^{- \sqrt{R}}
   \]
   for all $p \in [p_0,1]$.
\end{prop}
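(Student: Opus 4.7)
Since this proposition is cited from Contreras--Martineau--Tassion \cite{CMT}, the plan is to sketch how one would reproduce their argument. Write $A_v = E_1 \cap E_2$ where
\[
   E_1 = \left\{ B(v, R/10) \connectsthru{G_p} \bnd B(v, R) \right\}, \quad E_2 = \left\{ \text{at most one component of } G_p \cap B(v,R) \text{ meets both } B(v, R/5) \text{ and } \bnd B(v, R/2) \right\}.
\]
I would bound $\Prob_p(E_1^c)$ and $\Prob_p(E_2^c)$ separately and take a union bound. The contribution from $E_1$ will turn out to be exponentially small in $R$; the dominant term $e^{-c\sqrt{R}}$ will come from $E_2$.

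For $E_1$, the inputs are: (i) sharpness of the phase transition for transitive graphs of polynomial growth (via Duminil-Copin--Raoufi--Tassion and Hutchcroft), which yields for each $p_0 > p_c$ a uniform constant $c > 0$ with $\Prob_p(r \le |C(o)| < \infty) \le e^{-cr}$ for $p \in [p_0,1]$; and (ii) positivity and monotonicity of the percolation probability $\theta(p) \ge \theta(p_0) > 0$. A first moment argument over vertices of $B(v, R/10)$, combined with polynomial volume growth, shows that some vertex of $B(v,R/10)$ lies in an infinite cluster with probability at least $1 - (1-\theta(p_0))^{|B(v,R/10)|} \ge 1 - e^{-c' R}$, and any such cluster must exit $B(v,R)$.

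For $E_2$, the content is quantitative uniqueness of the giant cluster in a ball, which is the genuinely hard part of \cite{CMT}. The approach is a sprinkling/renormalization argument: split $p = p' + \delta$ with $p' > p_c$, and view $G_p$ as $G_{p'}$ enlarged by an independent sprinkling. Suppose two disjoint components $C_1, C_2 \subset G_p \cap B(v,R)$ each intersect $B(v, R/5)$ and $\bnd B(v, R/2)$, so each has diameter at least $3R/10$. Using the quasi-Euclidean large-scale geometry of transitive graphs of polynomial growth (via Breuillard--Green--Tao) together with the uniform coarse simple connectedness of Proposition~\ref{prop:uniformoverscales}, one partitions $B(v,R)$ into $\Theta(\sqrt{R})$ pairwise disjoint ``corridor'' regions of thickness $\sqrt{R}$, each intersected by both $C_1$ and $C_2$. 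Within each corridor, finite-energy surgery applied to the sprinkling, combined with the existence of a spanning cluster in the $p'$-configuration, produces a merging path with probability at least some $\eta(p_0) > 0$. Disjointness of corridors gives independence, so the probability that no merging occurs is at most $(1-\eta)^{\sqrt{R}} \le e^{-c_2 \sqrt{R}}$.

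The main obstacle is clearly $E_2$: the qualitative Burton--Keane uniqueness argument produces no quantitative rate, and obtaining the stretched exponential $e^{-\sqrt{R}}$ requires genuinely building $\Theta(\sqrt{R})$ disjoint corridors on which independent sprinkling merges can be executed. In the hypercubic setting this corridor construction is essentially combinatorial; in the present generality, it is where the Breuillard--Green--Tao scaling limits and the coarse geometric control of Proposition~\ref{prop:uniformoverscales} do the real work. A secondary technical issue is uniformity of all constants over $p \in [p_0,1]$, which ultimately reduces to uniformity in the sharpness estimate for the truncated two-point function.
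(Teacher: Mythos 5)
The paper does not prove this proposition: it imports it as a black box from \cite{CMT}, as the bracketed attribution ``[Proposition 1.3 of \cite{CMT}]'' makes explicit. The only content the paper contributes is the remark immediately following the statement, which verifies (by inspecting the proof of Proposition~3.1 in \cite{CMT}) that the scale threshold $R_0$ can be chosen uniformly over $p\in[p_0,1]$, resolving a potential ambiguity in CMT's wording. So there is no in-paper proof to compare against, and a full reconstruction of CMT's argument was neither attempted nor required by the authors.

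Your sketch is a plausible high-level outline of a CMT-style argument (sharpness via Duminil-Copin--Raoufi--Tassion and Hutchcroft for the connectivity event $E_1$, renormalization and sprinkling for the uniqueness event $E_2$), and your closing observation that the key subtlety is uniformity of constants over $p\in[p_0,1]$ is precisely the point the paper's remark addresses. A few of the specifics should be flagged, however. You invoke the present paper's Proposition~\ref{prop:uniformoverscales} as an ingredient of CMT's proof; this is circular, since that proposition is proved in this paper (for the present paper's coarse-graining) and postdates \cite{CMT}, so it cannot be an input to their argument. The Breuillard--Green--Tao input and the ``$\Theta(\sqrt{R})$ disjoint corridors of thickness $\sqrt{R}$'' decomposition are similarly speculative reconstructions, presented with more confidence than the evidence supports. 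Since the paper treats this result as a citation, the right move in a proof write-up is to cite \cite{CMT} directly and verify the $p$-uniformity (as the authors' remark does), rather than to re-derive the renormalization from scratch.
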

\begin{rmk}
   The wording in \cite{CMT} may leave ambiguous whether one $R_0$ will work for \emph{all} $p \in [p_0,1]$
   simultaneously; an analysis of the proof of Proposition 3.1 of \cite{CMT} reveals that indeed the choice of $R_0$
   does not depend on $p$, only on $p_0$. (In the notation of \cite{CMT}, the required lower bound
   on $n$ only depends on $p$, not on $q$.)
\end{rmk}
Thus our site percolation on $\hat{G}$ has bounded dependence and retention probability tending to 1,
allowing us to use the method of domination by product measures \cite{LSS}.

Now, take $x, y \in V$. Define the \emph{closed $\Delta$-cluster} of a point $\hat{v} \in \hat{G}$
to be empty if $\hat{v}$ is open and otherwise equal to the connected component of $\hat{v}$
in the graph whose vertices are the closed sites in $\hat{G}$ and whose edges
connect every pair of closed sites in $\hat{G}$ which have $d_{\hat{G}}$-distance
at most $\Delta$.
Now we take $F$ to be the union of the closed clusters of all the sites of $\hat{G}$
intersecting 
\[
   I_{x,y} := B_{\hat{G}}(\hat{\rho}(x), K') \cup B_{\hat{G}}(\hat{\rho}(y), K') \cup [\hat{\rho}(x), \hat{\rho}(y)].
\]
Again $K'$ is as given in Proposition \ref{prop:separate} below, and $[\hat{\rho}(x), \hat{\rho}(y)]$
is a shortest path from $\hat{\rho}(x)$ to $\hat{\rho}(y)$ in $\hat{G}$ chosen by some deterministic rule. 

Throughout the paper, we will denote by $N(S,r)$ the $r$-neighborhood of the set $S$, that is, $N(S,r) = \bigcup_{s \in S} B(s,r)$.
To prove Theorem \ref{thm:main}, the key geometric argument will be 
\begin{lemma} \label{lem:geolem}
   Suppose that $x \connectsthru{G_p} y$. Then there exists an open path in $G$ from $x$ to $y$ which lies in
   \[
      \bigcup \{ B(v,R) : \hat{v} \in I_{x,y} \cup N(F, \Delta) \}. 
   \]
\end{lemma}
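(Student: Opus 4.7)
The plan is to start from any open path $\pi$ in $G_p$ from $x$ to $y$ (one exists by hypothesis), form its coarse projection $\hat{\pi} := \hat{\rho}(\pi)$, and compare $\hat{\pi}$ to the deterministic geodesic $\gamma := [\hat{\rho}(x),\hat{\rho}(y)] \subset I_{x,y}$ in $\hat{G}$. By Proposition~\ref{prop:uniformoverscales}, $\hat{G}$ is $\Delta$-simply connected, so the edge-cycle $C := \hat{\pi} \oplus \gamma$ decomposes as a symmetric difference $C_1 \oplus \cdots \oplus C_N$ of edge-cycles of diameter at most $\Delta$. I would then modify $\pi$ by a sequence of local surgeries, one per $C_i$, each pushing the projection of $\pi$ closer to the allowed region $I_{x,y} \cup N(F, \Delta)$.

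For each small cycle $C_i$ I would distinguish two cases. If $C_i$ meets $F$, then $\diam C_i \le \Delta$ forces $C_i \subset N(F, \Delta)$, so the corresponding portion of $\pi$ already projects into the allowed region and needs no change. If $C_i$ is disjoint from $F$, then its $\gamma$-vertices, which all lie in $I_{x,y}$, must be open (a closed $\gamma$-vertex on $C_i$ would automatically be in $F$). I would then use the crossing and uniqueness clauses of $A_v$ along the $\gamma$-side of $C_i$, together with Proposition~\ref{prop:separate} (which forces $w \in B(v, R/2)$ for $\hat{v}, \hat{w}$ adjacent in $\hat{G}$), to glue open crossings of consecutive balls $B(v, R)$ together and thereby construct an open $G_p$-path spanning the $\gamma$-portion of $C_i$ inside $\bigcup \{B(v, R) : \hat{v} \in C_i \cap \gamma\} \subset I_{x,y}$. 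This new subpath is spliced into $\pi$ in place of the chunk projecting onto the $\hat{\pi}$-portion of $C_i$. After processing all $C_i$, the resulting open $G_p$-path from $x$ to $y$ projects entirely into $I_{x,y} \cup N(F, \Delta)$, as desired.

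The main obstacle I anticipate is making this iterative surgery precise. First, $\hat{\pi}$ is only a walk in $\hat{G}$ and need not be embedded, so ``the chunk of $\pi$ projecting onto the $\hat{\pi}$-portion of $C_i$'' must be defined unambiguously, and the $C_i$ must be processed in an order compatible with this bookkeeping. Second, for a clean $C_i$, the $\hat{\pi}$-vertices on $C_i$ may themselves be closed (disjointness from $F$ only says the closed $\Delta$-clusters of such vertices do not reach $I_{x,y}$), so the surgery must succeed using only the $\gamma$-side of $C_i$, with the $\hat{\pi}$-side contributing nothing beyond the splicing endpoints. Finally, at those splicing endpoints one must verify that the newly built open crossing attaches to the surviving portions of $\pi$, which again reduces to the gluing property for adjacent good balls supplied by Proposition~\ref{prop:separate} and the uniqueness clause of $A_v$.
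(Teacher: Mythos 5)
Your plan has a genuine gap at its core. The decomposition $\hat\pi \oplus \gamma = C_1 \oplus \cdots \oplus C_N$ produced by $\Delta$-simple connectedness is a \emph{formal} identity of $\F_2$-chains, not a sequence of elementary homotopy moves. The individual $C_i$ are arbitrary cycles of small diameter: a given $C_i$ need not contain any edge of $\hat\pi$ or of $\gamma$ at all, several $C_i$ may share edges that cancel, and there is no canonical decomposition of $C_i$ into a ``$\hat\pi$-side'' and a ``$\gamma$-side''. Consequently ``the chunk of $\pi$ projecting onto the $\hat\pi$-portion of $C_i$'' is not a well-defined object, and there is no ordering of the $C_i$ in which one can perform local surgeries one cycle at a time. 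You flag this as an ``obstacle,'' but it is not a bookkeeping issue that an ordering can fix --- it is the reason the per-$C_i$ strategy does not get off the ground. Relatedly, your case analysis (``$C_i$ meets $F$'' vs. ``$C_i$ disjoint from $F$'') is keyed to the cycles rather than to $\hat\pi$; a $C_i$ disjoint from $F$ does not tell you that the segment of $\hat\pi$ you wish to replace avoids $F$, nor does $C_i$ meeting $F$ tell you that the corresponding part of $\pi$ already lies in the allowed region, since $C_i$ may not contain any $\hat\pi$-edges.

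The paper uses the small-cycle decomposition for something weaker and more robust: given a \emph{single} pair of paths $\beta, \gamma$ with the same endpoints where $\gamma$ avoids $F$, it produces (Lemma~\ref{lem:obstacles}) \emph{one} new path $\gamma'$ in $(\beta \cup N(F,\Delta))\setminus F$, by taking $\gamma'' = \beta \oplus \bigoplus_{c \cap F \neq \emptyset} c$ and extracting the component containing the endpoints. No claim is made that each $c$ corresponds to a local move. Crucially, the paper does \emph{not} apply this to the whole of $\hat\pi$ (which may enter $F$) against the whole of $[\hat\rho(x),\hat\rho(y)]$. Instead, the iteration is over ``bad chunks'' of the \emph{microscopic} path: one takes a maximal connected subpath $\gamma$ of $\pi$ lying outside the target region, locates its $\hat\rho$-image inside $\hat\pi$, and extends to a subpath $\hat\gamma$ of $\hat\pi$ running between consecutive visits of $\hat\pi$ to $F \cup B_{\hat G}(\hat\rho(x),K') \cup B_{\hat G}(\hat\rho(y),K')$; by construction $\hat\gamma$ avoids $F$ and its endpoints $\hat u, \hat v$ are open and far from $\hat\rho(x),\hat\rho(y)$. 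One then separately builds the reference path $\hat\beta$ from $\hat u$ to $\hat v$ through $I_{x,y}\cup N(F,\Delta)$, applies Lemma~\ref{lem:obstacles} to $\hat\beta$ and $\hat\gamma$ to get an open macroscopic bypass $\hat\gamma'$, and applies the surgery (Proposition~\ref{prop:surgery}) once to replace the entire bad chunk of $\pi$. Your proposal is missing all three ingredients that make this work: the correct unit of iteration (bad chunks of $\pi$, not cycles $C_i$), the preliminary trimming of $\hat\pi$ to a segment $\hat\gamma$ avoiding $F$ with open endpoints (needed both for Lemma~\ref{lem:obstacles}'s hypothesis and for the gluing), and the buffer distance $K'$ from $\hat\rho(x),\hat\rho(y)$ that Proposition~\ref{prop:surgery} requires so that $\pi$ actually lands in the giant components $\kappa(u),\kappa(v)$ at the splicing endpoints.
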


The key probabilistic argument will be
\begin{lemma} \label{lem:problem}
   Given $p > p_c$, for all $R$ sufficiently large, there exists $K$ such that for all $x, y \in V$, 
   there exists $C, c > 0$ such that for all $t \ge d_G(x,y)$ we have
   \[
      \Prob_p( |F| > K t ) \le C \exp( -c t ).
   \]
\end{lemma}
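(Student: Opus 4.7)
The plan is to reduce to an iid site percolation via Liggett--Schonmann--Stacey \cite{LSS} stochastic domination and then bound $|F|$ by a standard lattice animal / Peierls count.

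The setup for the reduction is already in place. By Proposition \ref{prop:separate} the closed-site process on $\hat{G}$ is $K'$-dependent, and by Proposition \ref{prop:quantunique} each site is closed with probability at most $e^{-\sqrt{R}}$ uniformly in $p \in [p_0,1]$. By \cite{LSS} the closed-site configuration is therefore stochastically dominated by an iid Bernoulli$(q(R))$ field with $q(R) \to 0$ as $R \to \infty$, uniformly in $p \in [p_0,1]$. I will prove the required estimate in this dominating iid model; a coupling transfers it back.

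Introduce the auxiliary graph $\hat{G}^\Delta$ on $\hat{V}$ whose edges join pairs at $d_{\hat{G}}$-distance at most $\Delta$, so that closed $\Delta$-clusters are precisely the closed connected components of $\hat{G}^\Delta$. By Proposition \ref{prop:uniformoverscales}, $\hat{G}^\Delta$ has degree at most $D^* := \hat{D}^\Delta$, and the usual counting argument bounds the number of size-$s$ connected subgraphs of $\hat{G}^\Delta$ containing any fixed vertex by $(eD^*)^s$. Since $\hat{\rho}$ is distance-decreasing, we have $m := |I_{x,y}| \le C_1 + t$ whenever $t \ge d_G(x,y)$. Now $F$ decomposes as a disjoint union of $N \le m$ such connected components, each meeting $I_{x,y}$. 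Enumerating over $N$, over $N$-tuples of distinct representatives chosen from $I_{x,y}$, over ordered compositions $|F| = s_1 + \cdots + s_N$, and over one lattice animal of the appropriate size per representative, and paying $q(R)^{|F|}$ for the resulting all-closed event, yields a bound of the form
\[
   \Prob_p\bigl(|F| \ge n\bigr) \;\le\; C_2 \cdot 2^{m} \bigl(2 e D^* q(R)\bigr)^n.
\]
Choosing $R$ large enough that $2eD^* q(R)$ is very small, then $K$ large enough that $K \log\bigl(1/(2eD^* q(R))\bigr)$ beats the $\log 2$ coming from the $2^m$ prefactor, produces $\Prob_p(|F| \ge Kt) \le C e^{-ct}$, uniformly in $p \in [p_0,1]$.

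The main delicate point is this calibration: the LSS-dominating parameter $q(R)$ must be driven sufficiently small (by taking $R$ large enough, depending on $p_0$) that the geometric decay in $n$ of the lattice animal sum beats the combinatorial $2^m$ prefactor, itself exponential in $t$. That this is achievable follows from Proposition \ref{prop:quantunique}, which gives $q(R) \to 0$ as $R \to \infty$, uniformly in $p \in [p_0,1]$.
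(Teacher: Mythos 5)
Your proposal shares the paper's first step — reduce to iid site percolation on $\hat{G}$ via Liggett--Schonmann--Stacey, using the bounded dependence range $K'$ from Proposition~\ref{prop:separate} and the high retention probability from Proposition~\ref{prop:quantunique} — and is correct, but it diverges from the paper at the combinatorial/probabilistic step, and the divergence is worth noting. The paper (Proposition~\ref{prop:indepbound}) introduces \emph{preclusters} $\tilde{C}(\hat{v})$: independent copies of the closed $\Delta$-cluster law, one per site of $I_{x,y}$, and asserts that $|F_\rho|$ is stochastically dominated by $\sum_{\hat{v}\in I_{x,y}}|\tilde{C}(\hat{v})|$ (an Antal--Pisztora-style sequential-exploration domination). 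It then bounds the tail of each precluster size by a lattice-animal count and closes with a Chernoff/exponential-moment estimate, absorbing the $Q^{|I_{x,y}|}$ prefactor (exponential in $t$, since $|I_{x,y}|\le 2\hat{D}^{K'}+t$) by choosing $K$ large. Your route skips the precluster domination entirely and runs a direct Peierls union bound: enumerate over the number $N$ of distinct closed $\Delta$-clusters meeting $I_{x,y}$, over an $N$-tuple of distinct representatives in $I_{x,y}$ (contributing the $2^m$ factor), over compositions of $|F|$ into sizes, and over lattice animals, paying $q(R)^{|F|}$ for the closed event. The trade-off is that you avoid the sequential-exploration argument needed to justify the precluster stochastic domination, at the cost of slightly more bookkeeping in the enumeration; both approaches produce a bound of the form $(\text{const})\cdot e^{at}\cdot e^{-bKt}$ with $a$ fixed and $b$ made large by taking $R$ large, so that a suitable $K$ closes the argument. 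One small point you should make explicit in a full write-up: in the enumeration you may restrict to \emph{pairwise disjoint} lattice animals (since genuine component decompositions are disjoint), which is needed for the closed-event probability to be exactly $q^{|F|}$ rather than something larger.
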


Given these two lemmata, the theorem quickly follows:
\begin{proof}[Proof of Theorem \ref{thm:main} given Lemmas \ref{lem:geolem} and \ref{lem:problem}]
   Let $p \ge p_0 > p_c$. Note that since $G$ is of polynomial growth, it is amenable, and so
   by the Burton-Keane theorem \cite{burton1989density, haggstrom2006uniqueness} there is exactly one infinite connected component of
   $G_p$, and so by the Harris-FKG inequality \cite[Theorem 2.4]{grimmett1999percolation}
   $\Prob_p(x \connects y) \ge \Prob_p(x \connects \infty) \Prob_p(y \connects \infty) = \Prob_p(o \connects \infty)^2 > 0$.
   That is, the connectivity probability is bounded below by a constant independent of $x$ and $y$.
   Thus, to prove that $\Prob_p( d_{G_p}(x,y) \ge K t | x \connectsthru{G_p} y )$ decays
   exponentially in $t$ (for $t \ge d_G(x,y)$), it suffices to show that $\Prob_p( d_{G_p}(x,y) \ge K t , x \connectsthru{G_p} y )$
   decays exponentially in $t$.
   
   So using Lemma \ref{lem:problem}, fix $R < \infty$ and $\kappa$ sufficiently large such that $\Prob_p( |F| > \kappa t )$ decays exponentially 
   in $t$ for $t \ge d_G(x,y)$. 
   If $D$ is the maximum degree of $G$ and $\hat{D}$ is the maximum degree of $\hat{G}$,
   then on the event $\{x \connectsthru{G_p} y\}$, Lemma \ref{lem:geolem} gives us the very crude bound
   \[
      d_{G_p}(x,y) \le M( 2\hat{D}^{K'} + d_G(x,y) + \hat{D}^{\Delta} |F| )
   \]
   where $M := |B_G(R)| \le D^R$.
   In particular, on the (exponentially likely in $t$) event that $|F| \le \kappa t$, this gives that
   \[
      d_{G_p}(x,y) \le M( 2\hat{D}^{K'} + d_G(x,y) + \hat{D}^{\Delta} \kappa t ) \le K t
   \]
   where the last inequality will hold whenever $d_G(x,y) \ge 2\hat{D}^{K'}$ if we take
   $K = M(2 + \hat{D}^{\Delta} \kappa)$. That is,
   \[
      \Prob_p( x \connectsthru{G_p} y, d_{G_p}(x,y) > K t ) \le 
      \Prob_p( |F| > K t ) \le C \exp(-c t )
   \]
   for all $t \ge d_G(x,y)$ whenever $d_G(x,y)$ is sufficiently large; and so we are done.
\end{proof}

\subsection{Geometric properties of the coarse-graining $\hat{G}$} \label{sec:coarse}
Here we prove some geometric properties of the coarse-graining $\hat{G}$ which are needed for the
proofs of Lemmata \ref{lem:geolem} and \ref{lem:problem}.
We begin with Proposition \ref{prop:uniformoverscales},
which controls the maximum degree and the coarse simple connectedness parameter for $\hat{G}$
uniformly as $R \to \infty$.


\begin{proof}[Proof of Proposition \ref{prop:uniformoverscales}]
   First we bound the maximum degree of $\hat{G}$.
   Maximum degree bounds were already proven in \cite{Gorski} but we reiterate
   the argument here. First, note that because $V^R$ is $R/30$-separated, for every $\hat{v} \in \hat{V}$
   we have
   \[
      B_G(v, R/60 - 1) \subset \hat{v} \subset B_G(v, R/30).
   \]
   If $\hat{v}$ is adjacent to $\hat{w}$, the above shows that there is an edge
   connecting $B_G(v,R/30)$ to $B_G(w,R/30)$, so we then have that $d_G(v,w) \le (R/15) + 1$.
   Therefore we have the inclusion of disjoint unions
   \[
      \bigsqcup_{\hat{w} :  d_{\hat{G}}(\hat{v}, \hat{w}) \le 1} B_G(w, R/60 - 1) \subset
      \left(\bigsqcup_{\hat{w} :  d_{\hat{G}}(\hat{v}, \hat{w}) \le 1} \hat{w} \right)
      \subset B_G(v, R/10),
   \]
   so comparing cardinalities (and using transitivity) gives the bound
   \[
      \mathrm{deg} \hat{v} + 1 \le \frac{ |B_G(o, R/10)| }{ |B_G(o, R/60 - 1) }.
   \]
   Since $G$ is transitive of polynomial growth, by Trofimov's theorem \cite{Trofimov}
   it is quasi-isometric to a Cayley graph of a finitely generated nilpotent group,
   and therefore the function $R \mapsto |B_G(o,R)|$ is bounded above and below
   by polynomials in $R$ of the same degree. Therefore the fraction above is bounded
   by a constant $\hat{D}$ independent of $R$, and we have proved the desired degree bound.
   
   Now for coarse simple connectedness.
   Since finitely generated nilpotent groups are finitely presented (see e.g. Proposition 13.75 and 13.84 in \cite{GGTDrutuKapovich}), they are coarsely simply connected
   (see e.g. Corollary 9.5 of \cite{GGTDrutuKapovich}).
   Since coarse simple connectedness is a quasi-isometry invariant of graphs (see e.g. \cite{GGTDrutuKapovich} Corollary 9.36),
   again by Trofimov's theorem \cite{Trofimov}
   $G$ is $\Delta$-simply connected for some $\Delta < \infty$. 
   We now show that each coarse-graining $\hat{G}$ at every scale $R$ is \emph{also} $\Delta$-simply connected.\footnote{
   Each coarse-graining $\hat{G}$ is quasi-isometric to $G$, and hence
   we know that it is $\Delta'$-simply connected for \emph{some} $\Delta'$ which a priori depends on $R$.
   However, for our purposes we need uniform control of the parameter $\Delta'$ as $R$ increases,
   so we cannot just appeal to quasi-isometry.}
   
   To do this, it will be helpful to use the language of homology.
   Recall that a \emph{1-chain in } $G$ \emph{ with coefficients in } $\F_2$ is a finite formal linear combination
   of edges of $G$ with coefficients in $\F_2 = \Z / 2\Z = \{ \bar{0}, \bar{1} \}$.
   The collection of all $1$-chains in $G$ is denoted by $C_1(G, \F_2)$, and this can 
   naturally be identified with the collection of all finite subsets of $E$.
   Under this identification, the natural addition operation on $C_1(G, \F_2)$ corresponds
   to symmetric difference $\oplus$ of sets.
   We can similarly define $C_0(G, \F_2)$ to be the collection of all finite linear combinations of \emph{sites}
   of $G$ (or equivalently, all finite subsets of $V$).
   We can then define the boundary operator $\partial: C_1(G, \F_2) \to C_0(G, \F_2)$
   to be the unique $\F_2$-linear map that sends each edge to its endpoints.
   Note that this corresponds to the usual notion of boundary;
   for instance, if $\pi \in C_1(G, \F_2)$ is a path of edges from $x$ to $y$, then due to cancellation, $\partial \pi = x \oplus y$.
   Crucially, each $c \in C_1(G, \F_2)$ satisfies $\partial c = 0$ if and only
   if $c$ is a (possibly empty) finite union of cycles in the usual graph-theoretic sense.
   
   Now consider a cycle $\hat{q}$ in $\hat{G}$. We want to first find a ``corresponding'' cycle $q$ in $G$.
   To see what we mean by this, note that the map $\hat{\rho}:V \to \hat{V}$ defined by $\hat{\rho}(x) = \hat{\rho(x)}$
   (where $\rho:V \to V^R$ is the nearest point projection we used to define the coarse-graining)
   has the property that if $\{v,w\} \in E$, either $\hat{\rho}(v) = \hat{\rho}(w)$, or $\{ \hat{\rho}(v), \hat{\rho}(w) \} \in \hat{E}$.
   Thus, $\hat{\rho}$ induces a $\F_2$-linear map of 1-chains $\hat{\rho} : C_1(G, \F_2) \to C_1(\hat{G}, \F_2)$,
   where we take $\hat{\rho}(\{v,w\}) := 0$ if $\hat{\rho}(v) = \hat{\rho}(w)$ and
   $\hat{\rho}(\{v,w\}) = \{\hat{\rho}(v), \hat{\rho}(w) \}$ otherwise.
   Note that this is a chain map, that is, it preserves the boundary operator in the sense
   that $\bnd \hat{\rho}(c) = \hat{\rho} (\bnd(c))$.
   
   Now, given a cycle $\hat{q}$ in $\hat{G}$, we want to find a cycle $q$ in $G$ which is ``corresponding''
   in the sense that $\hat{\rho}(c) = \hat{c}$.
   To do this, first consider the chain $\hat{q}$ as a sequence of vertices $\hat{v}_0 ,\hat{v}_1,..., \hat{v}_N = \hat{v}_0$
   in $\hat{V}$ with the property that each $\{ \hat{v}_i, \hat{v}_{i+1} \} \in \hat{E}$.
   By definition of $\hat{E}$, for each $i = 0,..., N-1$ there exists $e_i = \{b_i, a_{i+1} \} \in E$
   such that $b_i \in \hat{v}_i$ and $a_{i+1} \in \hat{v}_{i+1}$. Then we can take $q$ to be the concatenation of edge paths
   \[
      q = [v_0, b_0] \oplus e_0 \oplus [a_1, v_1] \oplus [v_1, b_1] \oplus \cdots \oplus e_{N-1} \oplus [a_N, v_N],
   \]
   where each $[\xi, \eta]$ is some edge geodesic from $\xi$ to $\eta$ in $G$.
   By star-convexity of all the Voronoi tiles, each $[a_i, v_i] \oplus [v_i, b_i]$ is contained entirely in $\hat{v}$,
   meaning that $\hat{\rho}([a_i, v_i] \oplus [v_i, b_i]) = 0$. So we see that
   \[
      \hat{\rho}(q) = \hat{\rho}(e_0) \oplus \hat{\rho}(e_1) \oplus \cdots \oplus \hat{\rho}(e_{N-1}) = \hat{q},
   \]
   as desired.
   $q$ is a cycle in $G$, and so since $G$ is $\Delta$-simply connected, there exists a family 
   $\mathcal{C}$ of cycles of diameter at most $\Delta$ such that $q = \bigoplus_{ c \in \mathcal{C}} c$.
   But then we have (by linearity)
   \[
      \hat{q} = \hat{\rho}(q) = \bigoplus_{c \in \mathcal{C}} \hat{\rho}(c),
   \]
   so it only remains to show that each $\hat{\rho}(c)$ is either $0$ or a cycle in $\hat{G}$ of diameter at most $\Delta$.
   But since $\hat{\rho}$ is a chain map, for each cycle $c \in \mathcal{C}$, $\bnd \hat{\rho}(c) = \hat{\rho}( \bnd c ) = 0$,
   and so $\hat{\rho}(c)$ is indeed a cycle. To see the diameter bound, simply note that $\hat{\rho}: V \to \hat{V}$
   is a distance-decreasing map.
\end{proof}

We also will need the following proposition:
\begin{prop} \label{prop:separate}
   There exists $K' < \infty$ independent of $R$ such that for any sufficiently large scale $R$, we have that
   for all $\hat{v}, \hat{w} \in \hat{G}$,
   \[
      d_{\hat{G}}(\hat{v}, \hat{w}) \ge K' \Rightarrow B_G(v,R) \cap B_G(w,R) = \emptyset.
   \]
\end{prop}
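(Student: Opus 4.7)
The plan is to prove the contrapositive: assuming $B_G(v,R) \cap B_G(w,R) \neq \emptyset$, we would bound $d_{\hat{G}}(\hat{v},\hat{w})$ by a constant independent of $R$. If the two balls meet then the triangle inequality gives $d_G(v,w) \le 2R$, so we fix an edge-geodesic $\gamma = (v = x_0, x_1, \ldots, x_L = w)$ in $G$ of length $L \le 2R$, so that every vertex of $\gamma$ lies in $B_G(v, 2R)$.

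Next, we apply the coarse-graining map $\hat{\rho}$ pointwise along $\gamma$ to obtain the sequence of tiles $\hat{\rho}(x_0), \ldots, \hat{\rho}(x_L)$. By the definition of $\hat{E}$, any two consecutive terms are either equal (when $\{x_i, x_{i+1}\}$ lies in a single Voronoi tile) or adjacent in $\hat{G}$. Collapsing runs of equal terms produces a walk in $\hat{G}$ from $\hat{v}$ to $\hat{w}$, and in particular a path whose length is bounded by the number of \emph{distinct} tiles $\hat{u}$ that meet $\gamma$. It is therefore enough to control that tile count by a constant independent of $R$.

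The main step is a volume comparison, in direct analogy with the degree bound in Proposition \ref{prop:uniformoverscales}. If a tile $\hat{u}$ meets $\gamma$, it contains some $x_i \in B_G(v,2R)$; since $\hat{u} \subset B_G(u, R/30)$, the center $u$ must lie in $B_G(v, 2R + R/30)$. Because $V^R$ is $R/30$-separated, the balls $\{B_G(u, R/60 - 1)\}_{u \in V^R}$ are pairwise disjoint, and by transitivity
\[
   \#\bigl(V^R \cap B_G(v, 2R + R/30)\bigr) \cdot |B_G(o, R/60 - 1)| \;\le\; |B_G(o, 2R + R/30 + R/60 - 1)|.
\]
Invoking Trofimov's theorem as in Proposition \ref{prop:uniformoverscales}, the function $r \mapsto |B_G(o,r)|$ is bounded above and below by polynomials of the same degree, so this ratio is bounded, for all sufficiently large $R$, by a constant $K'$ depending only on $G$.

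We expect the main obstacle to be precisely this last step: the bound must be \emph{uniform} in $R$, since otherwise $K'$ would grow with the scale and the resulting disjointness (and hence the independence statement it is used for) would be useless in the probabilistic arguments that follow. Polynomial growth together with transitivity is exactly what makes the relevant volume ratio scale-invariant, and no further ingredients beyond those already used in Proposition \ref{prop:uniformoverscales} are needed.
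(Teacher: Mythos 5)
Your proof is correct and follows essentially the same route as the paper's: take the image under $\hat{\rho}$ of a geodesic from $v$ to $w$, observe it gives a walk in $\hat{G}$, and bound its length by a volume-comparison argument powered by Trofimov's theorem. The only real difference is one of packaging: the paper proves the slightly stronger uniform Lipschitz bound $d_{\hat{G}}(\hat{v},\hat{w}) \le \tfrac{40\hat{D}}{R}\, d_G(v,w)$ (covering the geodesic by $O(d_G(v,w)/R)$ balls of radius $R/10$, each containing at most $\hat{D}$ tiles) and then sets $K' := 3C$, whereas you count tiles meeting the geodesic directly via disjointness of the $B_G(u, R/60-1)$; this is fine for the proposition itself, but the more general Lipschitz form is what is actually invoked again later in the proof of Lemma~\ref{lem:greedy}, where the bound $|\hat{\rho}(\pi)| \le (c/R)|\pi|$ is needed for arbitrary paths $\pi$, not just geodesics between nearby tiles.
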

\begin{proof}
   $B_G(v,R) \cap B_G(w,R) = \emptyset$ is equivalent to $d_G(v,w) \ge 2R + 1$.
   Therefore to show the proposition it will suffice to show that there exists $C < \infty$ such that
   $d_{\hat{G}}(\hat{v}, \hat{w}) \le \frac{C}{R} d_G(v,w)$ for all $\hat{v} \ne \hat{w} \in \hat{G}$, 
   since then we can just take $K' := 3C$.
   
   To see this bound, let $\hat{v}, \hat{w} \in \hat{G}$ and consider a geodesic $\pi$
   in $G$ from $v$ to $w$. $\pi$ can be covered by at most $(10d_G(v,w)/R + 1)$
   balls of radius $R/10$, and each of these balls contains at most $\hat{D}$
   distinct tiles $\hat{x}$, by the same argument as in the proof of Proposition \ref{prop:uniformoverscales}.
   Therefore $\hat{\rho}(\pi)$ is a path in $\hat{G}$  consisting of at most
   $[(10 d_G(v,w)/R) + 1]\hat{D}$ sites, and so (using the fact that $\hat{v} \ne \hat{w}$ implies that $d_G(v,w) \ge R/30$
   and hence $1 \le 30 d_G(v,w)/R$)
   we have
   \[
      d_{\hat{G}}(\hat{v}, \hat{w}) \le \frac{40 \hat{D}}{R} d_G(v,w),
   \]
   which is the desired bound.
\end{proof}

\subsection{Proof of Lemma \ref{lem:geolem}}

In this section we prove Lemma \ref{lem:geolem}.
First, let us establish some facts about the relationship between the ``microscopic'' percolation process $G_p$
and the induced ``macroscopic'' percolation process on $\hat{G}$. Recall that 
we define a vertex $\hat{v} \in \hat{G}$ to be open if the event $A_v$ defined in \eqref{eq:goodevent} holds,
and closed otherwise.

First, note that the event $A_v$ implies that there is exactly one connected component of $G_p \cap B(v,R)$
intersecting both $B(v,R/5)$ and $\bnd B(v, R/2)$. We refer to this component as ``the giant component near $v$''
and denote it by $\kappa(v)$.
\begin{prop} \label{prop:macrotomicro}
   Suppose that $\hat{v}$ and $\hat{w}$ are endpoints of an open path $\hat{\gamma}$
   in $\hat{G}$. Then for any
   $\xi \in \kappa(v), \zeta \in \kappa(w)$, there exists an open path in $G_p \cap \bigcup_{\hat{u} \in \hat{\gamma}} B(u,R)$
   which connects $\xi$ to $\zeta$.
\end{prop}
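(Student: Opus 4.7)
The plan is to proceed by induction on the length $N$ of $\hat{\gamma}$, reducing everything to the one-edge gluing statement: if $\hat{v}_i$ and $\hat{v}_{i+1}$ are adjacent open sites of $\hat{G}$, then for any $\xi_i \in \kappa(v_i)$ and $\xi_{i+1} \in \kappa(v_{i+1})$ there is an open path in $G_p \cap (B(v_i, R) \cup B(v_{i+1}, R))$ from $\xi_i$ to $\xi_{i+1}$. Since $\kappa(v_j)$ is by definition a connected subgraph of $G_p \cap B(v_j, R)$, it suffices to exhibit a single vertex $c \in \kappa(v_i) \cap \kappa(v_{i+1})$: concatenating a path from $\xi_i$ to $c$ inside $\kappa(v_i)$ with a path from $c$ to $\xi_{i+1}$ inside $\kappa(v_{i+1})$ produces the desired connection, and iterating along $\hat{\gamma}$ yields the proposition.

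To produce the shared vertex, I would use the connection clause of $A_{v_{i+1}}$ to select an open path $\sigma$ in $\kappa(v_{i+1}) \subset G_p \cap B(v_{i+1}, R)$ running from some $a \in B(v_{i+1}, R/10)$ to some $b \in \bnd B(v_{i+1}, R)$. Adjacency of $\hat{v}_i$ and $\hat{v}_{i+1}$ in $\hat{G}$, combined with the Voronoi-tile inclusions $B(v, R/60 - 1) \subset \hat{v} \subset B(v, R/30)$ recorded in the proof of Proposition \ref{prop:uniformoverscales}, yields $d_G(v_i, v_{i+1}) \le R/15 + 1$. Hence for $R$ sufficiently large (say $R \ge 30$) the starting point satisfies $d_G(a, v_i) \le R/10 + R/15 + 1 \le R/5$, so $a \in B(v_i, R/5)$, while any vertex of $\sigma$ at $G$-distance exactly $R$ from $v_{i+1}$ is at $G$-distance at least $R - R/15 - 1 > R/2$ from $v_i$.

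Next I would truncate $\sigma$ at its first exit (if any) from $B(v_i, R)$, producing a subpath $\sigma' \subset G_p \cap B(v_i, R)$ with one endpoint $a \in B(v_i, R/5)$ and the other either on $\bnd B(v_i, R)$ (if truncation occurred, by the $1$-Lipschitz property of $d_G(\cdot, v_i)$) or equal to the original $b$, which lies outside $B(v_i, R/2)$ by the previous paragraph. In either case a discrete intermediate value argument along $\sigma'$ produces a vertex of $\sigma'$ on $\bnd B(v_i, R/2)$, so the component of $G_p \cap B(v_i, R)$ containing $\sigma'$ meets both $B(v_i, R/5)$ and $\bnd B(v_i, R/2)$. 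The uniqueness clause of $A_{v_i}$ then forces this component to be precisely $\kappa(v_i)$; consequently every vertex of $\sigma' \subset \kappa(v_{i+1})$ also lies in $\kappa(v_i)$, furnishing the desired $c$.

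I expect the main obstacle to be purely the bookkeeping of scale inequalities among $R/30, R/15, R/10, R/5, R/2, R$ and of which ball a given subpath is contained in at each step; the only structural inputs are the adjacency bound coming from the Voronoi construction together with the two clauses of $A_v$, so once a threshold such as $R \ge 30$ is fixed the argument should be geometrically routine.
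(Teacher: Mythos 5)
Your proposal is correct and takes essentially the same approach as the paper's proof: both argue by induction on the length of $\hat{\gamma}$, and in the inductive step both use the connectivity clause of $A_v$ for one endpoint of the current edge to produce a short open path that lies in the giant components of both adjacent tiles (the paper invokes $A_{w'}$ for the earlier tile, you invoke $A_{v_{i+1}}$ for the later one, but this is a cosmetic symmetry). The only minor structural difference is that you factor the inductive step through an explicit one-edge lemma asserting $\kappa(v_i)\cap\kappa(v_{i+1})\neq\emptyset$, whereas the paper carries the intermediate vertex $\zeta'$ through the induction directly; the content and the bookkeeping of scales are the same.
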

\begin{proof}
   First, note that since $\kappa(v)$ is a connected component of $G_p \cap B(v,R)$, every pair of points
   in $\kappa(v)$ is joined by an open path in $G_p \cap B(v,R)$, so the case that $\hat{v} = \hat{w}$ is immediate.
   Now suppose that we have proved the proposition for all path $\hat{\gamma}$ of length at most $k$
   for some $k \ge 1$. Consider a path $\hat{\gamma}$ of length $k + 1$ from $\hat{v}$ to $\hat{w}$
   and let $\xi \in \kappa(v)$, $\zeta \in \kappa(w)$.
   Let $\hat{\gamma}'$ be the initial subpath of $\hat{\gamma}$ of length $k$, and denote by $\hat{w}'$ its
   ending point, and note that $\hat{w'}$ is adjacent to $\hat{w}$ in $\hat{G}$.
   Since $A_{w'}$ holds, we have that there exists an open path in $G_p$ from $\bnd B(w', R)$
   to some point $\zeta' \in B(w', R/10)$; this path is then contained in $\kappa(w')$, since it necessarily intersects
   both $\bnd B(w', R/2)$ and $B(w', R/5)$. On the other hand, since $\hat{w}'$ and $\hat{w}$
   are adjacent in $\hat{G}$, we have that $d_G(w', w) \le R/10$, so by the triangle inequality
   the same open path begins outside $B(w, (9R)/10)$ and ends inside $B(w, R/5)$.
   Thus, since $A_w$ holds, the intersection of this path with $B(w,R)$ (which contains $\zeta'$) lies in $\kappa(w)$.
   Now, since $\zeta' \in \kappa(w')$, by the inductive hypothesis, there exists an open path from $\xi$ to $\zeta'$
   which is contained in $G_p \cap \bigcup_{\hat{x} \in \hat{\gamma}'} B(x,R)$ from $\xi$ to $\zeta'$.
   On the other hand, since $\zeta' \in \kappa(w)$, there is an open path in $G_p \cap B(w,R)$ from $\zeta'$ to $\zeta$.
   The concatenation of these open paths joins $\xi$ to $\zeta$ and lies in
   $G_p \cap \left( \bigcup_{\hat{x} \in \hat{\gamma}'} B(x,R) \cup B(w,R) \right) = 
   G_p \cap \bigcup_{\hat{x} \in \hat{\gamma}} B(x,R)$, and so we are done.
\end{proof}

The next proposition tells us that we may glue the microscopic open path promised by Proposition \ref{prop:macrotomicro}
to an existing open microscopic path, so long as the attempted gluing does not occur too close to the endpoints.
\begin{prop} \label{prop:surgery}
   Let $x,y \in V$ and let $\pi$ be an open path from $x$ to $y$ in $G_p$. Let $\hat{\pi} := \hat{\rho}(\pi)$
   be its image under the map $\hat{\rho}: G \to \hat{G}$ (which is not
   necessarily an open path in $\hat{G}$). Suppose that there exists a subpath 
   $\hat{\gamma}$ of $\hat{\pi}$ which has open endpoints, and suppose that $\hat{\gamma}'$
   is an open path in $\hat{G}$ which has the same endpoints as $\hat{\gamma}$. 
   Further suppose that the endpoints of $\hat{\gamma}$ lie at $d_{\hat{G}}$-distance
   at least $K'$ from both $\hat{\rho}(x)$ and $\hat{\rho}(y)$.
   Then there
   exists an open path $\pi'$ in $G_p$ connecting $x$ to $y$ which lies in
   \[
      \left( \pi \setminus \hat{\rho}^{-1}(\hat{\gamma}) \right)  \cup \bigcup_{\hat{v} \in \hat{\gamma}'} B(v, R).
   \]
\end{prop}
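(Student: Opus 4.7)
The plan is to excise the microscopic subpath of $\pi$ corresponding to $\hat{\gamma}$ and replace it with a microscopic realization of $\hat{\gamma}'$ produced by Proposition \ref{prop:macrotomicro}. Since $\hat{\gamma}$ is a subpath of $\hat{\pi}=\hat{\rho}(\pi)$, there is a subpath $\pi_{\mathrm{mid}}$ of $\pi$ whose $\hat{\rho}$-image (after collapsing consecutive repeats) is exactly $\hat{\gamma}$; let $\xi,\zeta$ be its endpoints, so $\xi\in\hat{v}$ and $\zeta\in\hat{w}$ where $\hat{v},\hat{w}$ are the endpoints of $\hat{\gamma}$. Write $\pi=\pi_{\mathrm{pre}}\cdot\pi_{\mathrm{mid}}\cdot\pi_{\mathrm{post}}$.

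The crux is to show $\xi\in\kappa(v)$ and $\zeta\in\kappa(w)$; this is the only place the distance hypothesis is used. By $d_{\hat{G}}(\hat{v},\hat{\rho}(x))\ge K'$ and Proposition \ref{prop:separate}, $B(v,R)\cap B(\rho(x),R)=\emptyset$, so $x\notin B(v,R)$. Hence $\pi_{\mathrm{pre}}$ starts outside $B(v,R)$ and ends at $\xi\in B(v,R/30)\subset B(v,R/5)$; its final maximal sub-arc inside $B(v,R)$ is an open subpath of $G_p\cap B(v,R)$ from a vertex of $\partial B(v,R)$ to $\xi$, and by the discrete intermediate value theorem this sub-arc meets $\partial B(v,R/2)$. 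Thus the $G_p\cap B(v,R)$-component of $\xi$ intersects both $\partial B(v,R/2)$ and $B(v,R/5)$; since $\hat{v}$ is open, $A_v$ forces this component to be the unique giant component $\kappa(v)$, so $\xi\in\kappa(v)$. The argument for $\zeta\in\kappa(w)$ is identical, with $y$ in place of $x$.

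With $\xi\in\kappa(v)$ and $\zeta\in\kappa(w)$ in hand, Proposition \ref{prop:macrotomicro} applied to $\hat{\gamma}'$ produces an open microscopic path $\sigma$ from $\xi$ to $\zeta$ contained in $\bigcup_{\hat{u}\in\hat{\gamma}'}B(u,R)$. Define $\pi'$ by extracting a simple open path from the concatenation $\pi_{\mathrm{pre}}\cdot\sigma\cdot\pi_{\mathrm{post}}$. For the support, $\sigma$ already lies in $\bigcup_{\hat{u}\in\hat{\gamma}'}B(u,R)$, while vertices of $\pi_{\mathrm{pre}}\cup\pi_{\mathrm{post}}$ avoiding $\hat{\rho}^{-1}(\hat{\gamma})$ are automatically in $\pi\setminus\hat{\rho}^{-1}(\hat{\gamma})$, and the remaining exceptions (vertices in $\hat{v}\cup\hat{w}$) lie in $B(v,R)\cup B(w,R)\subset\bigcup_{\hat{u}\in\hat{\gamma}'}B(u,R)$ since $\hat{v},\hat{w}\in\hat{\gamma}'$ as endpoints; any residual sub-excursions of $\pi_{\mathrm{pre}}\cup\pi_{\mathrm{post}}$ through interior tiles of $\hat{\gamma}$ are handled by choosing $\pi_{\mathrm{mid}}$ with its endpoint indices as extreme as possible, iterating the surgery on such sub-excursions if necessary. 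The main obstacle is the geometric identification $\xi\in\kappa(v)$: the $K'$-hypothesis is used precisely to force $\pi$ to leave $B(v,R)$ before first reaching $\hat{v}$, and only then can the uniqueness clause of $A_v$ be applied; once this identification is established, Proposition \ref{prop:macrotomicro} carries the rest and the remainder is bookkeeping.
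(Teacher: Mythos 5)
Your proof is correct and matches the paper's approach: both verify that the surgery endpoints lie in the giant components $\kappa(u)$, $\kappa(v)$ via the $K'$-hypothesis and Proposition \ref{prop:separate}, then apply Proposition \ref{prop:macrotomicro} to produce the microscopic bypass and concatenate. The only cosmetic difference is the choice of surgery endpoints---the paper takes the \emph{first} vertex of $\pi$ in $\hat{u}$ and the \emph{last} in $\hat{v}$ (precisely the ``extreme'' choice you allude to)---and the residual sub-excursion issue you flag is present in both write-ups but does not affect the use of the proposition in Lemma \ref{lem:geolem}.
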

\begin{proof}
   Denote by $\hat{u}$ and $\hat{v}$ the starting and ending points of $\hat{\gamma}$.
   Let $\pi_1$ be the subpath of $\pi$ which starts from $x$ and ends at the first vertex $u'$ of $\pi$
   which lies inside $\hat{u} \subset V(G)$. Let $\pi_2$ be the subpath of $\pi$ which
   starts from the last vertex $v'$ of $\pi$ which lies inside $\hat{v} \subset V(G)$ and ends
   at $y$. By assumption, we have $d_{\hat{G}}(\hat{\rho}(x), \hat{u}), d_{\hat{G}}(\hat{v}, \hat{\rho}(y)) \ge K'$,
   which implies (by our definition of $K'$) that $d_G(x, u), d_G(v,y) \ge R$.
   This means in particular that the connected component of $\pi_1 \cap B(u,R)$ which contains $u' \in \hat{u} \subset B(u,R/5)$
    intersects both $\bnd B(u,R/2)$
   and $B(u,R/5)$; since $\hat{u}$ is open by assumption, this means that $u' \in \kappa(u)$.
   Similarly, we see that $v' \in \kappa(v)$.
   Then, since $\hat{\gamma}'$ is an open path in $\hat{G}$ connecting $\hat{u}$ to $\hat{v}$,
   by Proposition \ref{prop:macrotomicro} there exists an open path $\gamma'$
   in $(\bigcup_{\hat{x} \in \hat{\gamma}'} B(x,R)) \cap G_p$ connecting $u'$ to $v'$.
   Taking $\pi' := \pi_1 * \gamma' * \pi_2$ then gives the desired path (See Figure \ref{fig:macrogluing}). 
      \begin{figure}
   \input{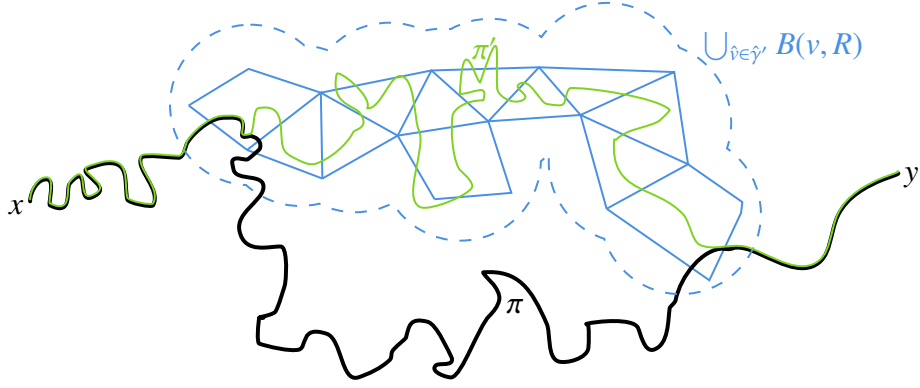}
  \caption{Proposition \ref{prop:surgery}: Gluing a microscopic open path along a macroscopic open path. \label{fig:macrogluing}}
   \end{figure}
\end{proof}

The final ingredient of the proof of Lemma \ref{lem:geolem} is the following geometric property
of coarsely simply connected graphs, which
tells us that we can ``avoid obstacles while staying close to them''.
This lemma is at the heart of our geometric construction, and shows 
how coarse simple-connectedness is key to our argument.
\begin{lemma} \label{lem:obstacles}
   Let $G = (V,E)$ be a $\Delta$-simply connected graph.
   Let $x, y \in V$, let $\beta \subset V$ be a path
   from $x$ to $y$ in $G$, and let $F \subset V$ be a finite
   set of vertices (which we think of as ``forbidden'').
   If there exists some path $\gamma$ from $x$ to $y$ in $G \setminus F$,
   then there exists a path $\gamma'$ from $x$ to $y$ which lies in
   $(N(F, \Delta) \cup \beta) \setminus F$.
   Here $N(F, \Delta) := \bigcup_{f \in F} B(f, \Delta)$
   is the $\Delta$-neighborhood of $F$.
\end{lemma}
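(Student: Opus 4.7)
The plan is to use a homological argument analogous to the one in the proof of Proposition \ref{prop:uniformoverscales}: decompose the ``difference'' between $\beta$ and $\gamma$ into small cycles furnished by $\Delta$-simple connectedness, and then swap pieces of $\beta$ for pieces of $\gamma$ in a carefully chosen way.

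First, I would view $\beta$ and $\gamma$ as $1$-chains over $\F_2$; both have boundary $x \oplus y$, so $\beta \oplus \gamma$ is a cycle. By $\Delta$-simple connectedness of $G$, we may write $\beta \oplus \gamma = C_1 \oplus \cdots \oplus C_N$ with each $C_i$ a cycle of diameter at most $\Delta$. Let $S := \{ i : C_i \text{ contains a vertex of } F \}$, and set
\[
   \eta := \beta \oplus \bigoplus_{i \in S} C_i = \gamma \oplus \bigoplus_{i \notin S} C_i.
\]
The goal is then to extract the desired $\gamma'$ from the chain $\eta$.

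The key observations, following from the two presentations of $\eta$ and the definition of $S$, are: (a) $\partial \eta = x \oplus y$; (b) from the first presentation, the edges of $\eta$ lie in $\beta \cup \bigcup_{i \in S} C_i$, and since each $C_i$ with $i \in S$ has a vertex in $F$ and diameter at most $\Delta$, its vertex set lies in $N(F, \Delta)$, so every vertex touched by $\eta$ lies in $\beta \cup N(F, \Delta)$; (c) from the second presentation, the edges of $\eta$ lie in $\gamma \cup \bigcup_{i \notin S} C_i$, and since $\gamma$ avoids $F$ by hypothesis and no $C_i$ with $i \notin S$ contains any vertex of $F$, no vertex touched by $\eta$ lies in $F$. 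Finally, since $\eta$ is an $\F_2$-chain with boundary $x \oplus y$, every vertex of the edge-subgraph $\eta$ has even degree except $x$ and $y$, so by the handshake lemma $x$ and $y$ lie in the same connected component of $\eta$, which thus contains a simple path $\gamma'$ from $x$ to $y$. By (b) and (c), $\gamma' \subset (\beta \cup N(F, \Delta)) \setminus F$, as desired.

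The main subtlety is the choice of the ``swap set'' $S$: swapping $\beta$ only by those cycles touching $F$ keeps the edge support near $\beta$ and $F$, while the complementary presentation $\eta = \gamma \oplus \bigoplus_{i \notin S} C_i$ simultaneously certifies that the result avoids $F$. Once this algebraic setup is in place, the verification of (a)--(c) is essentially automatic, and the lemma becomes a direct application of $\Delta$-simple connectedness.
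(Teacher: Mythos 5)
Your proof is correct and follows essentially the same approach as the paper: the paper's $\gamma''$ is your $\eta$, the paper's $\mathcal{C}(F)$ is your $S$, and both arguments use the two presentations of the swapped chain together with the boundary computation to extract a path. The only cosmetic difference is that you invoke the handshake lemma to locate the path component, whereas the paper argues directly that one component of $\gamma''$ is an $x$--$y$ path and the rest are cycles.
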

\begin{proof}
   Considering $\beta$ and $\gamma$ as edge paths from $x$ to $y$, we have that $\beta \oplus \gamma$ is a cycle.
   Then, since $G$ is $\Delta$-simply connected, there exists a finite family $\mathcal{C}$ of cycles
   such that $\diam c \le \Delta$ for each $c \in \mathcal{C}$ and 
   $\beta \oplus \gamma = \bigoplus_{c \in \mathcal{C}} c$.
   Define $\mathcal{C}(F) := \{ c \in \mathcal{C} : c \cap F \ne \emptyset \}$, the subcollection consisting
   of those small cycles which intersect\footnote{Here, we say that the \emph{edge} set $c$ ``intersects'' the vertex set $F$
   if some edge in $c$ has some endpoint lying in $F$.} 
   the forbidden region. Now define
   \[
      \gamma'' := \beta \oplus \bigoplus_{c \in \mathcal{C}(F)} c.
   \]
   Note that, by definition of $\mathcal{C}$, we then also have that
   \[
      \gamma'' = \gamma \oplus \bigoplus_{c \in \mathcal{C} \setminus \mathcal{C}(F)} c.
   \]
   Our definition of $\gamma''$ tells us that
   \[
      \gamma'' \subset \beta \cup \bigcup_{c \in \mathcal{C}(F)} c \subset \beta \cup N(F, \Delta),
   \]
   where the second inclusion comes from the fact that every $c \in \mathcal{C}(F)$ has diameter at most $\Delta$
   and intersects $F$.
   
   On the other hand, our second formula for $\gamma''$ tells us that
   \[
      \gamma'' \subset \gamma \cup \bigcup_{c \in \mathcal{C} \setminus \mathcal{C}(F) } c \subset V \setminus F,
   \]
   since $\gamma$ does not intersect $F$ by assumption and all $c \in \mathcal{C} \setminus \mathcal{C}(F)$
   do not intersect $F$ by definition.
   
   Now, considering $\gamma''$ as an $\F_2$-valued 1-chain and using the fact
   that the boundary map is $\F_2$-linear and every cycle by definition has $0$ boundary, we have that
   \[
      \bnd \gamma'' = \bnd \gamma \oplus \bigoplus_{c \in \mathcal{C}} c = \{x,y\}.
   \]
   This implies that one of the connected components $\gamma'$ of $\gamma''$ is a path from $x$ to $y$
   (while the other connected components, if they exist, are all cycles). The above inclusions we 
   proved about $\gamma''$ then show that $\gamma'$ has the desired property (see Figure \ref{fig:obstacles})
   \[
      \gamma' \subset (\beta \cup N(F, \Delta)) \setminus F.
   \]
   
   \begin{figure} 
   \input{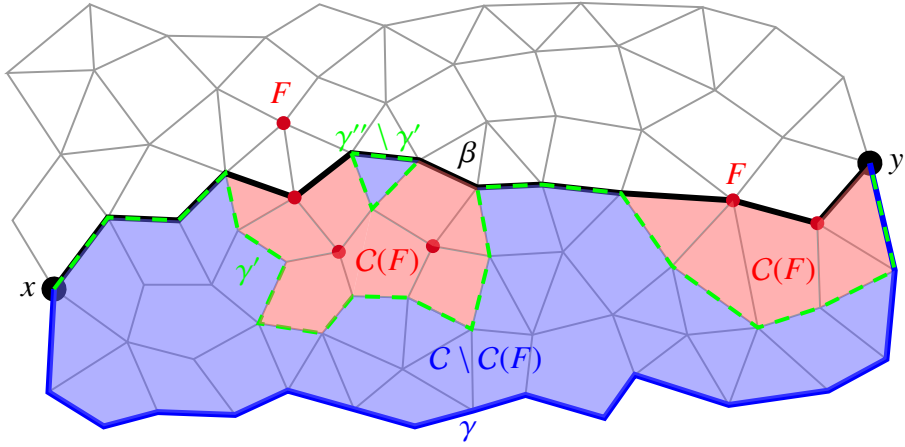}
  \caption{Lemma \ref{lem:obstacles}: Constructing path in $\Delta$-neighborhood of $F$.\label{fig:obstacles}}
   \end{figure}
\end{proof}

Finally we can prove Lemma \ref{lem:geolem}:
\begin{proof}[Proof of Lemma \ref{lem:geolem}]
   Let $\pi$ be an open path in $G_p$ from $x$ to $y$.
   Consider the set
   \[
      X(\pi) := \pi \setminus \left( \bigcup_{ \hat{v} \in I_{x,y} \cup N(F,\Delta) } B(v,R) \right)
   \]
   of sites of $\pi$ which lie outside of the desired region.
   If $X(\pi) = \emptyset$, then we are done.
   Otherwise $X(\pi)$ is a finite union of connected subpaths of $\pi$.
   In this case we will construct another open path $\pi'$ in $G_p$ which connects $x$ to $y$
   such that $X(\pi')$ is \emph{strictly} contained in $X(\pi)$ (and in fact the number of connected components
   will be reduced by at least 1). Then we will be done by induction, since repeating this process
   will reduce $X(\pi')$ to $\emptyset$ in a finite number of steps.
   
   So assume $X(\pi)$ is nonempty, and let $\gamma$ be a connected subpath of $X(\pi)$.
   Consider the macroscopic path $\hat{\pi} := \hat{\rho}(\pi)$.
   We see that $\hat{\rho}(\gamma)$ is a subpath of $\hat{\pi}$ which necessarily lies in 
   $(F \cup B_{\hat{G}}(\hat{\rho}(x),K') \cup B_{\hat{G}}(\hat{\rho}(y),K'))^c$.
   Therefore we can construct a subpath $\hat{\beta}$ of $\hat{\pi}$ containing
   $\hat{\rho}(\gamma)$ as follows. 
   
   Let $\hat{u}'$ be the last site of $\hat{\pi}$ before $\hat{\pi}(\gamma)$ which lies in $F \cup B_{\hat{G}}(\hat{\rho}(x),K' - 1)$,
   and let $\hat{u}$ be the immediately following site in $\hat{\pi}$.
   Let $\hat{v}'$ be the first site of $\hat{\pi}$ after $\hat{\pi}(\gamma)$ which lies in $F \cup B_{\hat{G}}(\hat{\rho}(y),K' -1)$,
   and let $\hat{v}$ be the immediately preceding site in $\hat{\pi}$.
   Then we define $\hat{\gamma}$ to be the subpath of $\hat{\pi}$ starting at $\hat{u}$ and ending at $\hat{v}$.
   Note that $\hat{\gamma}$ contains $\hat{\rho}(\gamma)$ by construction.
   Note also that $\hat{u}$ is necessarily open in $\hat{G}$. To see this, first note that either $\hat{u}' \in F$
   or $\hat{u} \in B_{\hat{G}}(\hat{\rho}(x), K')$. In the first case, if $\hat{u}$ were closed, then it would be
   a closed site of $\hat{G}$ at distance $1 \le \Delta$ from $F$, and hence it would be contained in $F$, as
   part of a closed $\Delta$-cluster intersecting $I_{x,y}$. In the second case, if $\hat{u}$ were closed it would
   be a closed site in $I_{x,y}$, and hence in $F$. In either case, we would have that $\hat{u} \in F$,
   contradicting the definition of $\hat{u}'$. An analogous argument shows that $\hat{v}$ is an open site of
   $\hat{G}$. 
   
   Note now also that by construction $\hat{\gamma}$ is disjoint from $F$ and its endpoints $\hat{u}, \hat{v}$
   satisfy $$d_{\hat{G}}(\hat{\rho}(x), \hat{u}), d_{\hat{G}}(\hat{v},\hat{\rho}(y)) \ge K.$$
   Moreover, we claim that there is a (not necessarily open) path $\hat{\beta}$ in $I_{x,y} \cup N(F,\Delta)$
   which joins $\hat{u}$ to $\hat{v}$. To see this, note that either $\hat{u}' \in F$ or $\hat{u} \in B_{\hat{G}}(\hat{\rho}(x), K')$.
   In the first case, $\hat{u}'$ is in the closed $\Delta$-component of some site $\hat{z} \in I_{x,y}$, and we claim that
   therefore there exists a path in $\hat{G}$ from $\hat{u}'$ to $\hat{z}$ which lies in $N(F,\Delta)$.
   By definition there is a $\Delta$-path (that is, a sequence of vertices with each consecutive pair having distance
   at most $\Delta$) in $F$ starting at $\hat{u}'$ and ending at $\hat{z}$; connecting the consecutive pairs
   in the $\Delta$-path by geodesics in $\hat{G}$ gives the desired path from $\hat{u}'$ to $\hat{z}$.
   Since $\hat{u}$ is adjacent to $\hat{u'}$ we then immediately obtain a path from $\hat{u}$ to $\hat{z} \in I_{x,y}$
   which lies in $N(F, \Delta)$. 
   On the other hand, if $\hat{u}' \notin F$, we have that $\hat{u} \in I_{x,y}$, so in either case we have that
   $\hat{u}$ is connected to $I_{x,y}$ by a (not necessarily open) path in $N(F,\Delta) \cup I_{x,y}$.
   Similar considerations show the same for $\hat{v}$. Since $I_{x,y}$ is itself connected in $\hat{G}$,
   we then obtain a (not necessarily open) path $\hat{\beta}$ from $\hat{u}$ to $\hat{v}$
   which lies inside $N(F,\Delta) \cup I_{x,y}$.
   
   Therefore, the hypotheses of Lemma \ref{lem:obstacles} hold: $\hat{\beta}$
   is a path from $\hat{u}$ to $\hat{v}$, and $\hat{\gamma}$ is a path from $\hat{u}$ to $\hat{v}$ which lies in $F^c$.
   Therefore by Lemma \ref{lem:obstacles} there exists a path $\hat{\gamma}'$ from $\hat{u}$ to $\hat{v}$ which lies in 
   $(N(F,\Delta) \cup \beta) \setminus F \subset (N(F, \Delta) \cup I_{x,y}) \setminus F$.
   We now claim that $\hat{\gamma}'$ is necessarily open. This is again because if $\hat{z} \in I_{x,y} \cup N(F, \Delta)$
   is a closed site, it necessarily belongs to $F$ by definition.
   
   But now the hypotheses of Proposition \ref{prop:surgery} hold: $\hat{u}$ and $\hat{v}$ are open sites in $\hat{\pi}$
   with $d_{\hat{G}}$ distance at least $K'$, and $\hat{\gamma}'$ is an open path of sites in $\hat{G}$ joining them.
   Therefore we can replace the microscopic open path $\pi$ with another open path $\pi'$ from $x$ to $y$
   which is contained in $(\pi \setminus \hat{\rho}^{-1}(\hat{\gamma})) \cup \bigcup_{\hat{v} \in \hat{\gamma}'} B_G(v,R)$.
   Since we ensured $\hat{\rho}(\gamma) \subset \hat{\gamma}$ and $\hat{\gamma}' \subset N(F, \Delta) \cup I_{x,y}$,
   we have that
   \[
      X(\pi') \subset X(\pi) \setminus \gamma,
   \]
   that is, we have strictly reduced the amount of the path lying outside the desired region, as desired.
\end{proof}

\begin{rmk}
   Note that our approach here differs from that of \cite{AP} in that they seek to find a single open macroscopic path
   from a site near $x$ to a site near $y$ (which can then be microscopically glued to $x$ and $y$).
   This approach necessitates some finnicky case-checking (see e.g. Lemma 3.2 of \cite{AP})
   to deal with the possibility that $x$ and $y$ are ``cut off'' from one another by macroscopic closed components.
   Worse still, starting from both ends, one may a priori have to travel through arbitrarily many
   nested macroscopic closed components before one reaches a common open macroscopic component.
   
   On the other hand, our method capitalizes on the fact that our goal is not actually to avoid macroscopically closed sites,
   but just to find a microscopic open path which stays close to $F \cup I_{x,y}$.
   Therefore there is no reason to modify parts of our original path $\pi$ which pass through $F$; indeed, these
   are microscopically open and close to $F$. Instead we just modify the parts of the path between exits from
   and entries to $F$ so that they lie close to $F \cup I_{x,y}$,
   and so we do not have to treat the cases that $x$ or $y$ are nested within macroscopically closed clusters separately.
\end{rmk}
      \begin{figure}
\begin{tikzpicture}[scale=0.5]

\def\l{1} 
\def\dx{1.5} 
\def\dy{0.8660254} 

\newcommand{\drawhex}[2]{
  \pgfmathsetmacro{\x}{#1}
  \pgfmathsetmacro{\y}{#2}
  \foreach \a in {0,60,...,300} {
    \pgfmathsetmacro{\xA}{\x + \l*cos(\a)}
    \pgfmathsetmacro{\yA}{\y + \l*sin(\a)}
    \pgfmathsetmacro{\xB}{\x + \l*cos(\a+60)}
    \pgfmathsetmacro{\yB}{\y + \l*sin(\a+60)}
    \draw[black] (\xA,\yA) -- (\xB,\yB);
  }
}

\newcommand{\fillhex}[3]{
  \pgfmathsetmacro{\x}{#2}
  \pgfmathsetmacro{\y}{#3}
  \pgfmathsetmacro{\xA}{\x + \l*cos(0)}
  \pgfmathsetmacro{\yA}{\y + \l*sin(0)}
  \pgfmathsetmacro{\xB}{\x + \l*cos(60)}
  \pgfmathsetmacro{\yB}{\y + \l*sin(60)}
  \pgfmathsetmacro{\xC}{\x + \l*cos(120)}
  \pgfmathsetmacro{\yC}{\y + \l*sin(120)}
  \pgfmathsetmacro{\xD}{\x + \l*cos(180)}
  \pgfmathsetmacro{\yD}{\y + \l*sin(180)}
  \pgfmathsetmacro{\xE}{\x + \l*cos(240)}
  \pgfmathsetmacro{\yE}{\y + \l*sin(240)}
  \pgfmathsetmacro{\xF}{\x + \l*cos(300)}
  \pgfmathsetmacro{\yF}{\y + \l*sin(300)}
  \path[fill=#1!30] 
    (\xA,\yA) -- (\xB,\yB) -- (\xC,\yC)
    -- (\xD,\yD) -- (\xE,\yE) -- (\xF,\yF)
    -- cycle;
    \drawhex{\x}{\y}
}
\coordinate (H01) at (0, 0);
\coordinate (H02) at (\dx, \dy);
\coordinate (H03) at (\dx, -\dy);
\coordinate (H05) at (7*\dx, -\dy);
\coordinate (H06) at (6*\dx, 0);
\coordinate (H07) at (6*\dx, 2*\dy);

\coordinate (H08) at (7*\dx, \dy);
\coordinate (H09) at (8*\dx, 2*\dy);
\coordinate (H10) at (8*\dx, 4*\dy);
\coordinate (H11) at (9*\dx, 5*\dy);
\coordinate (H12) at (9*\dx, 7*\dy);

\coordinate (H13) at (10*\dx, 6*\dy);
\coordinate (H14) at (8*\dx, 6*\dy);
\coordinate (H15) at (7*\dx, 5*\dy);
\coordinate (H16) at (7*\dx, 3*\dy);
\coordinate (H17) at (6*\dx, 2*\dy); 
\coordinate (H18) at (6*\dx, 0);     
\coordinate (H19) at (10*\dx, 4*\dy);
\coordinate (H20) at (9*\dx, 3*\dy);
\coordinate (H21) at (9*\dx, \dy);
\coordinate (H22) at (8*\dx, 0);
\coordinate (H23) at (7*\dx, -1*\dy);

\coordinate (H24) at (8*\dx, 0);      
\coordinate (H25) at (9*\dx, \dy);    
\coordinate (H27) at (10*\dx, 2*\dy);
\coordinate (H28) at (11*\dx, \dy);
\coordinate (H30) at (12*\dx, 2*\dy);
\coordinate (H31) at (13*\dx, \dy);
\coordinate (H32) at (14*\dx, 0);
\coordinate (H33) at (14*\dx, 2*\dy);
\coordinate (H34) at (15*\dx, \dy);
\coordinate (H35) at (15*\dx, 3*\dy);
\coordinate (H36) at (15*\dx, -1*\dy);
\coordinate (H37) at (16*\dx, 0);
\coordinate (H38) at (16*\dx, 2*\dy);

\coordinate (H39) at (2*\dx, 0);
\coordinate (H40) at (3*\dx, \dy);
\coordinate (H41) at (2*\dx, 4*\dy);
\coordinate (H43) at (5*\dx, \dy);

\coordinate (H44) at (0, 2*\dy);
\coordinate (H45) at (\dx, 3*\dy);
\coordinate (H46) at (3, 2*\dy);
\coordinate (H47) at (3*\dx, 3*\dy);
\coordinate (H48) at (4*\dx, 2*\dy);

%
 
\foreach \pt in {H01,H02,H03,H05,H06,H07,H08,H09,H10,H11,H12,H13,H14,H15,H16,H17,H18,H19,H20,H21,H22,H23,H27,H28,H30,H31,H32,H33,H34,H35,H36,H37,H38,H39,H40,H41,H43,H44,H45,H46,H47,H48} {
    \draw[blue, dashed] (\pt) circle (1.3*\dx);
}

\foreach \pt in {H01,H02,H03,H05,H06,H07,H08,H09,H10,H11,H12,H13,H14,H15,H16,H17,H18,H19,H20,H21,H22,H23,H27,H28,H30,H31,H32,H33,H34,H35,H36,H37,H38,H39,H40,H41,H43,H44,H45,H46,H47,H48} {
   \fill[white] (\pt) circle (1.3*\dx);
}

  
%


\drawhex{0}{0}
\fillhex{blue}{\dx}{\dy}
\drawhex{\dx}{-\dy}
\drawhex{7*\dx}{-\dy}
\drawhex{6*\dx}{0}
\drawhex{6*\dx}{2*\dy}
\fillhex{red}{7*\dx}{\dy}
\fillhex{red}{8*\dx}{2*\dy}
\fillhex{red}{8*\dx}{4*\dy}
\fillhex{red}{9*\dx}{5*\dy}
\fillhex{blue}{9*\dx}{7*\dy}

\fillhex{blue}{10*\dx}{6*\dy}
\fillhex{blue}{8*\dx}{6*\dy}
\fillhex{blue}{7*\dx}{5*\dy}
\fillhex{blue}{7*\dx}{3*\dy}
\fillhex{blue}{6*\dx}{2*\dy}
\fillhex{blue}{6*\dx}{0*\dy}
\fillhex{blue}{10*\dx}{4*\dy}
\fillhex{blue}{9*\dx}{3*\dy}
\fillhex{blue}{9*\dx}{1*\dy}
\fillhex{blue}{8*\dx}{0*\dy}
\fillhex{blue}{7*\dx}{-1*\dy}

\drawhex{8*\dx}{0}
\drawhex{9*\dx}{\dy}
\drawhex{10*\dx}{2*\dy}
\drawhex{11*\dx}{\dy}
\drawhex{12*\dx}{2*\dy}
\drawhex{13*\dx}{\dy}
\drawhex{14*\dx}{0}
\drawhex{14*\dx}{2*\dy}
\drawhex{15*\dx}{\dy}
\drawhex{15*\dx}{3*\dy}
\drawhex{15*\dx}{-1*\dy}
\drawhex{16*\dx}{0}
\drawhex{16*\dx}{2*\dy}

\fillhex{blue}{2*\dx}{0}
\fillhex{blue}{3*\dx}{\dy}
\drawhex{2*\dx}{4*\dy}
\fillhex{blue}{2*\dx}{4*\dy}
\drawhex{5*\dx}{\dy}

\drawhex{0}{2*\dy}
\fillhex{blue}{\dx}{3*\dy}
\fillhex{red}{3}{2*\dy}
\fillhex{blue}{3*\dx}{3*\dy}
\drawhex{4*\dx}{2*\dy}


 \draw[line width=1pt, black, smooth]
  plot coordinates {
    (1.3*\dx, 1.5*\dy)
    (1.7*\dx, 1.5*\dy)
    (2*\dx, 4.5*\dy)
    (3*\dx, 5.5*\dy)
    (4*\dx, 9*\dy)
    (5*\dx, 8*\dy)
    (5*\dx, 6.5*\dy)
    (6*\dx, 5.2*\dy)
    (7*\dx, 4.7*\dy)
    (8*\dx, 4.2*\dy)
    (9*\dx, 3.6*\dy)
    (10*\dx, 2.5*\dy)
    (11*\dx, -1*\dy)
    (11.5*\dx, 0*\dy)
    (12*\dx, -0.5*\dy)
    (12.5*\dx, -2*\dy)
    (13*\dx, -3*\dy)
    (13.5*\dx, -1.2*\dy)
    (14*\dx, -0.8*\dy)
    (14.3*\dx, -2*\dy)
    (14.6*\dx, -0.7*\dy)
    (14.9*\dx, 1.2*\dy)
  };
  
   \draw[line width=1pt, green, smooth]
  plot coordinates {
    (2*\dx, 4.5*\dy)
   (2.5*\dx, 3.5*\dy)
    (4*\dx, 3.5*\dy)
    (5*\dx, 2*\dy)
    (5.5*\dx, 3.3*\dy)
    (5.8*\dx, 3.8*\dy)
    (6*\dx, 5.2*\dy)
  };

\draw (1.1*\dx,1.5*\dy) node {$x$};
\draw (14.9*\dx,1.5*\dy) node {$y$};

\draw (5.1*\dx,5.1*\dy) node {$\pi$};
\draw (2.1*\dx,4*\dy) node {$\hat{u}$};
\draw (2.1*\dx,2.1*\dy) node {$\hat{u}'$};

\draw (8*\dx,3.6*\dy) node {$\hat{v}'$};
\draw (7*\dx,5.2*\dy) node {$\hat{v}$};

\end{tikzpicture}
\caption{Lemma \ref{lem:geolem}: modifying a part of $\pi$ to stay near $I_{x,y} \cup N(F, \Delta)$. \label{fig:lem1}}
   \end{figure}

\subsection{Proof of Lemma \ref{lem:problem}} \label{sec:proof_of_lem_problem}

The purpose of this section is to prove Lemma \ref{lem:problem}, i.e. that any fixed sufficiently large scale,
$|F|$ is exponentially likely to be at most $K d_G(x,y)$.
We do this by showing that $|F|$ is stochastically dominated by a sum of at most $d_G(x,y) + O(1)$ independent random
variables with uniform exponential bounds on their tails.

First, to relate our dependent site percolation on $\hat{G}$ to an independent percolation,
we use the famous ``domination by product measures'' result of Liggett, Schonmann, and Stacey.
\begin{thm}[\cite{LSS} Corollary 1.4] \label{thm:LSS}
   Fix $\hat{D}, K' < \infty$. Then there exists a function $\sigma: [0,1] \to [0,1]$ (depending on $\hat{D}$ and $K'$)
   such that $\lim_{q \to 1} \sigma(q) = 1$
   with the following property. Let $\hat{G}$ be any graph with degree at most $\hat{D}$ and let
   $X$ be a random subset of sites of $\hat{G}$ which has ``dependence range $K'$'' in the sense that
   for any $K'$-separated subset $S \subset \hat{G}$, the collection of events
   \[
      \left \{ \{ s \in X \} : s \in S \right\}
   \]
   is mutually independent.
   Suppose also that for some $q \in [0,1]$, $\Prob( v \in X ) \ge q$ for all $v \in \hat{V}$.
   Then there exists a coupling of $X$ and $\hat{G}_{\sigma(q)}$ such that
   \[
      \hat{G}_{\sigma(q)} \subset X
   \]
   almost surely,
   where $\hat{G}_{\sigma(q)}$ is independent Bernoulli site percolation on $\hat{G}$ with parameter 
   $\sigma(q)$.
\end{thm}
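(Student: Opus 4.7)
The plan is to prove Theorem~\ref{thm:LSS} via the standard sequential-thinning coupling, following the template of Liggett, Schonmann, and Stacey. Enumerate the vertices $v_1, v_2, \ldots$ of $\hat{V}$ in some fixed order, and take independent Uniform$[0,1]$ random variables $U_1, U_2, \ldots$ globally independent of $X$. Build the coupled Bernoulli process $Y$ inductively by setting $Y_{v_i} := \ind[v_i \in X] \cdot \ind[U_i \le r_i]$, where $r_i$ is a measurable function of the previously revealed values of $X$ and $Y$, chosen so that conditionally on the history $Y_{v_i} \sim \mathrm{Bern}(\sigma(q))$. Taking $r_i := \sigma(q)/p_i$ accomplishes this provided the one-step conditional probability $p_i := \Prob(v_i \in X \mid \mathcal{F}_{i-1})$ satisfies $p_i \ge \sigma(q)$ almost surely; thus the whole argument reduces to producing such a uniform lower bound $\sigma(q)$ with $\sigma(q) \to 1$ as $q \to 1$.

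The key structural input is that, by the $K'$-dependence hypothesis, $p_i$ depends only on revealed $X$-values at vertices in $B_{\hat{G}}(v_i, K') \setminus \{v_i\}$, a set of size at most $M := 1 + \hat{D} + \hat{D}^2 + \cdots + \hat{D}^{K'}$. One then hopes to combine this bounded-range dependence with the marginal bound $\Prob(v \in X) \ge q$ to extract an explicit lower bound for $p_i$ depending only on $\hat{D}$, $K'$, and $q$.

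The main obstacle is precisely this conditional-probability estimate, because $K'$-dependence alone places no constraint on the joint law of $X$ over nearby vertices --- in principle a single adversarial conditioning event could drive $p_i$ arbitrarily close to zero (consider, for instance, a pair of perfectly correlated sites within distance $K'$). The standard remedy, and the technical core of the LSS argument, is to enumerate $\hat{V}$ in an order induced by a greedy proper coloring of the $K'$-th graph power $\hat{G}^{K'}$, which admits chromatic number at most $M$ by the degree bound, and to process one color class at a time. Within each class the vertices are pairwise $K'$-separated, so the mutual independence hypothesis applies directly, and combining this with a careful inclusion--exclusion over the at most $2^M$ local configurations outside the current class propagates a uniform lower bound of the form $\sigma(q) = 1 - O\bigl(M(1-q)^{c(M)}\bigr)$ down to each $p_i$. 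Since $\sigma(q) \to 1$ as $q \to 1$, the thinning construction produces the desired coupling $\hat{G}_{\sigma(q)} \subseteq X$ almost surely.
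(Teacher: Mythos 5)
Theorem~\ref{thm:LSS} is stated with a citation to Liggett--Schonmann--Stacey and is not proved in the paper, so there is no in-text argument to compare against. Your sketch, however, contains a genuine gap at exactly the point you flag as ``the technical core.'' You reduce the problem to producing a uniform lower bound $\sigma(q)$ on the pointwise conditional probability $p_i = \Prob(v_i \in X \mid \mathcal{F}_{i-1})$, where $\mathcal{F}_{i-1}$ contains the revealed $X$-values of all previously processed sites, and you claim that enumerating by color classes of $\hat{G}^{K'}$ and then doing inclusion--exclusion over the $\le 2^M$ local configurations yields such a bound. This is false. Take $\hat{G} = \mathbb{Z}$ (so $\hat{D}=2$), $K' = 2$, and let $X_{2i-1} = X_{2i} = B_i$ with $B_i$ i.i.d.\ $\mathrm{Bern}(q)$. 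Every $2$-separated subset of $\mathbb{Z}$ meets each pair $\{2i-1,2i\}$ in at most one point, so it indexes distinct $B_i$'s and the mutual-independence hypothesis holds with all marginals equal to $q$. Coloring by parity and processing odds first, when you reach an even vertex $2i$ you have already revealed $X_{2i-1}$, and $\Prob(v_{2i} \in X \mid \mathcal{F}) = \ind[X_{2i-1} = 1]$, which is $0$ with probability $1-q > 0$ for every $q < 1$. No choice of $\sigma(q) > 0$ makes $r_i = \sigma(q)/p_i$ well defined on that event, and more fundamentally no scheme with $Y_{v_i} \le \ind[v_i \in X]$ and $\mathcal{F}_{i-1} \supseteq \sigma(X_{v_1},\dots,X_{v_{i-1}})$ can output a conditionally Bernoulli$(\sigma(q))$ variable when the conditional law of $X_{v_i}$ is a point mass at $0$. (A secondary issue: $K'$-dependence gives mutual independence of $\{v \in X\}$ over $K'$-separated sets but does not imply the conditional independence you invoke to say that $p_i$ depends only on $X$ within $B(v_i,K')$.)

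The conclusion of the theorem is still true in this example, so the obstruction is to your route, not to the result: $K'$-dependence with high marginals gives no pointwise control of conditional probabilities, only control after aggregating over blocks, and this gap between pointwise and block-averaged bounds is exactly what makes the LSS theorem nontrivial. Their argument does not proceed by a pointwise-thinned sequential coupling of this form. A repair of your scheme would have to either (a) condition only on the coupled process $Y$ (never on $X$ directly), in which case one must show the ``effective'' conditional probability $\Prob(v_i \in X \mid Y_{v_1},\dots,Y_{v_{i-1}})$ stays bounded below --- this is true but is not a consequence of inclusion--exclusion over $2^M$ local configurations and is where the real work of LSS Theorem~0.0 lives --- or (b) work at the level of blocks of vertices so that low-conditional-probability events are absorbed into the block, then transfer to bounded-degree graphs. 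As written, the step ``propagates a uniform lower bound down to each $p_i$'' is exactly where the proof breaks.
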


Now note that $|F|$ is a decreasing random variable, in the sense that if we consider two 
configurations of open sites $X, Y \subset \hat{V}$ such that $X \subset Y$, then the value of $|F|$
given by the configuration $Y$ is at most as large as that given by the configuration $X$. 
Therefore, for a fixed scale $R$, setting $q = \Prob_p(A_v) = \Prob_p( \hat{v} \mbox{ is open})$,
we have that
\[
   \Prob_p( |F| \ge K d_G(x,y) ) \le \Prob( |F_{\sigma(q)}| \ge K d_G(x,y) )
\]
where we define $F_{\sigma(q)}$ to be the union of all the closed $\Delta$-clusters intersection $I_{x,y}$
\emph{with respect to} $\hat{G}_{\sigma(q)}$, the Bernoulli site percolation on $\hat{G}$ with
parameter $\sigma(q)$. Thus, it suffices to show that the latter quantity decays exponentially in $d_G(x,y)$.

\begin{prop} \label{prop:indepbound}
   There exists $0 < \rho_0 < 1$ (depending only on $G$, not on $R$ or $\hat{G}$)
   and $K, C < \infty, c > 0$ 
   such that for any $\rho_0 < \rho < 1$, for all $t \ge d_G(x,y)$,
   \[
      \Prob( |F_{\rho}| \ge K t ) \le C \exp(-c t)
   \]
   for all $x,y \in V$.
\end{prop}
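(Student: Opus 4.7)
The plan is a lattice-animal union bound in an auxiliary graph. Let $\hat{G}^{\Delta}$ denote the graph on $\hat{V}$ with an edge between every pair of sites at $d_{\hat{G}}$-distance at most $\Delta$; its maximum degree is bounded by $D' := \hat{D}^{\Delta}$, which depends only on $G$ by Proposition \ref{prop:uniformoverscales}. Set $n := |I_{x,y}|$. From the proof of Proposition \ref{prop:separate} one has $d_{\hat{G}}(\hat{\rho}(x),\hat{\rho}(y)) \le (40\hat{D}/R)\, d_G(x,y)$, and each ball $B_{\hat{G}}(\cdot,K')$ has at most $\hat{D}^{K'}$ sites, so $n \le C_1 d_G(x,y) + C_2$ for constants $C_i = C_i(R)$ depending on $R$ but not on $x,y,t$. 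In particular, for $t \ge \max(d_G(x,y),1)$ this gives $n \le C_3 t$ for some $C_3 = C_3(R)$.

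Set $m := \lceil Kt \rceil$. The first key step is to exhibit, on the event $\{|F_\rho| \ge Kt\}$, a \emph{witness}: an integer $k \ge 1$, distinct roots $r_1,\ldots,r_k \in I_{x,y}$, and pairwise disjoint, entirely closed subsets $A_1,\ldots,A_k \subset \hat{V}$ each connected in $\hat{G}^{\Delta}$, with $r_j \in A_j$ and $\sum_j |A_j| = m$. I would construct this by enumerating the closed $\Delta$-clusters comprising $F_\rho$ in some canonical order, taking $r_j$ to be the lexicographically smallest element of the $j$-th cluster's intersection with $I_{x,y}$, and including the clusters one by one, truncating the last one by a BFS in $\hat{G}^{\Delta}$ from its root so that the total size is exactly $m$.

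The second step is the union bound. By a standard lattice-animal estimate (Kesten-type), the number of $\hat{G}^{\Delta}$-connected sets of size $s$ containing a fixed vertex is at most $(eD')^s$. Summing over $k$, the choice of roots, the sizes $s_1,\ldots,s_k$ with $\sum s_j = m$, and the connected sets themselves, and using the Vandermonde identity $\sum_k \binom{n}{k}\binom{m-1}{k-1} = \binom{n+m-1}{n-1} \le 2^{n+m}$, the number of possible witnesses is at most $2^{n+m}(eD')^m$. Since each witness has probability $(1-\rho)^m$ of being entirely closed, setting $q := eD'(1-\rho)$ yields
\[
   \Prob(|F_\rho| \ge Kt) \;\le\; 2^{n+m}(eD')^m (1-\rho)^m \;=\; 2^n (2q)^m.
\]

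To finish, choose $\rho_0 \in (0,1)$ so that $2eD'(1-\rho_0) < 1$; since $D'$ depends only on $G$, so does $\rho_0$. For $\rho > \rho_0$ one has $2q \le 2q_0 < 1$, and the bound becomes $\exp\bigl(n\log 2 - m\log(1/(2q_0))\bigr)$. Using $n \le C_3 t$ and $m \ge Kt$, this is at most $\exp\bigl(-t(K\log(1/(2q_0)) - C_3\log 2)\bigr)$, which decays exponentially in $t$ provided $K$ is chosen large enough in terms of $C_3$ and $q_0$. The main delicacy will be the witness construction and keeping the combinatorial bookkeeping tight enough that the $2^n$ factor is absorbed by the $(2q_0)^{Kt}$ decay; once this is done, the rest is a routine geometric-series estimate.
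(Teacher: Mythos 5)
Your proof is correct and takes a genuinely different, though morally parallel, route. The paper's proof stochastically dominates $|F_\rho|$ by a sum of independent \emph{preclusters} $\sum_{\hat{v} \in I_{x,y}} |\tilde{C}(\hat{v})|$ (each $\tilde{C}(\hat{v})$ an independent copy of the $\Delta$-closed cluster of $\hat{v}$), establishes a uniform exponential tail for each $|\tilde{C}(\hat{v})|$ via the same lattice-animal counting you use, and then applies an exponential Chernoff bound. You instead go directly: on $\{|F_\rho| \ge Kt\}$ you exhibit a deterministic \emph{witness} (a forest of disjoint, closed, $\hat{G}^\Delta$-connected pieces rooted in $I_{x,y}$ of total size exactly $m = \lceil Kt\rceil$) and union-bound over all possible witnesses with a Vandermonde count. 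The two approaches are equivalent in strength here, and both hinge on the same facts: the lattice-animal count $\lambda^s$ for $\Delta$-connected sets of size $s$ rooted at a point (with $\lambda$ depending only on $\hat{D}^\Delta$, hence only on $G$ by Proposition \ref{prop:uniformoverscales}), and the bound $|I_{x,y}| = O(t)$. Your version avoids the precluster stochastic domination step (which the paper invokes by reference to \cite{AP}), at the cost of a bit more combinatorial bookkeeping in the witness construction and the truncation-by-BFS argument. One small thing: you may check that the bound $n \le C_1 d_G(o,x) + C_2$ can be made independent of $R$ more easily than via Proposition \ref{prop:separate} — simply use that $\hat{\rho}$ is distance-decreasing so $d_{\hat{G}}(\hat{\rho}(x),\hat{\rho}(y)) \le d_G(x,y)$, and that the balls $B_{\hat{G}}(\cdot,K')$ have size $\le \hat{D}^{K'}$; then your constants $C_3$, and hence $K,c$, are $R$-free (not that this is required by the statement, since the parenthetical restriction in the proposition applies only to $\rho_0$). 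Your fix of the exponential base $2q_0$ in terms of $\rho_0$ also gives the uniformity in $\rho$ over $(\rho_0,1)$ cleanly, which in the paper's version would require fixing the Chernoff parameter $s$ in terms of $\rho_0$ rather than $\rho$.
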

\begin{proof}
   For each site $\hat{v} \in \hat{G}$, let $\tilde{C}(\hat{v})$ be a random subset of $\hat{V}$
   which has the same law as the closed $\Delta$-cluster of $\hat{v}$  in $\hat{G}_{\rho}$ but is
   independent from the process $\hat{G}_{\rho}$ and all the other $\tilde{C}(\hat{w})$.
   These are sometimes called \emph{preclusters} (compare this
   to \cite{AP}, Section 4, page 1047, in the midst of the proof of Theorem 1.1).
   We then have that $|F_{\rho}|$ is stochastically dominated by
   the sum of independent random variables $\sum_{\hat{v} \in I_{x,y}} |\tilde{C}(\hat{v})|$.
   
   Moreover, we have a uniform exponential tail bound on the collection $|\tilde{C}(\hat{v})|$
   if we take $\rho > \rho_0 := 1 - (2 D^{\Delta})^{-1}$; we see that 
   \begin{align*}
      \Prob( \tilde{C}(\hat{v}) = k ) &\le 
      \sum_{\substack{\hat{v} \in S \subset \hat{V}, |S| = k, \\ 
      S \mbox{ } \Delta-\mbox{connected}}} \Prob_{\rho}( S \mbox{ closed}) \\
      &\le (2 D^{\Delta})^k (1 - \rho)^k
   \end{align*}
   decays exponentially in $k$. Here in the second inequality we have used the fact that the number of $\Delta$-connected
   subsets of size $k$ containing $\hat{v}$ is at most $(2 D^{\Delta})^k$; since $\Delta$-connected subsets are
   precisely subsets which are connected in the graph which connects any two vertices of $\hat{G}$ with
   distance at most $\Delta$ by an edge, and this graph has degree at most $D^{\Delta}$, this observation
   follows from the observation that for any connected graph, there exists a path starting from any vertex
   which covers a spanning tree but traverses each edge at most twice.
   
   The exponential tail bound then gives us an exponential moment, that is,
   fixing $0 < s < - \log [ 2 D^{\Delta} (1 - \rho) ]$, we have that
   $\E[ \exp( s |\tilde{C}(v)| ) ] \le \sum_{k=0}^{\infty} (e^s 2 D^{\Delta} (1 - \rho) )^k =: Q < \infty$.
   
   Now choose $K$ sufficiently large that $\log Q - sK < 0$.
   Then for any $t \ge d_G(x,y)$ we obtain the large deviation bound
   \begin{align*}
      \Prob( |F_\rho| \ge K t ) &\le
      \Prob\left( \sum_{\hat{v} \in I_{x,y}} |\tilde{C}(\hat{v})| \ge K t \right) \\
      &= \Prob\left( \prod_{\hat{v} \in I_{x,y}} \exp( s |\tilde{C}(\hat{v})| ) \ge \exp( s K t ) \right) \\
      &\le \exp(-sK t) \prod_{\hat{v} \in I_{x,y}} \E\left[ \exp(t |\tilde{C}(\hat{v}) | )\right] \\
      &\le \exp(-sK t) Q^{|I_{x,y}|} \\
      &\le Q^{2 \hat{D}^{K'}} \exp( [(\log Q) - sK ] t ),
   \end{align*}
   where we have used independence and Markov's inequality in the third line.
   By our choice of $K$, this quantity is decreasing exponentially in $t$,
   and so we are done. 
\end{proof}

\begin{proof}[Proof of Lemma \ref{lem:problem}]
   Given $G$, take $\rho_0 < \rho < 1$ where $\rho_0$ is as in Proposition \ref{prop:indepbound}.
   Then take $0 < q < 1$ large enough that $\sigma(q) \ge \rho$, where $\sigma$ is given
   by Theorem \ref{thm:LSS}.
   Finally, since $\inf_{\hat{v} \in \hat{G}} \Prob_p(A_v)$ tends to 1 as $R$ tends to infinity
   by Proposition \ref{prop:quantunique} \cite{CMT},
   fix $R$ sufficiently large that $\Prob_p(A_v) \ge q$. Since by Theorem \ref{thm:LSS} the dependent site percolation on $\hat{G}$
   induced by $G_p$ then stochastically dominates the Bernoulli site percolation $\hat{G}_{\rho}$, we have that
   \[
      \Prob( |F| > K d_G(x,y) ) \le \Prob( |F_{\rho}| > d_G(x,y) )
   \]
   decays exponentially in $d_G(x,y)$, as desired.
\end{proof}

Finally, let us quickly sketch the proof of Theorem \ref{thm:finitelypresented}.
The proof is essentially the same as the proof of Theorem \ref{thm:main}, except that we do not
perform any coarse graining; for this reason we have to assume that $p$ is close to 1,
instead of just supercritical. We give the proof for edge percolation;
the proof for site percolation is even more similar to the proof of Theorem \ref{thm:main}.

\begin{proof}[Proof of Theorem \ref{thm:finitelypresented}]
   Let $\Gamma = \langle S | R \rangle$ be a finitely presented group, let $G$ be its Cayley graph associated
   to the generating set $S$, and suppose that $\Delta$ be an upper bound for the word length (with respect to $S$)
   of every relator in $R$ (which we assume to be a finite set).
   Then $G$ has degree at most $2|S|$ and it is $\Delta$-simply connected.
   
   We fix $p \in [0,1]$ and perform independent edge percolation on $G$.
   Say that an edge $e$ in $E(G)$ is $\Delta$-adjacent to the edge $e'$ if there is an edge path in $G$ of diameter less than $\Delta$
   which contains both $e$ and $e'$. 
   We define the $\Delta$-\emph{closed cluster of} $e$ to be empty if $e$ is open
   and otherwise equal to the connected component of $e$ in the graph whose vertex set is the set of closed edges of $G$
   and whose edges connect $\Delta$-adjacent closed edges of $G$.
   
   Now, fix $x,y \in V(G)$, and fix an edge-geodesic $[x,y]$ from $x$ to $y$.
   Define $F$ to be the union of the $\Delta$-closed clusters of all the edges in $[x,y]$.
   Suppose that $x$ is connected to $y$ in $G_p$.
   A version of Lemma \ref{lem:obstacles} then tells us there is an open path in $N(F, \Delta) \cup [x,y]) \setminus F$
   from $x$ to $y$, where here $N(F, \Delta)$ consists of all edges of $G$ which are
   $\Delta$-adjacent to some edge in $F$.
   (Once we interpret $N(F, \Delta)$ this way, the only modification we need to make to Lemma \ref{lem:obstacles}
   is that $F$ is a set of forbidden edges, rather than forbidden vertices, and that the paths in question are
   edge paths rather than vertex paths. The proof of this modified lemma is identical).
   
   Thus, we see that if $x \connectsthru{G_p} y$, then
   \[ d_{G_p}(x,y) \le d_G(x,y) + |N(F, \Delta)| \le d_G(x,y) + 2(2|S|)^{\Delta} |F|, \]
   so again we will be done once we prove that, for some $p_0 < 1$, 
   for every $p \in (p_0, 1]$, $\Prob_p( |F| \ge t )$ decays exponentially in $t$ for $t \ge d_G(x,y)$.
   But the proof of this statement is almost exactly the same as the proof
   of Proposition \ref{prop:indepbound}, the only difference being a slightly different bound
   on the number of $\Delta$-connected sets of size $k$ (again just because we are dealing with edges instead of vertices).
   One can check that the $p_0$ we get only depends on $|S|$ and $\Delta$.
   
   The above gives the first statement of the theorem. The second statement follows from the fact that when 
   a unique infinite cluster exists, we have $\inf_{x,y} \Prob(x \connectsthru{G_p} y) > 0$, just
   as discussed in the beginning of the proof of Theorem \ref{thm:main}.
\end{proof}

\section{Continuity of the time constants} \label{sec:continuity}
Theorem \ref{thm:main} can be interpreted as saying that the intrinsic (``chemical'') distance on an infinite supercritical
cluster is very likely at most a constant times the ambient graph distance.
The Kingman subadditive ergodic theorem \cite{Kingman} implies that along a fixed ``direction'', at large scales
the chemical distance is very likely \emph{very close} to a constant times the ambient graph distance,
where the constant depends on the direction. More precisely, for a point $u \in V$, denote by $\mathring{u}$ the (random) site
of the infinite cluster 
$G_p$ which is closest in graph distance to $u$ (with ties broken by choosing uniformly 
at random from the finitely many candidates); then define a pseudometric on $V$ by taking
$D_p(x,y) := d_{G_p}(\mathring{x}, \mathring{y})$. Then, for any $g \in \Gamma := \Aut(G)$ there exists a
``time constant'' $\mu_p(g)$ such that
\[
   \lim_{n \to \infty} \frac{D_p(o, g^n o)}{n} = \mu_p(g)
\]
almost surely, and in mean. (Here $o$ is an arbitrary basepoint for $G$).
In fact, the collection of time constants $\mu(g)$ has much more structure;
they give a norm on $\Gamma/[\Gamma,\Gamma] \otimes \R \cong \R^{d'}$,
and associated to this norm is a particular metric space
(a certain nilpotent Lie group with a Carnot-Carath\'{e}odory metric)
which one should expect to be the almost-sure Gromov-Hausdorff scaling limit
of the random pseudometric $D$ (see \cite{CantrellFurman, BenjaminiTessera}).
Since showing this latter result would require 
weakening the assumptions in \cite{CantrellFurman}, we do
not pursue this here.
The thesis \cite{de2024asymptotic}
does attempt such a weakening; once this result is published, it should be possible to combine it with 
the results here in order to deduce a shape theorem in our setting.

In any case, if such a shape theorem holds, the limit geometry
is determined by limits of the form
\[
   \lim_{n \to \infty} \frac{ \E D_p(o, x_n) }{ d_G(o,x_n) }.
\]
A natural question, first addressed when $G$ is $\mathbb{Z}^d$ in \cite{garet2017continuity}, is whether these geometric quantities are continuous in $p$;
in fact we will be able to show this without addressing the question of
whether the limit in question exists---our methods will show Lipschitz continuity
of any $\limsup$ or $\liminf$ uniformly. 
The goal of the rest of this paper is to prove Theorem \ref{thm:cty} as stated in Section \ref{sec:intro}.

The proof of Theorem \ref{thm:cty} is inspired by (and similar to) that of \cite{CNN},
but our setting allows for some simplifications.
First, following an idea from \cite{biskup2015isoperimetry}, we approximate $D_p(o,x)$ by another random variable $\tilde{D}_p(o,x)$ which has the advantages
of being monotone in $p$ and dependent on only finitely many edges.
Then we apply Russo's formula to get an expression for the derivative of $\E \tilde{D}_p(o,x)$; using
geometric conditions which hold with very high probability we will then argue that this is bounded by
\[
   \E \left[\sum_{e = \{u,v\} \in \pi} \ind_{\left\{u \connectsthru{G_p \setminus \{e\} } v \right\}} d_{G_p \setminus \{e\}}(u,v) \right] + \frac{1}{1-p} o(d_G(o,x)),
\]
where $\pi$ is a ``realizing geodesic'' for $\tilde{D}_p(o,x)$, and the summands can
be interpreted as the number of edges needed to ``detour around'' an edge $e$.
Theorem \ref{thm:main} (nearly) tells us that the summands have uniform exponential tails.
Moreover, the summands are only weakly dependent, and length of the realizing geodesic is very likely to be linear;
therefore we can use a greedy lattice animal bound as in \cite{CNN}
to show that the expectation appearing in Russo's formula is at most a constant (independent of $p$ and $x$) times $d_G(o,x)$,
up to lower order terms.
Integrating and then taking the limit in $x$ then gives the desired bound.

The main differences between our approach and the approach of \cite{CNN} are: first, our definition of $\tilde{D}_p$ is
different, only allowing one to use closed edges at the beginning and end of the path, which simplifies some technical arguments;
second, instead of bounding the ``effective radius'' as in \cite{CNN}, we can directly bound the length of
a bypass around an edge using techniques similar to our proof of Theorem \ref{thm:main};
third, the proof that some convenient events are very likely is simplified by using finite-energy type arguments.
The fact that Lemma \ref{lem:linear-bound} holds for general graphs of polynomial growth 
is to our knowledge a new observation; the idea needed to
extend its scope to this general setting is chiefly the coarse-graining procedure itself (which appeared in \cite{Gorski})
and a basic geometric fact about it which is proved in the course of Proposition \ref{prop:separate}
(which did not appear in \cite{Gorski} where this coarse-graining construction was introduced).

\subsection{Approximating $D_p$}
Throughout this section, we assume that $G$ has polynomial growth of degree $d$.
Fix $5 \le C < \infty$.
Fixing a basepoint $o \in V$, for any $x \in V$, let us define the quantity
\[
   \tilde{D}_p(o,x) := \inf_{\substack{o',x' \in V \\ o' \connectsthru{G_p} x' }}  d_{G_p}(o',x')
   +(\log d_G(o,x) )^C (d_G(o,o') + d_G(x',x) ) .
\]
Note that taking $o'=x'=o$, we get the deterministic upper bound
\[
   \tilde{D}_p(o,x) \le  d_G(o,x) (\log d_G(o,x))^C.
\]

Generally, an upper bound for $\tilde{D}_p$ is realized by an open path between a pair
of points which are close to $o$ and $x$ respectively. 
If any point of that open path has distance $d$ from $o$ and $x$, then the associated
upper bound is at least $d$. Therefore, our a priori upper bound also tells us that
the random variable $\tilde{D}_p$ only depends on edges in the finite set of edges
\[
   E(\tilde{D}) := N(\{o,x\},  d_G(o,x) (\log d_G(o,x))^C ) .
\]

We claim $\tilde{D}_p$ is a good approximation to $D_p$:
\begin{prop} \label{prop:goodapprox}
For every $p > p_c$ we have
\[
   \E |D_p(o,x) - \tilde{D}_p(o,x) | = o(d_G(o,x)).
\]
\end{prop}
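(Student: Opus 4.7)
My plan is to bound $\E[(\tilde D_p - D_p)^+]$ and $\E[(D_p - \tilde D_p)^+]$ separately, showing each is $o(d_G(o,x))$. For the easier direction I would use that $(o',x') = (\mathring o, \mathring x)$ is feasible in the infimum defining $\tilde D_p$ (both lie in the unique infinite cluster, which exists almost surely), giving
\[
\tilde D_p(o,x) \le D_p(o,x) + (\log d_G(o,x))^C \bigl(d_G(o,\mathring o) + d_G(x,\mathring x)\bigr).
\]
Proposition \ref{prop:quantunique}, applied at a fixed scale via a union bound over $o$, yields a uniform stretched-exponential tail on $d_G(o,\mathring o)$, so $\E d_G(o,\mathring o) = O(1)$, and taking expectations gives $\E[(\tilde D_p - D_p)^+] = O((\log d_G(o,x))^C) = o(d_G(o,x))$.

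The harder direction, $\E[(D_p - \tilde D_p)^+]$, I would handle by choosing an approximate minimizer $(o^*,x^*)$ for $\tilde D_p$ (within $1$) and working on a ``good event'' $\Omega_0$ defined as the intersection of: (i) $d_G(o,\mathring o), d_G(x,\mathring x) \le (\log d_G(o,x))^2$; (ii) no vertex within $d_G$-distance $d_G(o,x)$ of either $o$ or $x$ lies in a finite $G_p$-cluster of diameter at least $d_G(o,x)/4$; and (iii) for every $v \in B_G(o, d_G(o,x)/2)$ lying in the infinite cluster, $d_{G_p}(\mathring o, v) \le K d_G(\mathring o, v)$, and analogously for $x$. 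On $\Omega_0$, Theorem \ref{thm:main} applied directly to $(\mathring o, \mathring x)$ combined with (i) shows that $\tilde D_p = O(d_G(o,x))$, which through the penalty term $(\log d_G(o,x))^C(d_G(o,o^*) + d_G(x,x^*))$ forces $d_G(o,o^*) + d_G(x,x^*) \ll d_G(o,x)/2$; condition (ii) then rules out $(o^*, x^*)$ lying in a finite cluster, since such a cluster would have diameter at least $d_G(o^*, x^*) \ge d_G(o,x)/2$. Once $(o^*, x^*)$ is in the infinite cluster, the triangle inequality on the infinite cluster combined with (i) and (iii) gives
\[
D_p \le d_{G_p}(\mathring o, o^*) + d_{G_p}(o^*, x^*) + d_{G_p}(x^*, \mathring x) \le d_{G_p}(o^*, x^*) + K\bigl(d_G(o,\mathring o) + d_G(x,\mathring x) + d_G(o,o^*) + d_G(x,x^*)\bigr),
\]
and since $K \le (\log d_G(o,x))^C$ eventually, the right side is $\le \tilde D_p + K(d_G(o,\mathring o) + d_G(x,\mathring x))$. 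Hence $\E[(D_p - \tilde D_p)^+ \ind_{\Omega_0}] = O(1)$.

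On $\Omega_0^c$ I would use $(D_p - \tilde D_p)^+ \le D_p$ with Cauchy--Schwarz; Theorem \ref{thm:main} together with the tail on $d_G(o,\mathring o)$ gives $\E D_p^2 = O(d_G(o,x)^2)$, so it suffices that $\Prob(\Omega_0^c)$ decays faster than any polynomial in $d_G(o,x)$. Conditions (i) and (ii) follow from Proposition \ref{prop:quantunique} and standard stretched-exponential decay of finite clusters, each combined with a polynomial-growth union bound. The main obstacle I expect is establishing (iii) uniformly in the random pair $(o^*, x^*)$: this requires a union bound over candidates $v \in B_G(o, d_G(o,x)/2)$, with Theorem \ref{thm:main} controlling each summand by $\exp(-ct)$ for $t \ge d_G(\mathring o, v)$. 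The polynomial-growth bound on volumes is precisely what makes $\sum_v \exp(-c d_G(o,v))$ converge and vanish faster than any polynomial once we require $t \ge d_G(o,v) + (\log d_G(o,x))^2$. Some additional care is needed because $\mathring o$ is itself random; I would handle this by first conditioning on its location inside $B_G(o, (\log d_G(o,x))^2)$ (which happens on the event (i) already included in $\Omega_0$) and absorbing the resulting polynomial factor into the union bound.
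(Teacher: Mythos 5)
Your approach mirrors the paper's closely: reduce to a good event of polynomially small complementary probability (using Cauchy--Schwarz against a second-moment bound on $D_p$ and $\tilde D_p$), argue that the $\tilde D_p$-minimizers lie in the infinite cluster (by ruling out large finite clusters via Theorem~\ref{thm:nofinitegiant}), and finish with Theorem~\ref{thm:main} and the triangle inequality. The split into $(\tilde D_p - D_p)^+$ and $(D_p - \tilde D_p)^+$ is a minor reorganization; the substance is the same.

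There is one genuine flaw that needs repair. Condition~(iii) as you state it --- $d_{G_p}(\mathring o, v) \le K\, d_G(\mathring o, v)$ for \emph{all} $v \in B_G(o, d_G(o,x)/2)$ in the infinite cluster --- cannot be achieved with probability $1-o(\mathrm{poly}^{-1})$: for $v$ at graph distance $1$ from $\mathring o$, Theorem~\ref{thm:main} with $t=1$ gives only $\Prob(d_{G_p}(\mathring o, v) > K) \le e^{-c}$, a constant, and the chemical distance between adjacent sites of the infinite cluster can easily exceed any fixed multiple of $1$. This is precisely why the paper's event $A$ imposes $d_{G_p}(u,v) \le K \max(d_G(u,v), M)$ with the floor $M = (\log d_G(o,x))^C$, so the per-pair failure probability is $\exp(-cM)$ and beats the union bound over $\mathrm{poly}(d_G(o,x))$ pairs. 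You do gesture at this fix in your final paragraph ("once we require $t \ge d_G(o,v) + (\log d_G(o,x))^2$"), but it must be built into condition~(iii) from the start; once you do, carrying the floor through your triangle-inequality chain degrades $\E[(D_p - \tilde D_p)^+ \ind_{\Omega_0}]$ from $O(1)$ to $O(M)$, which is still $o(d_G(o,x))$ and thus suffices. A second, smaller imprecision: the step "Theorem~\ref{thm:main} applied directly to $(\mathring o, \mathring x)$ ... shows that $\tilde D_p = O(d_G(o,x))$" treats a probabilistic bound as deterministic on $\Omega_0$; the corresponding chemical-distance event must be added to $\Omega_0$ explicitly (the paper covers this automatically by requiring the chemical-distance condition for all pairs in $V(\tilde D)$), which is straightforward given condition~(i) and the union bound you already perform.
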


The rest of the section is dedicated to proving Proposition \ref{prop:goodapprox}.
Most of this will be a consequence of the fact that the random variable
$d_G(o,\mathring{o})$ has rapidly decaying tails, as stated in the following proposition:
\begin{prop} \label{prop:smallholes}
   For any $p > p_c$ we have
   \[
      \Prob_p( d_G(o, \mathring{o}) \ge n) = O( \sqrt{n} \exp(- \sqrt{10n})).
   \]
\end{prop}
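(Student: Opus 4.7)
The plan is to apply Proposition~\ref{prop:quantunique} (the CMT quantitative uniqueness result) at a sequence of scales $R_k$ all centered at $o$, and then chain the resulting giant crossing clusters into a single infinite cluster containing a vertex close to $o$.

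Setting $R_k := 10(n-1) + k$ for $k \ge 0$, denote by $A_o^{(k)}$ the event \eqref{eq:goodevent} with $v = o$ at scale $R_k$. For $n$ large enough that $R_0$ exceeds the threshold in Proposition~\ref{prop:quantunique}, each event satisfies $\Prob_p(A_o^{(k)}) \ge 1 - e^{-\sqrt{R_k}}$. On $A_o^{(k)}$, write $\kappa^{(k)}$ for the unique component of $G_p \cap B(o, R_k)$ meeting both $B(o, R_k/5)$ and $\bnd B(o, R_k/2)$; this $\kappa^{(k)}$ contains a vertex in $B(o, R_k/10)$ and, using the path from $B(o, R_k/10)$ to $\bnd B(o, R_k)$ guaranteed by the first clause of $A_o^{(k)}$, also a vertex on $\bnd B(o, R_k)$. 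The main geometric step is to show that on $\mathcal{E} := \bigcap_k A_o^{(k)}$, consecutive $\kappa^{(k)}$ and $\kappa^{(k+1)}$ share a vertex. To see this, take a path $P \subset \kappa^{(k+1)}$ from some $u \in B(o, R_{k+1}/10)$ to some $v \in \bnd B(o, R_{k+1})$; since $R_{k+1}/10 \le R_k/5$ (as $R_{k+1} = R_k + 1 \le 2R_k$) we have $u \in B(o, R_k/5)$, while $v \notin B(o, R_k)$. The initial sub-path $P'$ of $P$ inside $B(o, R_k)$ thus goes from $B(o, R_k/5)$ to $\bnd B(o, R_k)$ and in particular crosses $\bnd B(o, R_k/2)$, so the uniqueness clause of $A_o^{(k)}$ forces $P' \subset \kappa^{(k)}$. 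Since $P' \subset \kappa^{(k+1)}$ as well, the two clusters lie in the same $G_p$-component.

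Iterating, all $\kappa^{(k)}$ lie in a common $G_p$-component $C$ of diameter $\ge R_k/2 \to \infty$. By the Burton-Keane theorem (polynomial growth implies amenability), $C$ is almost surely the unique infinite cluster, and since $\kappa^{(0)}$ contains a vertex in $B(o, R_0/10) = B(o, n-1)$, we get $d_G(o, \mathring{o}) \le n - 1 < n$ on $\mathcal{E}$. A union bound together with the integral estimate
\[
   \sum_{k \ge 0} e^{-\sqrt{10(n-1) + k}} \le e^{-\sqrt{10(n-1)}} + \int_0^\infty e^{-\sqrt{10(n-1)+t}}\, dt = \bigl(2\sqrt{10(n-1)} + 3\bigr)\,e^{-\sqrt{10(n-1)}}
\]
(obtained via $u = \sqrt{10(n-1)+t}$) then yields $\Prob_p(d_G(o,\mathring{o}) \ge n) = O(\sqrt{n}\,e^{-\sqrt{10n}})$, as required. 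The main delicate step is the geometric chaining, which hinges on the chain of inequalities $R_{k+1}/10 \le R_k/5 < R_k/2 < R_k < R_{k+1}$; minor rounding issues when $R_k/2$ or $R_k/5$ fails to be an integer can be absorbed by restricting to a suitable subsequence of scales and do not affect the asymptotic bound.
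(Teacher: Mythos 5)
Your proof is correct and takes essentially the same approach as the paper: chain the uniqueness events $A_o$ over an increasing sequence of scales starting near $10n$, observe (the ``gluing'' step, which you usefully spell out in full detail) that on their intersection $d_G(o,\mathring{o}) < n$, and then apply a union bound together with Proposition~\ref{prop:quantunique}. The only cosmetic difference is your choice of consecutive integer scales $R_k = 10(n-1)+k$ versus the paper's $10R$ for $R \ge n$; both sequences satisfy the chaining condition $R_{k+1}/10 \le R_k/5$ and produce the same asymptotic bound $O(\sqrt{n}\,e^{-\sqrt{10n}})$.
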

\begin{proof}
   Recall $A_v(R)$ as defined in \eqref{eq:goodevent} (here we make explicit the dependence on $R$).
   Note that by ``gluing,'' for any $n \ge 1$ we have 
   \[
      \bigcap_{R=n}^{\infty} A_o(10R) \subset \{ B(o,n) \connects \infty \} = \{ d_G(o, \mathring{o}) \le n \}.
   \]
   Therefore, taking complements and using Proposition \ref{prop:quantunique} we have
   \[
      \Prob( d_G(o, \mathring{o}) \ge n+1) \le \sum_{R=n}^{\infty} \Prob(A_o(10R)^c) 
      \le \sum_{R=n}^{\infty} e^{-\sqrt{10R}} = O( \sqrt{n} \exp(- \sqrt{10n})
   \]
   for sufficiently large $n$.
\end{proof}

We also need the fact that it is very unlikely to have a large component which is not part of the
infinite component:
\begin{thm}[Theorem 1.2 of \cite{CMT}] \label{thm:nofinitegiant}
   For any $p > p_c$, there exists $c>0$ such that for all $n \ge 1$,
   \[
      \Prob_p(n \le |C(o)| \le \infty ) \le \exp(-c n^{\frac{d-1}{d}}).
   \]
\end{thm}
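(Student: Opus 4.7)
The plan is to combine the $d$-dimensional isoperimetric inequality available for any transitive graph of polynomial growth of degree $d$ with a renormalization argument at a fixed macroscopic scale, using the quantitative uniqueness machinery (Proposition \ref{prop:quantunique}) and domination by product measures (Theorem \ref{thm:LSS}) already set up in this paper. The exponent $n^{(d-1)/d}$ is exactly a surface-to-volume exponent, so one should expect the bottleneck to be an isoperimetric argument on a coarse-graining.

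\textbf{Setup.} By Trofimov's theorem, $G$ is quasi-isometric to a nilpotent Cayley graph of polynomial growth of degree $d$; such graphs satisfy a $d$-dimensional isoperimetric inequality which, being a quasi-isometry invariant in the bounded-degree setting, transfers to the bound $|\partial_E A| \ge c_0 |A|^{(d-1)/d}$ for every finite connected $A \subset V$ (and the same bound, with a different constant, holds for every coarse-graining $\hat{G}$, which by Proposition \ref{prop:uniformoverscales} has uniformly bounded degree and inherits polynomial growth of degree $d$ from $G$). On the event $\{n \le |C(o)| < \infty\}$, the microscopic cluster $C(o)$ is a finite connected set containing $o$ whose edge-boundary consists entirely of closed edges and has size at least $c_0 n^{(d-1)/d}$.

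\textbf{Why a direct Peierls bound fails and how to fix it.} A naive sum of $(1-p)^{c_0 n^{(d-1)/d}}$ over all finite connected sets $A \ni o$ with $|A| \ge n$ is useless in the supercritical phase because the number of such sets of size $m$ grows exponentially in $m$. Instead, fix a scale $R$ large enough that by Proposition \ref{prop:quantunique} and Theorem \ref{thm:LSS} the bad sites of $\hat{G}$ are stochastically dominated by Bernoulli site percolation of arbitrarily small retention probability (with dependence range $K'$). If $|C(o)| \ge n$ and $|C(o)| < \infty$, then since each Voronoi tile has cardinality $O(R^d)$, the set of macroscopic sites hit by $C(o)$ has cardinality at least $c_1 n / R^d$; on the other hand, by Proposition \ref{prop:macrotomicro} and the gluing property built into $A_v$, any infinite connected component of good macroscopic sites reachable from $\hat{\rho}(o)$ would extend $C(o)$ to infinity, so there must exist a connected bad cutset $\hat{\Pi}$ in $\hat{G}$ separating $\hat{\rho}(o)$ from $\infty$ and enclosing the macroscopic occupied region. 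Applying the isoperimetric inequality in $\hat{G}$ to this enclosed region forces $|\hat{\Pi}| \ge c_2 n^{(d-1)/d} / R^{d-1}$, and a standard Peierls count (number of minimal cutsets of size $m$ around a fixed vertex in a bounded-degree graph is at most $C^m$) combined with the LSS domination yields $\Prob_p\bigl( \text{such a cutset of size} \ge m \text{ exists} \bigr) \le \exp(-c m)$ once $R$ is large enough, giving the desired $\exp(-c n^{(d-1)/d})$.

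\textbf{Main obstacle.} The delicate step is the geometric implication in the middle paragraph: turning "$C(o)$ finite and macroscopically large" into a bona fide minimal bad cutset of size $\gtrsim n^{(d-1)/d}/R^{d-1}$ in $\hat{G}$. This requires identifying the correct macroscopic object enclosed by the cutset (the union of macroscopic sites whose Voronoi tiles meet $C(o)$, possibly together with nearby good sites whose giant components would otherwise glue to $C(o)$) and verifying it is connected and large enough for the isoperimetric inequality. The cleanest way is probably a "filling-in" procedure: start from the macroscopic cluster of $\hat{\rho}(o)$ in the good-site percolation, iteratively add adjacent good sites and bad sites enclosed by closed $\Delta$-clusters, and show that the outer boundary of the resulting connected region is a bad cutset whose size satisfies the isoperimetric lower bound. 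Once this geometric equivalence is in place, the probabilistic estimate reduces to the already-established exponential decay of bad-site clusters (Proposition \ref{prop:indepbound}) summed against a combinatorial count of cutsets.
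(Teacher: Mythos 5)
This theorem is a direct quotation of Theorem~1.2 of the cited CMT paper; the present paper does not prove it, so there is no internal proof to compare against. Your reconstruction (renormalization plus isoperimetry plus a Peierls count of bad cutsets) has roughly the correct shape, but as written it has a genuine gap, which you yourself flag in the ``main obstacle'' paragraph without resolving.

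The gap is the implication from $n \le |C(o)| < \infty$ to the existence of a bad cutset of size $\gtrsim n^{(d-1)/d}/R^{d-1}$. Granted that $n$ is large enough that $C(o)$ crosses the annulus around $\rho(o)$ (so that, if $\hat{\rho}(o)$ is good, $\kappa(\rho(o))\subseteq C(o)$), it does follow via Proposition~\ref{prop:macrotomicro} that the good macro-cluster $\hat{C}^g$ of $\hat{\rho}(o)$ is finite. But applying the isoperimetric inequality to $\hat{C}^g$ gives the desired bound only if $|\hat{C}^g| \gtrsim n/R^d$, and that is exactly what you cannot assume: $C(o)$ may pass predominantly through bad tiles, so $\hat{\rho}(C(o))$ need not lie inside the region enclosed by the bad cutset around $\hat{C}^g$, nor inside $\hat{C}^g$ itself. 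Indeed $C(o)$ can contain $\kappa(w)$ for good sites $\hat{w}$ lying \emph{outside} that cutset, because Proposition~\ref{prop:macrotomicro} produces microscopic connectivity from macroscopic connectivity but not conversely. So the ``filling-in procedure'' you gesture at must produce a different, larger macroscopic object whose bad boundary is forced to be long, and that requires a genuine lemma rather than a one-line appeal. You also implicitly use that every $\hat{G}$ satisfies a $d$-dimensional isoperimetric inequality with constants uniform in the scale $R$, and that (nearly connected) minimal vertex cutsets around $o$ of cardinality $m$ number at most $C^m$; the latter, for a general transitive graph of polynomial growth, relies on a cutset-connectivity lemma in the style of Tim\'ar and is not merely ``standard.'' None of these is insurmountable --- the plan is defensible and close in spirit to CMT --- but they are the substance of CMT's Theorem~1.2, not details to be waved through, which is one reason the authors cite rather than reprove it.
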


Next, the following proposition tells us that in order to prove Proposition \ref{prop:goodapprox},
we may assume events which are likely as $d_G(o,x) \to \infty$:
\begin{prop} \label{prop:wlog}
   Let $A$ be some event (depending on $x$) such that $\Prob(A^c) = o((\log d_G(o,x))^{-2C})$ as $d_G(o,x) \to \infty$. Then we have
   \[
      \E|D_p(o,x) - \tilde{D}_p(o,x)| = \E [\ind_A |D_p(o,x) - \tilde{D}_p(o,x)|] + o(d_G(o,x)).
   \]
\end{prop}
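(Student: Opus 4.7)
The plan is to prove the equivalent statement $\E[\ind_{A^c}|D_p(o,x) - \tilde{D}_p(o,x)|] = o(d_G(o,x))$ by Cauchy--Schwarz. Writing $R := d_G(o,x)$, the hypothesis gives $\Prob(A^c)^{1/2} = o((\log R)^{-C})$, so it will suffice to show
\[
   \bigl( \E|D_p - \tilde{D}_p|^2 \bigr)^{1/2} \le \bigl( 2 \E D_p^2 + 2 \E \tilde{D}_p^2 \bigr)^{1/2} = O\bigl( R (\log R)^C \bigr).
\]
The second term is immediate: taking $o' = x' = o$ in the infimum defining $\tilde{D}_p$ gives the deterministic bound $\tilde{D}_p(o,x) \le R (\log R)^C$ already recorded in the paper. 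So the whole job reduces to producing a polynomial-in-$R$ bound on $\E D_p(o,x)^2$.

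For that bound, let $T_o := d_G(o, \mathring{o})$ and $T_x := d_G(x, \mathring{x})$, so that the triangle inequality gives $d_G(\mathring{o}, \mathring{x}) \le R + T_o + T_x$. Proposition \ref{prop:smallholes} provides a uniform stretched-exponential tail bound $\Prob(T_o \ge n), \Prob(T_x \ge n) = O(\sqrt{n}\, e^{-\sqrt{10n}})$, which in particular makes $T_o, T_x$ have finite moments of every order uniformly in $o, x$. For $M \ge 2KR$, I would split
\[
   \Prob(D_p(o,x) \ge M) \le \Prob\bigl(T_o + T_x > M/(4K)\bigr) + \Prob\bigl(D_p \ge M,\ T_o + T_x \le M/(4K)\bigr).
\]
On the latter event, $\mathring{o}$ and $\mathring{x}$ lie in balls of radius $M/(4K)$ around $o$ and $x$ respectively, and both are in the infinite cluster (hence mutually connected). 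Decomposing over the possible values $(u,v)$ of $(\mathring o, \mathring x)$, using the containment $\{\mathring o = u,\ \mathring x = v\} \subseteq \{u \connects v\}$, and applying Theorem \ref{thm:main} with $t = M/K \ge d_G(u,v)$ yields a bound $\exp(-cM/K)$ for each $(u,v)$; polynomial growth of $G$ bounds the number of such pairs by a polynomial in $M$. Combined with the stretched-exponential control from Proposition \ref{prop:smallholes}, this gives $\Prob(D_p \ge M) \le q(M)$ where $q$ is integrable against $M\, dM$, and the tail integral from $M = 2KR$ is $O(1)$. Hence $\E D_p^2 \le (2KR)^2 + O(1) = O(R^2)$, uniformly in $x$.

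Plugging back in, Cauchy--Schwarz yields
\[
   \E[\ind_{A^c} |D_p - \tilde{D}_p|] \le o\bigl( (\log R)^{-C} \bigr) \cdot O\bigl( R (\log R)^C \bigr) = o(R),
\]
which is exactly the claim. The main technical obstacle is the second-moment estimate on $D_p$: one must combine three ingredients carefully (Theorem \ref{thm:main}'s chemical-distance tails for deterministic endpoints, Proposition \ref{prop:smallholes}'s control on where $\mathring o, \mathring x$ sit, and polynomial volume growth) while handling the fact that $\mathring o, \mathring x$ are random. Everything else is essentially bookkeeping.
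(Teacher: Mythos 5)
Your proposal is correct and follows essentially the same route as the paper: reduce via Cauchy--Schwarz to second-moment bounds, note $\E\tilde{D}_p^2 = O(d_G(o,x)^2(\log d_G(o,x))^{2C})$ deterministically, and establish $\E D_p^2 = O(d_G(o,x)^2)$ by combining Theorem~\ref{thm:main}, Proposition~\ref{prop:smallholes}, and polynomial volume growth. The only cosmetic difference is the order of operations (you apply Cauchy--Schwarz to $|D_p-\tilde{D}_p|$ directly, the paper first splits by the triangle inequality and then applies Cauchy--Schwarz to each piece), and your choice of radius $M/(4K)$ for the ball in which $\mathring{o},\mathring{x}$ are localized cleanly ensures $d_G(u,v)\le t$ before invoking Theorem~\ref{thm:main}.
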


\begin{proof}
   We have by Cauchy-Schwarz
   \begin{align*}
      \E[ |D_p(o,x) - \tilde{D}_p(o,x)|] - \E[ \ind_A |D_p(o,x) - \tilde{D}_p(o,x)|] 
      &=\E[ \ind_{A^c} |D_p(o,x) - \tilde{D}_p(o,x)| ]\\
      &\le \E[ \ind_{A^c} D_p(o,x) ] + \E[ \ind_{A^c} \tilde{D}_p(o,x)] \\
      &\le \sqrt{ \Prob(A^c) } \left(\sqrt{ \E[D_p(o,x)^2]} + \sqrt{\E[\tilde{D}_p(o,x)^2]}\right).
   \end{align*}
   Since $\sqrt{ \Prob(A^c) } = o((\log d_G(o,x))^{-C})$, it then suffices to show that
   $ \E[D_p(o,x)^2]$ and $\E[\tilde{D}_p(o,x)^2]$ are $O(d_G(o,x)^2 (\log d_G(o,x))^{2C})$.
   
   We have a deterministic bound $\tilde{D}_p(o,x)^2 = O( d_G(o,x)^2 (\log d_G(o,x))^{2C})$,
   so it remains to consider $D_p(o,x)^2$.
   Let $K, c$ be as in Theorem \ref{thm:main}. Then, we have
   \[
      \E[ D_p(o,x)^2 ] = \E[ d_{G_p}(\mathring{o}, \mathring{x})^2 ] 
      \le 4K^2 d_G(o,x)^2 + \sum_{t \ge 2 d_G(o,x) } K^2 (t+1)^2 \Prob(K(t+1)\ge d_{G_p}(\mathring{o}, \mathring{x}) \ge Kt ).
   \]
   Using Theorem \ref{thm:main} and Proposition \ref{prop:smallholes}, we then obtain
   \begin{align*}
      \Prob( d_{G_p}(\mathring{o}, \mathring{x}) \ge Kt ) &\le
      \Prob( d_G(o, \mathring{o}) > t/2) + \Prob( d_G(x, \mathring{x}) > t/2)
      + \sum_{\substack{u \in B(o,t/2), \\ v \in B(x,t/2)}} \Prob( \mathring{o}=u, \mathring{x}=v, d_{G_p}(u,v) \ge Kt ) \\
      &\le 2\Prob( d_G(o, \mathring{o}) > t/2)
      + \sum_{\substack{u \in B(o,t/2), \\ v \in B(x,t/2)}} \Prob( u \connects v, d_{G_p}(u,v) \ge Kt ) \\
      &\le O( \sqrt{t} \exp(-\sqrt{5t}) ) + O(t^{2d}) \exp(-c t).
   \end{align*}
   Thus we have
   \[
      \E[D_p(o,x)^2] \le 4K^2 d_G(o,x)^2 + \sum_{t \ge 2 d_G(o,x)} t^2 O( \sqrt{t} \exp( - \sqrt{5 t} ) = O(d_G(o,x)^2),
   \]
   as desired.
\end{proof}

From here we can prove Proposition \ref{prop:goodapprox}.
\begin{proof}[Proof of Proposition \ref{prop:goodapprox}]
   For succinctness of notation, let us denote $M := (\log d_G(o,x))^C$.
   Again (fixing $p>p_c$) let $K$ be as in Theorem \ref{thm:main}.
   Let us also denote by $E(\tilde{D})$ the (finite) set of edges that determine
   $\tilde{D}_p(o,x)$; as remarked above this is contained in $N(\{o,x\},  M d_G(o,x))$.
   Denote by $V(\tilde{D})$ this set of vertices which are endpoints of edges in $E(\tilde{D})$.
   We define the following event:
   \begin{align*}
      A := &\bigcap_{u,v \in V(\tilde{D})} \left( \{u \connectsthru{G_p} v\}^c \cup 
      \left\{ d_{G_p}(u,v) \le K \max\left( d_G(u,v), M \right) \right\} \right) \\
      &\cap \bigcap_{w \in V(\tilde{D})} \left( \{ w \connectsthru{G_p} \infty \} \cup
      \{|C(w)| \le d_G(o,x)/2 \} \right) \\
      &\cap \left\{ d_G(o, \mathring{o}), d_G(x, \mathring{x}) \le M \right\}.
   \end{align*}
   In words, $A$ is the event that in $V(\tilde{D})$, all pairs of points which are connected in $G_p$ have chemical distance
   not too large; and there are no large components except the infinite component; and that
   both $o$ and $x$ are not too far from the infinite component.
   By Theorem \ref{thm:main}, Theorem \ref{thm:nofinitegiant}, and Proposition \ref{prop:smallholes}, we have that
   \begin{align*}
      \Prob(A^c) \le &O\left( \left( M d_G(o,x) \right)^{2d} \right) \exp\left(-cM\right) \\
      &+ O( (M d_G(o,x))^d ) \exp\left(-c'\left(\frac{d_G(o,x)}{2}\right)^{\frac{d-1}{d}}\right) \\
      &+ O\left( \sqrt{M} \exp(-\sqrt{10M}) \right) \\
      &= O(M^{-2}),
   \end{align*}
   so that $A$ satisfies the hypothesis of Proposition \ref{prop:wlog}. Thus, if 
   we show a uniform almost sure bound
   \[
      \ind_A |\tilde{D}_p(o,x) - D_p(o,x)| = o(d_G(o,x)),
   \]
   Proposition \ref{prop:goodapprox} will follow from this and Proposition \ref{prop:wlog}.
   
   So assume that $A$ holds. Denote by $\tilde{o}$ and $\tilde{x}$ the vertices realizing $\tilde{D}_p(o,x)$;
   that is, they satisfy
   \[
      \tilde{D}_p(o,x) = d_{G_p}(\tilde{o},\tilde{x}) + M(d_G(o,\tilde{o}) + d_G(x,\tilde{x})).
   \]
   Recall also that by definition we have $D_p(o,x) := d_{G_p}(\mathring{o}, \mathring{x})$.
   
   Now since $\mathring{o} \connectsthru{G_p} \mathring{x}$, the definition of $\tilde{D}_p(o,x)$
   gives
   \[
      \tilde{D}_p(o,x) = d_{G_p}(\tilde{o},\tilde{x}) + M[d_G(o,\tilde{o}) + d_G(x,\tilde{x})]
      \le d_{G_p}(\mathring{o}, \mathring{x}) + M[d_G(o,\mathring{o}) + d_G(x,\mathring{x})],
   \]
   which rearranges to
   \begin{align}
      d_G(o,\tilde{o}) + d_G(x, \tilde{x}) 
      &\le \frac{1}{M}[d_{G_p}(\mathring{o},\mathring{x}) - d_{G_p}(\tilde{o},\tilde{x})] 
      + d_G(o,\mathring{o}) + d_G(x,\mathring{x}) \nonumber \\
      &\le \frac{1}{M}[d_{G_p}(\mathring{o},\mathring{x}) - d_{G_p}(\tilde{o},\tilde{x})] + 2M, \label{eq:rearrangement}
   \end{align}
   where in the second line we use the bound on $d_G(o, \mathring{o}), d_G(x, \mathring{x})$ guaranteed
    by $A$.
    
   We see from \eqref{eq:rearrangement} that an upper bound on $d_{G_p}(\mathring{o}, \mathring{x}) - d_{G_p}(\tilde{o},\tilde{x})$
   will easily yield an upper bound on 
   $|\tilde{D}_p(o,x) - D_p(o,x)| = |d_{G_p}(\tilde{o},\tilde{x}) - d_{G_p}(\mathring{o}, \mathring{x})
   + M[d_G(o,\tilde{o}) + d_G(x,\tilde{x})]|$.
   To this end, note that by the triangle inequality,
   \begin{equation} \label{eq:further-upper-bound}
      d_{G_p}(\mathring{o},\mathring{x}) - d_{G_p}(\tilde{o}, \tilde{x}) \le 
      d_{G_p}(\mathring{o}, \tilde{o}) + d_{G_p}(\mathring{x}, \tilde{x});
   \end{equation}
   but this bound is not useful unless we know that $\mathring{o} \connects \tilde{o}$
   and $\mathring{x} \connects \tilde{x}$. To argue this that these are connected, we only need argue
   that $\tilde{o}$ and $\tilde{x}$ lie in the infinite component, 
   since $\mathring{o}$ and $\mathring{x}$ lie in the infinite component by definition.
   Since on $A$ there are no large finite components intersecting $V(\tilde{D})$,
   it then suffices to show that $\tilde{o}$ and $\tilde{x}$ lie in a component of
   size at least $d_G(o,x)/2$, and to do \emph{this} it suffices
   to show that $d_G(\tilde{o}, \tilde{x}) \ge d_G(o,x)/2$,
   since $\tilde{o}$ and $\tilde{x}$ are connected in $G_p$ by definition.
   
   To show this lower bound, first note that by the triangle inequality
   \begin{equation} \label{eq:first-lower-bound}
      d_G(\tilde{o}, \tilde{x}) \ge d_G(o,x) - [d_G(o, \tilde{o}) + d_G(x, \tilde{x})].
   \end{equation}
   Moreover, combining \eqref{eq:rearrangement} with the trivial bound $d_{G_p}(\tilde{o},\tilde{x}) \ge 0$
   gives the upper bound in the first line below,
   which we continue using the assumption that $A$ holds:
   \begin{align*}
      d_G(o,\tilde{o}) + d_G(x, \tilde{x})  
      &\le \frac{1}{M}d_{G_p}(\mathring{o}, \mathring{x}) +2M  \\
      &\le \frac{K}{M}d_G(\mathring{o}, \mathring{x}) + 2M \\
      &\le \frac{K}{M}[d_G(o,x) + d_G(o, \mathring{o}) + d_G(x, \mathring{x})] + 2M \\
      &\le \frac{K}{M}d_G(o,x) + 2K + 2M.  
   \end{align*}
   The inequality on the second line follows from the chemical distance condition given by $A$
   (as well as the fact that $\mathring{o}, \mathring{x} \in V(\tilde{D})$
   and $d_G(\mathring{o}, \mathring{x}) \ge M$, which are both guaranteed
   by the fact that $d_G(o, \mathring{o}), d_G(x, \mathring{x})$ are small when $d_G(o,x)$ is
   sufficiently large).
   The third line is again the triangle inequality, and the fourth line comes from the
   upper bound on $d_G(o, \mathring{o}), d_G(x, \mathring{x})$.
   
   Combing the above with \eqref{eq:first-lower-bound} then gives
   \begin{align*}
      d_G(\tilde{o}, \tilde{x}) &\ge d_G(o,x) - \left[ \frac{K}{M}d_G(o,x) + 2K + 2M\right] \\
      &= \left(1 - \frac{K}{M}\right) d_G(o,x) - 2K - 2M \\
      &\ge \frac{1}{2} d_G(o,x),
   \end{align*}
   where the last line holds as long as $d_G(o,x)$ is sufficiently large. Thus, $\tilde{o}$ and $\tilde{x}$
   lie in a component of size at least $d_G(o,x)/2$, hence they lie in the infinite component (since $A$ holds),
   and therefore $\tilde{o}, \tilde{x}, \mathring{o},$ and $\mathring{x}$ are all connected in $G_p$, 
   as desired.
   
   Therefore, we can use the chemical distance condition guaranteed by $A$ to continue \eqref{eq:further-upper-bound}:
   \begin{align}
      d_{G_p}(\mathring{o},\mathring{x}) - d_{G_p}(\tilde{o}, \tilde{x})
      &\le K \left[ \max\left( d_G(\mathring{o}, \tilde{o}), M \right) 
                      + \max\left( d_G(\mathring{x}, \tilde{x}), M \right)\right] \nonumber \\
      &\le K[ d_G(o, \mathring{o}) + d_G(o, \tilde{o}) + d_G(x, \mathring{x}) + d_G(x, \tilde{x}) ] + 2M \nonumber \\
      &\le K[ d_G(o, \tilde{o}) + d_G(x, \tilde{x}) ] + 4M, \label{eq:top-upper-bound}
   \end{align}
   where the last line comes again from the upper bounds on $d_G(o,\mathring{o}), d_G(x,\mathring{x})$ given by $A$.
   
   Recombining \eqref{eq:top-upper-bound} with \eqref{eq:rearrangement} then gives
   \begin{align*}
      d_G(o, \tilde{o}) + d_G(x,\tilde{x}) &\le
      \frac{1}{M}[d_{G_p}(\mathring{o}, \mathring{x}) - d_{G_p}(\tilde{o}, \tilde{x})] + 2M \\
      &\le \frac{K}{M}\left[ d_G(o, \tilde{o}) + d_G(x, \tilde{x}) \right] + 4K + 2M
   \end{align*}
   which rearranges to
   \begin{align}
      d_G(o, \tilde{o}) + d_G(x, \tilde{x}) &\le \left( 1 - \frac{K}{M} \right)^{-1} \left( 4K + 2M \right) \nonumber \\
      &\le 8K + 4M, \label{eq:aux-upper-bound}
   \end{align}
   where the last line holds whenever $d_G(o,x)$ is sufficiently large that $K/M < 1/2$.
   Plugging \eqref{eq:aux-upper-bound} back into \eqref{eq:top-upper-bound} further gives
   \begin{equation} \label{eq:top-upper-bound-final}
      d_{G_p}(\mathring{o}, \mathring{x}) - d_{G_p}(\tilde{o}, \tilde{x}) 
      \le 8K^2 + 8KM.
   \end{equation}
   Finally, putting together \eqref{eq:aux-upper-bound} and \eqref{eq:top-upper-bound-final} then gives 
   that on $A$,
   \begin{align*}
      |D_p(o,x) - \tilde{D}_p(o,x)| &= 
      |d_{G_p}(\mathring{o},\mathring{x}) - d_{G_p}(\tilde{o}, \tilde{x}) - M[d_G(o, \tilde{o}) + d_G(x, \tilde{x})]| \\
      &\le d_{G_p}(\mathring{o},\mathring{x}) - d_{G_p}(\tilde{o}, \tilde{x}) + M[d_G(o, \tilde{o}) + d_G(x, \tilde{x})] \\
      &\le (8K^2+KM) + (8KM + 4M^2) = O(M^2) = o(d_G(o,x)),
   \end{align*}
   as desired. 
\end{proof}

\subsection{ Bounding the derivative of $\E \tilde{D}_p$ by a sum over the realizing geodesic}
Let $\pi$ be a (random) open path realizing $\tilde{D}(o,x)$; that is, $\pi$ is
an open path between two vertices $\tilde{o}$ and $\tilde{x}$ such that
\begin{equation} \label{eq:dtilde}
   \tilde{D}_p(o,x) = |\pi| + M(d_G(o,\tilde{o}) + d_G(x,\tilde{x}))
\end{equation}
(where again $M = d_G(o,x)^C$).
The goal of this subsection is to prove the following lemma:

\begin{lemma} \label{lem:bound_by_sum}
   Fix $p_0 > p_c(G)$. Then for all $p \in [p_0, 1)$ we have
\[   \left| \frac{d}{dp} \E[\tilde{D}_p(o,x)] \right| \le
   \frac{1}{p} \E\left[ \sum_{e =\{u,v\} \in \pi} 
   \ind_{\left\{u \connectsthru{G_p \setminus \{e\}} v\right\}} d_{G_p \setminus \{e\}}(u,v) \right]
   + \frac{1}{1-p} o(d_G(o,x)),
\]
where the implicit constant in the little o notation may depend on $p_0$ and $G$ but not on $p$.
\end{lemma}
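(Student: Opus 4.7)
The plan is to combine Russo's formula with the geodesic structure of $\tilde{D}_p$ and then split by local connectivity. Since $\tilde{D}_p$ is a decreasing function of the percolation configuration depending only on the finitely many edges in $E(\tilde{D})$, Russo--Margulis gives
\[
   \Bigl|\frac{d}{dp}\E[\tilde{D}_p]\Bigr| \;=\; \sum_e \E\bigl[\tilde{D}_p(\omega_e^-) - \tilde{D}_p(\omega_e^+)\bigr] \;=\; \frac{1}{p}\sum_e \E\bigl[\ind_{\omega_e = 1}(\tilde{D}_p(\omega_e^-) - \tilde{D}_p(\omega))\bigr],
\]
where the second equality comes from conditioning on $\omega_e$. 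The key first observation is that if $e$ is open in $\omega$ but does not lie on any realizing geodesic, then $\pi$ remains an open realizer in $\omega_e^-$, so the summand vanishes. After fixing a measurable rule for selecting $\pi$, the right-hand side thus reduces to $p^{-1}\E\bigl[\sum_{e \in \pi}(\tilde{D}_p(\omega_e^-) - \tilde{D}_p(\omega))\bigr]$.

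For each edge $e = \{u,v\} \in \pi$ I split on the event $\{u \connectsthru{G_p \setminus \{e\}} v\}$. On this event, splicing any open $u$-to-$v$ path in $G_p \setminus \{e\}$ into $\pi$ in place of $e$, while keeping the endpoints $\tilde{o}, \tilde{x}$ fixed, yields an admissible pair, giving
\[
   \tilde{D}_p(\omega_e^-) - \tilde{D}_p(\omega) \;\le\; d_{G_p \setminus \{e\}}(u,v) - 1 \;\le\; d_{G_p \setminus \{e\}}(u,v),
\]
which contributes exactly the first term of the lemma after taking expectations. On the complementary ``bridge'' event, $\tilde{o}$ and $\tilde{x}$ are themselves disconnected in $G_p \setminus \{e\}$ (any alternative route would also connect $u$ and $v$), so I must reroute entirely and fall back on the deterministic bound $\tilde{D}_p(\omega_e^-) \le M d_G(o,x)$, obtained by setting $o' = x' = o$ in the definition, with $M = (\log d_G(o,x))^C$.

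The main obstacle is controlling this bad contribution, bounded by $p^{-1} M d_G(o,x) \cdot \E\bigl[\#\{e \in \pi : e \text{ is a bridge}\}\bigr]$, by $(1-p)^{-1} o(d_G(o,x))$; equivalently, the expected number of bridge edges in $\pi$ must decay faster than $1/M$. The strategy, in the spirit of Proposition \ref{prop:goodapprox}, is to introduce a high-probability event $E$ on which (i) every pair of sites in $V(\tilde{D})$ connected in $G_p$ has chemical distance at most $K$ times their ambient distance (Theorem \ref{thm:main}), (ii) no large finite clusters meet $V(\tilde{D})$ (Theorem \ref{thm:nofinitegiant}), and (iii) $\tilde{o}, \tilde{x}$ themselves lie in the infinite cluster (Proposition \ref{prop:smallholes}). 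On $E$, any would-be bridge $e \in \pi$ between two infinite-cluster vertices can be ruled out by a finite-energy swap: opening a suitable closed edge within a bounded-size neighborhood of $e$ creates an alternative open $u$-$v$ route, and the resulting conditional configuration contradicts the no-large-finite-clusters condition in (ii). On $E^c$, the a priori bound $|\pi| \le M d_G(o,x)$ together with Cauchy--Schwarz and the rapid decay $\Prob(E^c) = o(M^{-2})$ absorbs the contribution into $o(d_G(o,x))$. The $p^{-1}$ prefactor from Russo and any finite-energy costs of the form $(p/(1-p))^{O(1)}$ are dominated by $(1-p)^{-1}$ for $p \in [p_0, 1)$, yielding the form in the statement.
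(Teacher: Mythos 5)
Your opening moves are correct and match the paper: Russo's formula, the conditioning on $\omega_e$ to extract a $1/p$ prefactor, the observation that only $e \in \pi$ contribute, and the splicing bound $\Delta_e f \le d_{G_p\setminus\{e\}}(u,v)$ whenever $u \connectsthru{G_p\setminus\{e\}} v$. The gap is in what happens on the ``bridge'' event, and it is not a small one.

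First, your claim that on a good event $E$ ``any would-be bridge $e \in \pi$ \ldots can be ruled out'' is false. The realizer $\pi$ can perfectly well contain bridges near its endpoints $\tilde{o}, \tilde{x}$ (for instance if $\tilde{o}$ has a single open incident edge), and none of your conditions (i)--(iii) exclude this. The finite-energy swap you invoke is not a pointwise statement that can hold on an event; it is a probabilistic comparison, and as written it does not produce a contradiction with your condition (ii). Second, and more quantitatively fatal: for those bridges you cannot rule out, your fallback bound is the a priori $\tilde{D}_p(\omega_e^-) \le M\, d_G(o,x)$ applied edge by edge. There can be order $M^d$ edges of $\pi$ within distance $M$ of $\tilde{o}$ or $\tilde{x}$, so even if you could confine all bridges to those neighborhoods, your bound would contribute $O(M^{d+1} d_G(o,x))$, which is \emph{not} $o(d_G(o,x))$ since $M \to \infty$. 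The paper avoids this by noting that an edge $e \in \pi \cap B(\tilde{o}, M)$ contributes only $O(M^2)$ to $\Delta_e f$ --- one simply drops the truncated prefix of $\pi$ and pays the penalty $M$ for moving $\tilde{o}$ by at most $M$ --- giving a total near-endpoint contribution of $O(M^{d+2}) = o(d_G(o,x))$.

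The missing structural ingredient is the deletion-tolerant uniqueness event $A'$ defined in \eqref{eq:deleted-uniqueness}: for each $e \in E(\tilde{D})$, the ball around one endpoint retains the ``one giant crossing cluster'' property even in $G_p \setminus \{e\}$. This is where the $\tfrac{1}{1-p}$ cost actually enters, via a clean finite-energy estimate $\Prob_p((A'_{v,e})^c) \le \tfrac{1}{1-p}\Prob(\text{uniqueness fails at }v)$; and on $A'$ every edge of $\pi$ at distance $> M$ from the endpoints automatically has its two flanking arcs in the same component of $(G_p\setminus\{e\}) \cap B(v,M)$, i.e.\ $u \connectsthru{G_p\setminus\{e\}} v$. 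Your event $E$ is a different (and weaker, for this purpose) event borrowed from the proof of Proposition~\ref{prop:goodapprox}; it does not see the deleted configuration $G_p\setminus\{e\}$ at all, so it cannot furnish the bypass. You would need to introduce $A'$ (or an equivalent), prove its probability bound via finite energy, and then split edges of $\pi$ into ``near-endpoint'' and ``far-from-endpoint'' classes with separate bounds, rather than attempting to eliminate bridges outright.
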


The first ingredient in the proof of Lemma \ref{lem:bound_by_sum} is Russo's formula,
which we can apply since
$\tilde{D}_p$ is a decreasing random variable depending only on a finite set of edges. Recall:
\begin{prop}(Russo's Formula; see e.g. \cite[Theorem 2.32]{grimmett1999percolation})
   Let $f : 2^E \to \R$ be a decreasing function, in the sense that $\omega \subset \omega'$ implies
   $f(\omega) \ge f(\omega')$. Suppose further that $f$ only depends on a finite set of edges
   (that is, there exists $F \subset E$ finite such that $f(\omega) = f(\omega \cap F)$ for all $\omega \in 2^E$).
   Then $\E[f(G_p)]$ is differentiable in $p$, and its derivative is given by
   \[ 
      \frac{d}{dp} \E[f(G_p)] = - \sum_{e \in E} \E[ \Delta_e f (G_p) ],
   \]
   where for $\omega \in 2^E$ and $e \in E$ we define 
   $ \Delta_e f(\omega) := f(\omega \setminus \{e\}) - f(\omega \cup \{e\})$.
\end{prop}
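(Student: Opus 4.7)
The plan is to reduce the statement to differentiating a polynomial in $p$ and then carefully regrouping. The key observation is that because $f$ depends only on the finite set $F \subset E$, we can write
\[
   \E[f(G_p)] = \sum_{\omega \subset F} f(\omega)\,p^{|\omega|}(1-p)^{|F|-|\omega|},
\]
which is a polynomial in $p$ of degree $|F|$; this immediately gives differentiability (in fact smoothness), so all that remains is to compute the derivative.

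Next, I would differentiate term by term and rewrite the combinatorial factors $|\omega|$ and $|F|-|\omega|$ that appear as sums over $e \in F$ of $\ind_{e \in \omega}$ and $\ind_{e \notin \omega}$ respectively. Interchanging the order of summation splits the derivative into a sum over $e \in F$. For each such $e$, pairing each $\omega \subset F$ containing $e$ with $\omega' := \omega \setminus \{e\} \subset F \setminus \{e\}$ yields
\[
   \sum_{\omega \subset F} f(\omega)\,\ind_{e \in \omega}\,p^{|\omega|-1}(1-p)^{|F|-|\omega|}
   = \sum_{\omega' \subset F \setminus \{e\}} f(\omega' \cup \{e\})\,p^{|\omega'|}(1-p)^{|F \setminus \{e\}|-|\omega'|}
   = \E[f(G_p \cup \{e\})],
\]
where the last equality uses $f(\omega) = f(\omega \cap F)$ to recognize the right-hand side as an expectation with the status of $e$ forced to be open. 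The analogous pairing shows the other piece equals $\E[f(G_p \setminus \{e\})]$, and therefore
\[
   \frac{d}{dp}\E[f(G_p)]
   = \sum_{e \in F}\bigl(\E[f(G_p \cup \{e\})] - \E[f(G_p \setminus \{e\})]\bigr)
   = -\sum_{e \in F}\E[\Delta_e f(G_p)].
\]
Finally, since $\Delta_e f \equiv 0$ whenever $e \notin F$, the sum may be harmlessly extended to all of $E$, producing the claimed formula.

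There is no substantive obstacle: the decreasing hypothesis on $f$ plays no role in deriving the identity itself (it only guarantees $\Delta_e f \ge 0$, which is what makes the formula useful in applications such as the preceding lemma). The only things requiring attention are the bookkeeping in the pairing $\omega \leftrightarrow \omega \setminus \{e\}$, and the verification that moving between $\E[f(G_p)]$ and the ``force $e$ open/closed'' expectations $\E[f(G_p \cup \{e\})]$, $\E[f(G_p \setminus \{e\})]$ is legitimate; both of these are immediate consequences of the hypothesis that $f(\omega)$ depends only on $\omega \cap F$.
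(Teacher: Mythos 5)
Your argument is correct and complete: writing $\E[f(G_p)]$ as the polynomial $\sum_{\omega \subset F} f(\omega)\,p^{|\omega|}(1-p)^{|F|-|\omega|}$, differentiating, splitting the factors $|\omega|$ and $|F|-|\omega|$ into sums of indicators over $e \in F$, and recognizing the two resulting pieces as $\E[f(G_p \cup \{e\})]$ and $\E[f(G_p \setminus \{e\})]$ is exactly the standard proof of Russo's formula for finitely-supported functions (the one in the cited reference \cite[Theorem 2.32]{grimmett1999percolation}); the paper itself does not reprove this proposition but simply cites it. Your side remarks are also accurate: the monotonicity hypothesis is irrelevant to the identity itself, and the extension of the sum from $F$ to $E$ is harmless because $\Delta_e f \equiv 0$ for $e \notin F$.
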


The other ingredient is a geometric condition that holds with very high probability.
This condition will allow us to assume that the endpoints of most edges $e$ in the realizing
geodesic $\pi$ are still connected in $G_p \setminus \{e\}$, which will be helpful
for bounding the influence $\Delta_e \tilde{D}(o,x)$.

More specifically, define the following ``deletion-tolerant'' version of the uniqueness event $A_v(M)$
(where here we again take $M := (\log d_G(o,x))^C$):
\begin{equation} \label{eq:deleted-uniqueness}
   A'_{v,e} := 
   \left\{ \begin{array}{c}
       \mbox{ at most one component of } (G_p \setminus \{e\}) \cap B(v,R) \\
   \mbox{ intersects both } B(v, \frac{M}{5}) \mbox{ and } \bnd B(v, \frac{M}{2}) 
   \end{array} \right\}.
\end{equation}
Then for each edge $e$ of $E(\tilde{D})$ (recall that $E(\tilde{D})$ is the finite
set of edges which $\tilde{D}(o,x)$ depends on), arbitrarily choose one endpoint $v(e)$ of $e$, and then define
\[
   A' := \bigcap_{e \in E(\tilde{D})} A'_{v(e),e}.
\]
Then $A'$ holds with very high probability:
\begin{prop}
   Let $p_0 > p_c$. Then there exists $R_0 < \infty$ such that whenever $M > R_0$, we have
   \[
      \Prob_p((A')^c) \le \frac{1}{1-p} O( [Md_G(o,x)]^d ) \exp(- \sqrt{M})
   \]
   for all $p \in [p_0,1]$, where the implicit constants only depend on the graph $G$, not $p$ or $x$.
\end{prop}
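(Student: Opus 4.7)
The plan is to combine a union bound over $E(\tilde D)$ with a finite-energy resampling argument that reduces the edge-deleted uniqueness event to the ordinary uniqueness event controlled by Proposition \ref{prop:quantunique}. First, since $E(\tilde D) \subset N(\{o,x\}, M\,d_G(o,x))$, and $G$ has bounded degree and polynomial growth of degree $d$, we get $|E(\tilde D)| = O([M\,d_G(o,x)]^d)$. By the union bound
\[
   \Prob_p((A')^c) \le \sum_{e \in E(\tilde D)} \Prob_p\!\bigl((A'_{v(e),e})^c\bigr),
\]
so it suffices to establish, for any fixed edge $e$ and chosen endpoint $v = v(e)$, the per-edge estimate
\[
   \Prob_p\!\bigl((A'_{v,e})^c\bigr) \;\le\; \frac{e^{-\sqrt{M}}}{1-p},
\]
uniformly in $p \in [p_0,1)$ for $M$ beyond some threshold $R_0(p_0)$.

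The per-edge bound is where the finite-energy trick enters. The key observation is that $A'_{v,e}$ is defined through $G_p \setminus \{e\}$, hence is measurable with respect to the states of edges other than $e$; in particular $A'_{v,e}$ is independent of the state $\omega_e$ of $e$. Therefore
\[
   \Prob_p\!\bigl((A'_{v,e})^c\bigr) \;=\; \Prob_p\!\bigl((A'_{v,e})^c \mid \omega_e = 0\bigr) \;=\; \frac{\Prob_p\!\bigl((A'_{v,e})^c,\;\omega_e = 0\bigr)}{1-p}.
\]
On the event $\{\omega_e = 0\}$ we have $G_p \setminus \{e\} = G_p$, so $(A'_{v,e})^c \cap \{\omega_e = 0\}$ is contained in $U_v(M)^c$, where $U_v(M)$ is the ``at most one component of $G_p \cap B(v,M)$ intersects both $B(v,M/5)$ and $\bnd B(v,M/2)$'' part of the good event $A_v(M)$. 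Since $A_v(M) \subset U_v(M)$, Proposition \ref{prop:quantunique}---whose threshold is uniform in $p \in [p_0,1]$, by the remark following it---gives $\Prob_p(U_v(M)^c) \le \Prob_p(A_v(M)^c) \le e^{-\sqrt{M}}$ for $M \ge R_0(p_0)$. Substituting and then combining with the union bound yields the claim.

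I do not anticipate any substantive obstacle: the argument is essentially one line of finite energy plus a union bound. The only point worth flagging is the appearance of the $1/(1-p)$ prefactor, which is intrinsic to the resampling; it is benign in the eventual application, since $(A')^c$ is used in conjunction with the Russo-type expression for $\frac{d}{dp}\E\tilde D_p(o,x)$, and only needs to be integrated against $dp$ up to some $p < 1$.
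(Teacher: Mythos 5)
Your proof is correct and follows essentially the same route as the paper's: a finite-energy resampling argument exploiting the fact that $A'_{v,e}$ is independent of the state of $e$, reducing the per-edge bound to the uniqueness event controlled by Proposition \ref{prop:quantunique}, followed by a union bound over $E(\tilde{D})$. The only cosmetic difference is that you explicitly introduce the intermediate event $U_v(M)$ and the inclusion $A_v(M) \subset U_v(M)$, whereas the paper compares directly to the complement of the uniqueness half of $A_v$; this is the same calculation.
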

\begin{proof}
   This is essentially a ``finite-energy argument''.
   Note that $A'_{v,e}$ is independent of $e$, and therefore
   \begin{align*}
      (1-p)\Prob_p({A'_{v,e}}^c) &= \Prob_p({A'_{v,e}}^c, e \mbox{ closed }) \\
      &\le \Prob\left( \left\{ \begin{array}{c}
          \mbox{ at most one component of } G_p \cap B(v,R) \\
          \mbox{ intersects both } B(v, \frac{M}{5}) \mbox{ and } \bnd B(v, \frac{M}{2}) 
      \end{array} \right\}^c \right) \\
      &\le \exp(-\sqrt{M}).
   \end{align*}
   Therefore a union bound gives
   \begin{align*}
      \Prob({A'}^c) \le \sum_{e \in E(\tilde{D})} \Prob({A_{v(e),e}}^c) \le \frac{1}{1-p} O([Md_G(o,x)]^d)\exp(-\sqrt{M}).
   \end{align*}
\end{proof}


\begin{proof}[Proof of Lemma \ref{lem:bound_by_sum}]
To apply Russo's formula, take $f(G_p) = \tilde{D}(o,x)$.
Note that since $\Delta_e f(G_p)$ is independent of whether the edge $e$ is open or closed,
\begin{align*}
   \frac{d}{dp} \E[f(G_p)] &= - \sum_{e \in E} \frac{1}{p}\E[ \ind_{\{e \mbox{ open}\}} \Delta_e f (G_p) ] \\
   &= - \frac{1}{p} \E\left[ \sum_{e \in E} \ind_{\{e \mbox{ open}\}} \Delta_e f (G_p) \right].
\end{align*}

Note that if $e \notin \pi$, we have $\ind_{\{e \mbox{ open}\}} \Delta_e f(G_p) = 0$, since in that case
closing $e$ cannot increase $\tilde{D}_p(o,x)$. Thus we have
\begin{equation} \label{eq:just_on_geo}
   \frac{d}{dp} \E[f(G_p)] = - \frac{1}{p} \E\left[ \sum_{e \in \pi} \ind_{\{e \mbox{ open}\}} \Delta_e f (G_p) \right].
\end{equation}

Next, note that on the event $A'$, for any edge $e = \{u,v\} \in \pi \setminus N( \{\tilde{o}, \tilde{x}\}, M)$, we have
that $u \connectsthru{G_p \setminus \{e\}} v$. This is because in that case, the segments of $\pi$ before 
and after $e$ connect $B(v,M/5)$ to $\bnd B(v,M/2)$, and hence are part of the same component.
Therefore for such $e$, $\Delta_e f(G_p) \le \ind_{\left\{u \connectsthru{G_p \setminus \{e\}} v\right\}} d_{G_p \setminus \{e\}}(u,v)$.

         \begin{figure}

\tikzset{every picture/.style={line width=0.75pt}} 

\begin{tikzpicture}[x=0.75pt,y=0.75pt,yscale=-1,xscale=1]

\draw  [color={rgb, 255:red, 74; green, 144; blue, 226 }  ,draw opacity=1 ] (67,130) .. controls (67,116.19) and (78.19,105) .. (92,105) .. controls (105.81,105) and (117,116.19) .. (117,130) .. controls (117,143.81) and (105.81,155) .. (92,155) .. controls (78.19,155) and (67,143.81) .. (67,130) -- cycle ;
\draw  [color={rgb, 255:red, 74; green, 144; blue, 226 }  ,draw opacity=1 ] (402,177) .. controls (402,163.19) and (413.19,152) .. (427,152) .. controls (440.81,152) and (452,163.19) .. (452,177) .. controls (452,190.81) and (440.81,202) .. (427,202) .. controls (413.19,202) and (402,190.81) .. (402,177) -- cycle ;
\draw  [line width=1.5] [line join = round][line cap = round] (94.2,129.6) .. controls (100,129.6) and (106.02,126.33) .. (111.2,124.6) .. controls (111.65,124.45) and (111.99,123.18) .. (112.2,123.6) .. controls (113.47,126.14) and (112.8,130.4) .. (115.2,131.6) .. controls (119.05,133.52) and (131.87,132.62) .. (132.2,132.6) .. controls (141.09,132.04) and (157.18,115.58) .. (166.2,124.6) .. controls (167.58,125.98) and (178.1,133.88) .. (178.2,135.6) .. controls (178.53,141.26) and (178.76,146.96) .. (178.2,152.6) .. controls (177.69,157.68) and (172.68,160.53) .. (174.2,166.6) .. controls (175.07,170.1) and (183.12,171.57) .. (186.2,172.6) .. controls (186.2,172.6) and (189.62,175.43) .. (190.2,175.6) .. controls (202.75,179.19) and (216.36,179.18) .. (227.2,184.6) .. controls (231,186.5) and (229.86,201.17) .. (230.2,205.6) .. controls (230.94,215.18) and (234.6,221.5) .. (240.2,230.6) .. controls (241.1,232.06) and (244.84,241.91) .. (249.2,242.6) .. controls (261.45,244.53) and (283.03,238.77) .. (292.2,229.6) .. controls (299.91,221.89) and (303.42,205.46) .. (317.2,204.6) .. controls (330.76,203.75) and (342.49,207.89) .. (351.2,216.6) .. controls (363.19,228.59) and (361.64,238.29) .. (380.2,236.6) .. controls (385.77,236.09) and (390.81,227.99) .. (394.2,224.6) .. controls (396.29,222.51) and (396.45,215.35) .. (399.2,212.6) .. controls (405.63,206.17) and (409.06,196.87) .. (413.2,188.6) .. controls (414.42,186.17) and (419.34,184.46) .. (421.2,182.6) .. controls (422.61,181.19) and (425.85,176.6) .. (428.2,176.6) ;
\draw [color={rgb, 255:red, 189; green, 16; blue, 224 }  ,draw opacity=1 ][fill={rgb, 255:red, 189; green, 16; blue, 224 }  ,fill opacity=1 ][line width=1.5] [line join = round][line cap = round]   (193.2,176.4) .. controls (192.38,176.13) and (199.38,178.13) .. (203.2,178.4) .. controls (205.38,179.13) and (215.37,180.12) .. (217.2,182.4) .. controls (218.57,196.09) and (207.37,215.81) .. (198.2,220.4) .. controls (194.05,222.48) and (192.41,226.47) .. (186.2,224.4) .. controls (185.41,224.14) and (185.34,214.02) .. (185.2,213.4) .. controls (184.79,211.54) and (170.83,211.56) .. (170.2,208.4) .. controls (168.52,199.99) and (180.55,192.05) .. (184.2,188.4) .. controls (188.82,183.78) and (186.32,176.4) .. (193.2,176.4) -- cycle ;

\draw  [line width=2,red] (200,176)--(207,177);
\draw (200,160) node [anchor=north west][inner sep=0.75pt]   [align=left] {$e=\{p,q\}$};
\draw (83,118) node [anchor=north west][inner sep=0.75pt]   [align=left] {$\tilde{o}$};
\draw (92,130) node[circle,fill=black,inner sep=1.2pt] {} ;
\draw (428.2,176.6) node [anchor=north west][inner sep=0.75pt]   [align=left] {$\tilde{x}$};
\draw (428.2,176.6) node[circle,fill=black,inner sep=1.2pt] {} ;

\draw (110,170) node [anchor=north west][inner sep=0.75pt]   [align=left] {${o}$};
\draw (122,170) node[circle,fill=black,inner sep=1.2pt] {} ;
\draw (450,146.6) node [anchor=north west][inner sep=0.75pt]   [align=left] {${x}$};
\draw (455,142) node[circle,fill=black,inner sep=1.2pt] {} ;

\end{tikzpicture}
   
\caption{Away from $\tilde{o}$ and $\tilde{x}$ each edge $e$ has a bypass,
and we can bound its influence $\Delta_e f$ by the length of the bypass. \label{fig:cost}}
   \end{figure}
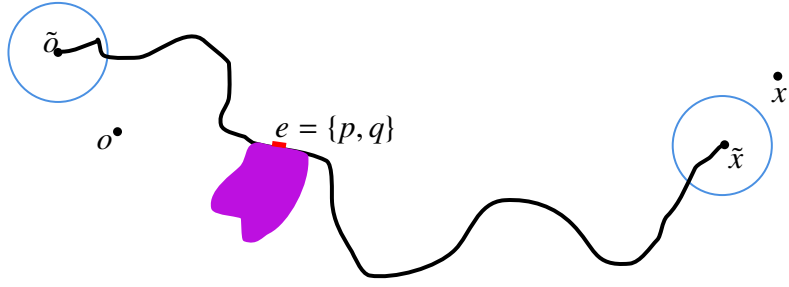

Moreover, any edge of distance at most $M$ from $\tilde{o}$ or $\tilde{x}$ can contribute
at most $M^2$ to $\tilde{D}_p(o,x)$ when it is closed, since if $e \in \pi \cap B(\tilde{o}, M)$,
taking $\pi'$ to be the connected component of $\pi$ containing $\tilde{x}$ gives an open
path from some $\tilde{o}'$ to $\tilde{x}$ with $d_G(o, \tilde{o}') \le d_G(o, \tilde{o}) + M$.
(The argument for edges near $\tilde{x}$ is similar.)
Therefore, we have
\begin{align*}
   \E\left[ \ind_{A'} \sum_{e \in \pi} \ind_{\{e \mbox{ open}\}} \Delta_e f (G_p) \right] 
   &\le M^2 O(M^d) + \E\left[ \sum_{e =\{u,v\} \in \pi} 
   \ind_{\left\{u \connectsthru{G_p \setminus \{e\}} v\right\}} d_{G_p \setminus \{e\}}(u,v) \right] \\
   &= \E\left[ \sum_{e =\{u,v\} \in \pi} 
   \ind_{\left\{u \connectsthru{G_p \setminus \{e\}} v\right\}} d_{G_p \setminus \{e\}}(u,v) \right]
   + o(d_G(o,x)).
\end{align*}

Lastly, if the event $A$ does not hold, we can use the following crude bound.
Since $0 \le \tilde{D}_p(o,x) \le M d_G(o,x)$ almost surely, for every edge $e \in E(\tilde{D})$, $\Delta_e f(G_p) \le M d_G(o,x)$.
Therefore
\begin{align*}
   \E\left[ \ind_{(A')^c} \sum_{e \in \pi} \ind_{\{e \mbox{ open}\}} \Delta_e f (G_p) \right] 
   &\le \Prob_p(A'^c) (M d_G(o,x)) |E(\tilde{D})| \\
   &= \frac{1}{1-p} o(d_G(o,x)),
\end{align*}
where we have used that our bound on $(1-p)\Prob_p(A'^c)$ decays faster than any inverse polynomial of $d_G(o,x)$.
Combining these two bounds with \eqref{eq:just_on_geo} then gives the desired bound.
\end{proof}

\subsection{Greedy lattice animal bounds}
From here we want to show that the expectation
in the right hand side of Lemma \ref{lem:bound_by_sum} is $O(d_G(o,x)$ (with implicit constant independent of 
$p \in [p_0,1]$ and $x$). 
Lemma \ref{lem:linear-bound} below will allow us to conclude this once we show that (roughly speaking):
first, the quantities $\ind_{\left\{u \connectsthru{G_p \setminus \{e\}} v\right\}} d_{G_p \setminus \{e\}}(u,v)$
that we sum have good tails; 
second, although not independent, these random variables have a weak form of independence;
lastly, the path $\pi$ that we sum over is $O(d_G(o,x))$ with very 
high probability.

Let us begin by arguing that the path we sum over is not too long.
For technical reasons, it will be easier for us to control sums over random paths starting at $o$
rather than a random starting point. Therefore, let us define $\bar{\pi}$
to be the concatenation of the following paths: first, an edge-geodesic from $o$
to $\tilde{o}$; then $\pi$; then an edge-geodesic from $\tilde{x}$ to $x$. Note that
   \[
      |\bar{\pi}| := d_G(o, \tilde{o}) + |\pi| + d_G(x, \tilde{x}).
   \]
We have the following control on the length of $|\bar{\pi}|$:
\begin{prop} \label{prop:pi-bar-bound}
   Then there exists $K' < \infty$ such that whenever $d_G(o,x)$ is sufficiently large,
   for any $t \ge d_G(o,x)$, we have
   \[
      \Prob_p( |\bar{\pi}| \ge K't) \le \exp(-t^{1/5}).
   \]
   for all $p \in [p_0,1]$.
\end{prop}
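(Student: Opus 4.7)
The plan is to reduce the bound on $|\bar{\pi}|$ to a bound on $\tilde{D}_p(o,x)$ deterministically, then control the latter via the ``hole sizes'' $d_G(o,\mathring{o}), d_G(x,\mathring{x})$ (using Proposition \ref{prop:smallholes}) and the chemical distance $d_{G_p}(\mathring{o}, \mathring{x})$ (using Theorem \ref{thm:main}). First I would observe that from the defining identity \eqref{eq:dtilde}, since $M = (\log d_G(o,x))^C \ge 1$ once $d_G(o,x)$ is large, both $|\pi| \le \tilde{D}_p(o,x)$ and $d_G(o,\tilde{o}) + d_G(\tilde{x},x) \le \tilde{D}_p(o,x)/M \le \tilde{D}_p(o,x)$, so $|\bar{\pi}| \le 2\tilde{D}_p(o,x)$ deterministically. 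It therefore suffices to prove $\Prob(\tilde{D}_p(o,x) \ge (K'/2)t) \le \exp(-t^{1/5})$.

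Next, specializing the infimum defining $\tilde{D}_p$ to $o' = \mathring{o}, x' = \mathring{x}$ gives
\[
   \tilde{D}_p(o,x) \le d_{G_p}(\mathring{o},\mathring{x}) + M \bigl[d_G(o,\mathring{o}) + d_G(x,\mathring{x})\bigr].
\]
I would then introduce the event $\mathcal{A} := \{d_G(o,\mathring{o}) \le t^{1/2}\} \cap \{d_G(x,\mathring{x}) \le t^{1/2}\}$. Proposition \ref{prop:smallholes} together with a union bound gives $\Prob(\mathcal{A}^c) = O(\exp(-c t^{1/4}))$, which comfortably beats $\exp(-t^{1/5})$ for $t$ large. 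On $\mathcal{A}$, the $M$-weighted term is at most $2Mt^{1/2} = o(t)$ since $M$ is polylogarithmic in $d_G(o,x) \le t$. To control the first term on $\mathcal{A}$, I would union bound over candidate pairs $(u,v) = (\mathring{o},\mathring{x})$ satisfying $d_G(o,u), d_G(x,v) \le t^{1/2}$: each such pair has $d_G(u,v) \le 2t$, and $\{\mathring{o}=u, \mathring{x}=v\}$ forces $u \connectsthru{G_p} v$ through the infinite cluster, so Theorem \ref{thm:main} applied at scale $2t$ gives $\Prob(\mathring{o}=u, \mathring{x}=v,\, d_{G_p}(u,v) \ge 2Kt) \le \exp(-2ct)$. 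Since polynomial growth supplies at most $O(t^d)$ such pairs, the total contribution is at most $\exp(-ct)$ for $t$ large.

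Combining these estimates, on a probability-$(1 - \exp(-t^{1/5}))$ event we obtain $\tilde{D}_p(o,x) \le 2Kt + o(t) \le 3Kt$, hence $|\bar{\pi}| \le 6Kt$, so the conclusion holds with $K' := 6K$. The only delicate point is the choice of scale $t^{1/2}$ in the definition of $\mathcal{A}$: it must be large enough that the polynomial prefactor in the union bound from Theorem \ref{thm:main} is absorbed by $\exp(-2ct)$, and small enough that the slower-decaying tail from Proposition \ref{prop:smallholes} still lies below $\exp(-t^{1/5})$. The relatively weak exponent $1/5$ in the statement leaves ample slack for this balancing. Uniformity in $p \in [p_0,1]$ is inherited directly from the uniformity of the constants in Theorem \ref{thm:main} and Proposition \ref{prop:smallholes}.
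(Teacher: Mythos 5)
Your proposal is correct and follows essentially the same route as the paper: bound $|\bar{\pi}|$ deterministically by $\tilde{D}_p(o,x)$ (the paper uses $|\bar{\pi}| \le \tilde{D}_p(o,x)$ directly; your factor-of-two bound is a harmless weakening), upper-bound $\tilde{D}_p(o,x)$ by $M d_G(o,\mathring{o}) + d_{G_p}(\mathring{o},\mathring{x}) + M d_G(x,\mathring{x})$, truncate the hole sizes at $t^{1/2}$ using Proposition \ref{prop:smallholes}, and union-bound over candidate pairs $(\mathring{o},\mathring{x})$ using Theorem \ref{thm:main}. The scale $t^{1/2}$ and the slack afforded by the exponent $1/5$ are exactly the balancing act the paper performs.
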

\begin{proof}
   We have that almost surely by \eqref{eq:dtilde}
   \[
      |\bar{\pi}| \le \tilde{D}_p(o,x) \le Md_G(o,\mathring{o}) + d_{G_p}(\mathring{o}, \mathring{x}) + Md_G(\mathring{x},x),
   \]
   where recall that $\mathring{o}$ and $\mathring{x}$ are the $d_G$-closest points of
   the infinite component of $G_p$ to $o$ and $x$ respectively. Therefore it suffices to find $C< \infty, c>0$ such that
   \[
      \Prob_p(Md_G(o,\mathring{o}) + d_{G_p}(\mathring{o}, \mathring{x}) + Md_G(\mathring{x},x)  \ge K't) \le 
      \exp(-t^{1/5}).
   \]
   We have 
   \begin{align*}
      \Prob_p(Md_G(o,\mathring{o}) &+ d_{G_p}(\mathring{o}, \mathring{x}) + Md_G(\mathring{x},x) \ge K' t) \le \\
      &\Prob_p(d_G(o,\mathring{o}) > \sqrt{t}) + \Prob_p(d_G(x,\mathring{x}) > \sqrt{t}) \\
       &+ \sum_{\substack{u \in B(o,\sqrt{t}) \\ v \in B(x,\sqrt{t})}}
       \Prob_p\left(\mathring{o}=u,\mathring{x}=v, d_{G_p}(u,v) > K't - 2M\sqrt{t} \right).
   \end{align*}
   So given $p_0 > p_c$, fix $K$ as in Theorem \ref{thm:main}, and fix any $K' > K$.
   Then whenever $d_G(o,x)$ is sufficiently large (depending on $K$ and $K'$),
   for any $t \ge d_G(o,x)$ we have $K't - 2M\sqrt{t} > K(t - 2\sqrt{t})$
   and $t - 2\sqrt{t} \ge d_G(u,v)$ for any $u \in B(o,\sqrt{t}), v \in B(o,\sqrt{t})$.
   Thus by Theorem \ref{thm:main} and Proposition \ref{prop:smallholes}, the above is bounded by
   \begin{align*}
      O(t^{1/4})\exp(-\sqrt{10}t^{1/4})) + O(\sqrt{t}^{2d})\exp(-(t-\sqrt{t})),
   \end{align*}
   (with all implicit constants only depending on $G, K,$ and $K'$, not on $p$ or $t$).
   Whenever $d_G(o,x)$ is sufficiently large, for all $t \ge d_G(o,x)$,
   the above is smaller than $\exp(-t^{1/5})$.
\end{proof}

Now, let us show good tails for the random variables $\ind_{\left\{u \connectsthru{G_p \setminus \{e\}} v\right\}} d_{G_p \setminus \{e\}}(u,v) $:
\begin{lemma} \label{lem:deleted-tails}
   Given $p_0 > p_c$, there exists $c > 0$ such that we have
   \[
      \Prob_p\left( \ind_{\left\{u \connectsthru{G_p \setminus \{e\}} v\right\}} d_{G_p \setminus \{e\}}(u,v) \ge t \right)
      \le \exp(-ct).
   \]
   for all edges $e = \{u,v\}$, $t \ge 1$, and $p \in [p_0, 1]$.
\end{lemma}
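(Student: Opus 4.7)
The plan is to reduce the claim to Theorem \ref{thm:main} by a short conditioning argument that exploits the fact that the random variable in question is independent of $\omega_e$. The crucial observation is that both $\ind_{\{u \connectsthru{G_p \setminus \{e\}} v\}}$ and $d_{G_p \setminus \{e\}}(u,v)$ are measurable with respect to the edges in $E \setminus \{e\}$, and so the event $E_t := \{\ind_{\{u \connectsthru{G_p \setminus \{e\}} v\}} d_{G_p \setminus \{e\}}(u,v) \ge t\}$ is independent of $\omega_e$. Consequently $\Prob_p(E_t) = \Prob_p(E_t \mid e \text{ closed})$.

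On the event $\{e \text{ closed}\}$ the subgraph $G_p \setminus \{e\}$ coincides with $G_p$, so $E_t$ reduces to $\{u \connectsthru{G_p} v,\; d_{G_p}(u,v) \ge t\}$. For $t \ge 2$ this latter event is itself contained in $\{e \text{ closed}\}$ (since $e$ open forces $d_{G_p}(u,v) = 1 < t$), and therefore
\[
\Prob_p(E_t) \;=\; \frac{\Prob_p\bigl(u \connectsthru{G_p} v,\; d_{G_p}(u,v) \ge t\bigr)}{1-p} \qquad (t \ge 2).
\]
Since $d_G(u,v) = 1 \le t$, Theorem \ref{thm:main} applied with $x = u$, $y = v$ bounds the numerator by $\exp(-c_1 t)$ for a constant $c_1 = c_1(p_0) > 0$, at least once $t$ exceeds the constant $K(p_0)$ from that theorem; the finitely many smaller values of $t$ are absorbed into a multiplicative constant via the trivial bound $\Prob_p(E_t) \le 1$. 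This already yields the desired exponential decay uniformly on any sub-interval $[p_0, p^*]$ with $p^* < 1$.

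The main obstacle I foresee is uniformity near $p = 1$, since the prefactor $(1-p)^{-1}$ diverges. To handle $p \in (p^*, 1]$ I plan to run a direct finite-energy argument: coarse simple connectedness of $G$ (Proposition \ref{prop:uniformoverscales}) guarantees a bypass of $e$ of length at most some $L(G)$ between $u$ and $v$ in $G \setminus \{e\}$, so the probability that every bypass of length $\le t$ contains a closed edge decays exponentially in $t$ at a rate uniform over $p \in [p^*, 1]$, and this decay more than compensates for the $(1-p)^{-1}$ loss. Putting the two regimes together gives a bound of the shape $\Prob_p(E_t) \le C(p_0) \exp(-c(p_0) t)$, which is what the subsequent greedy lattice animal estimate in Lemma \ref{lem:linear-bound} actually requires.
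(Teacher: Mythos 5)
Your overall plan is sound and takes a genuinely different route from the paper, so it is worth comparing. You split $[p_0,1]$ into a bulk regime $[p_0,p^*]$ and a high-density regime $(p^*,1]$, handling the first by conditioning on $\{e\text{ closed}\}$ and invoking Theorem~\ref{thm:main}, and the second by a separate Peierls-type argument. The conditioning step is correct and is in fact exactly what the paper's parenthetical aside ``Theorem~\ref{thm:main} and a finite energy argument yields this statement up to a factor of $1/(1-p)$'' is alluding to; your observation that the $(1-p)^{-1}$ is harmless on any compact sub-interval of $[p_0,1)$ is a clean way to exploit it. The paper, by contrast, avoids the regime split entirely by re-running the coarse-graining argument of Theorem~\ref{thm:main} with the single change that $V^R$ is chosen to contain $u$ (so that $\hat{u}=\hat{v}$) and $\hat{u}$ is \emph{always} declared closed: then the surgeries of Lemma~\ref{lem:geolem} never touch $e$, and $|F|$ retains exponential tails uniformly on $[p_0,1]$ with no $(1-p)^{-1}$ loss. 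That unified argument is tidier, but your two-regime approach would also work and is arguably easier to motivate as a patch-up of the naive finite-energy bound.

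Where your proposal is under-specified is the high-density regime. As written, ``coarse simple connectedness guarantees a bypass of $e$ of length at most $L(G)$, so the probability that every bypass of length $\le t$ contains a closed edge decays exponentially in $t$'' does not follow from the existence of a single bounded-length bypass; you need an actual Peierls/cut-counting argument. The correct version is essentially the proof of Theorem~\ref{thm:finitelypresented}: fix a bounded bypass $\beta$ of $e$ in $G\setminus\{e\}$, define $F$ as the union of the $\Delta$-closed edge clusters of the edges of $\beta$ \emph{with $e$ forced closed}, apply the edge version of Lemma~\ref{lem:obstacles} to get an open path in $(N(F,\Delta)\cup\beta)\setminus F$, and then bound $|F|$ by the precluster argument of Proposition~\ref{prop:indepbound} (whose decay rate improves, not worsens, as $p\to 1$). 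With this filled in, the uniformity over $(p^*,1]$ you claim is genuine. Also, minor point: as in the paper's statement, the bound $\le\exp(-ct)$ rather than $\le C\exp(-ct)$ is really only needed for $t$ large, since all that is used downstream (in Lemma~\ref{lem:linear-bound}) is $\sum_N N^{d+2}q_N<\infty$; so absorbing finitely many small $t$ into a constant, as you propose, is perfectly fine.
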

\begin{proof}
   Note that if we replaced $G_p \setminus \{e\}$ with $G_p$, this lemma would be a
   the special case of Theorem \ref{thm:main} where $d_G(u,v) = 1$;
   the proof of this lemma will be quite similar.
   (In fact, Theorem \ref{thm:main} and
   a finite energy argument yields this statement up to a factor of $1/(1-p)$; here we will
   do just a little more work in order to get Lipschitz continuity).
   
   We again do the same coarse-graining $\hat{G}$ of $G$, here taking every $V^R$ to include $u$ so that
   $\hat{u}=\hat{v}$. We then define $F$ in the same way (and assume that $K' \ge 2$).
   Note that in this case $F$ is the union of the closed
   clusters of all the sites in $\hat{G}$ intersecting $B_{\hat{G}}(\hat{\rho}(u), K')$.
   
   We then claim that we have a version of Lemma \ref{lem:geolem} when $e$
   is forced to be closed. That is, we claim that if $u \connectsthru{G_p \setminus \{e\}} v$,
   then there exists a path in $G_p \setminus \{e\}$ which lies in 
   $\bigcup \{B(w,R) : \hat{w} \in N(\hat{u} \cup F, \Delta)\}$.
   To prove this, we treat $\hat{u}$ as a closed site (even if $A_u$ holds)
   and then run the proof of Lemma \ref{lem:geolem};
   since we treat $\hat{u}$ as a closed site
   none of the surgeries we perform to create our microscopic
   open path can involve $e$, so the claim is shown.
   
   The proof then continues as does the proof of Theorem \ref{thm:main};
   we know that an open path lies in $N_{\hat{G}}(\hat{u} \cup F, \Delta)$,
   and we know that $|F|$ has exponential tails,
   so we get that the random variable in question has exponential tails.
\end{proof}

We now use ``greedy lattice animal'' bounds as in \cite{CNN}
to conclude that our sum of variables with good tails over a likely-at-most-linear
path has linear expectation.
\begin{lemma}[Cf. Lemma 2.7 in \cite{CNN}] \label{lem:linear-bound}
   Let $(X_e)_{e \in E}$ be a family of $\N$-valued random variables indexed by the
   edges of a transitive graph $G$ with polynomial growth of degree $d$.
   Suppose that for each $N \ge 1$ and each $2N$-separated
   subset $S$ of $E$, the family $\{X_e = N\}_{e \in S}$ is independent.
   Define $q_N := \sup_{e \in E} \Prob(X_e = N)$.
   Suppose that for some $B <\infty$
   we have $\sum_{N=0}^{\infty} N^{d+2} q_N \le B$.
   Then there exists $C$ depending only on $G$ and $B$ such that the following holds.
   For any random path $\pi$ starting from $o$ and any $L \in \N$ we have
   \[
      \E\left[ \sum_{e \in \pi} X_e \right] \le CL + C \sum_{\ell \ge L} \ell \Prob( |\pi| = \ell )^{1/2}.
   \]
\end{lemma}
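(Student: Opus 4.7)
The plan is to reduce the statement to the classical greedy lattice animal moment bound, using Cauchy--Schwarz to separate the ``short path'' and ``long path'' regimes. Define
\[
   M(\ell) := \max\left\{ \sum_{e \in \xi} X_e : \xi \subset E \text{ connected}, \ o \in V(\xi), \ |\xi| \le \ell\right\}.
\]
Since any path $\pi$ starting at $o$ of length $|\pi| = \ell$ is itself a connected subgraph (``animal'') containing $o$ of size $\ell$, we have the deterministic domination $\sum_{e \in \pi} X_e \le M(|\pi|)$. Splitting the expectation at $L$, using monotonicity of $M$ for the low-$\ell$ part, and Cauchy--Schwarz for the high-$\ell$ tail gives
\[
   \E\left[\sum_{e \in \pi} X_e\right] \le \E[M(L)] + \sum_{\ell \ge L} \E[M(\ell)^2]^{1/2}\Prob(|\pi|=\ell)^{1/2}.
\]
So the whole lemma reduces to the a priori bounds $\E[M(\ell)] \le C\ell$ and $\E[M(\ell)^2]^{1/2} \le C\ell$ uniformly in $\ell$.

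To prove these moment bounds I will argue in the style of Cox--Gandolfi--Griffin--Kesten and Martin, with the combinatorics of $\Z^d$ replaced by the polynomial growth of $G$. First I level-decompose $\sum_{e \in \xi} X_e = \sum_{N \ge 1} N Y_N(\xi)$, where $Y_N(\xi) := |\{e \in \xi : X_e = N\}|$, so that $M(\ell) \le \sum_{N \ge 1} N \, M_N(\ell)$ with $M_N(\ell) := \max_\xi Y_N(\xi)$. Next I use the growth assumption: the number of edges within graph distance $2N$ of a given edge is $O(N^d)$, so the ``conflict graph'' on any animal $\xi$ of size $\le \ell$ has maximum degree $O(N^d)$, and a greedy coloring produces a partition $\xi = \bigsqcup_{i=1}^{k_N} \xi^{(N)}_i$ into $k_N = O(N^d)$ classes with each $\xi^{(N)}_i$ a $2N$-separated edge set. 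By the hypothesis of the lemma, the indicators $\{\ind_{\{X_e = N\}}\}_{e \in \xi^{(N)}_i}$ are then genuinely i.i.d.\ Bernoulli with parameter at most $q_N$.

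Fixing $\xi$, a Chernoff bound applied to each piece $\xi^{(N)}_i$ followed by a union bound over the $k_N = O(N^d)$ pieces yields $\Prob(Y_N(\xi) \ge \ell \cdot (C_1 q_N N^d + s))$ exponentially small in $\ell s / N^d$. The number of connected edge subsets of size $\le \ell$ containing $o$ is at most $\exp(c_G \ell)$ (a standard ``lattice animal'' count using only the maximum degree of $G$), so a union bound over animals absorbs the entropy and yields a tail bound
\[
   \Prob\left(M_N(\ell) \ge C_2 \ell (q_N N^d + 1)\right) \le \exp(-c_2 \ell).
\]
Summing $N \cdot M_N$ and using the moment assumption $\sum_N N^{d+2}q_N \le B$, the ``mean'' contribution is bounded by $\ell \sum_N N^{d+1} q_N \le B\ell$, while the ``entropy'' contribution sums to $O(\ell)$ because $q_N$ must itself decay fast enough under the hypothesis. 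This gives $\E[M(\ell)] \le C\ell$; the same argument gives $\E[M(\ell)^2]^{1/2} \le C\ell$ by integrating the tail bound against $2s\,ds$.

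The main obstacle is verifying the coloring step uniformly over animals: in $\Z^d$ one has an explicit sublattice structure that produces $N$-independent copies very cleanly, whereas here one only has polynomial growth, so one must invoke greedy coloring together with the growth-based degree bound on the conflict graph. This is also where the exponent $d+2$ in the moment hypothesis is tight --- the factor $N^d$ in the partition size is absorbed into the sum, and the two extra factors of $N$ account respectively for the explicit weight $N$ in the level decomposition and a square-root loss when converting the tail bound into the second-moment bound needed for Cauchy--Schwarz.
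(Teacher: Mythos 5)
Your top-level decomposition — passing to the greedy lattice animal maximum $M(\ell)$, splitting the expectation at $L$, and applying Cauchy--Schwarz in the tail — matches the shape of the paper's proof (which cites Lemma 2.7 of \cite{CNN} for exactly this step). Your level decomposition $M(\ell)\le\sum_N N M_N(\ell)$ and the greedy coloring of the ``conflict graph'' to get $O(N^d)$ genuinely independent color classes also match the paper's Lemma \ref{lem:greedy} exactly. So you have the right outer scaffolding.

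The gap is in the per-level bound, and it is not a small one. Your Chernoff bound for a fixed animal gives $\Prob\big(Y_N(\xi)\ge \ell(C_1 q_N N^d + s)\big)\le\exp(-c\,\ell s/N^d)$, and you then union bound over the $\exp(c_G\ell)$ animals. For the union bound to close you need $c\,s/N^d\ge c_G+c_2$, i.e.\ $s\gtrsim N^d$, so the threshold you actually obtain is $C_2\,\ell N^d$, \emph{not} $C_2\,\ell(q_N N^d+1)$ --- you dropped a factor of $N^d$. But $M_N(\ell)\le\ell$ deterministically, so the statement $\Prob(M_N(\ell)\ge C_2\ell N^d)\le e^{-c_2\ell}$ is vacuously true and yields only $\E[M_N(\ell)]\le\ell$. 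Summing $N\cdot\E[M_N(\ell)]$ then diverges regardless of how fast $q_N\to 0$. In fact even the optimal naive-union-bound rate $\E[\max_\xi\sum_e\bar I_{e,N}]\lesssim \ell/\log(1/q_N)$ is not good enough: the resulting $\sum_N N^{d+1}/\log(1/q_N)$ diverges even when $q_N$ decays exponentially, which is the regime of the actual application. The missing ingredient is a renormalization: one must coarse-grain $G$ at scale $R\approx q_N^{-1/d}$ (this is precisely the content of the paper's Lemma \ref{lem:greedy}, reusing Propositions \ref{prop:uniformoverscales} and \ref{prop:separate}), so that a path of length $\ell$ meets only $O(\ell/R)=O(\ell q_N^{1/d})$ tiles, the animal entropy drops to $\exp(c\ell q_N^{1/d})$, and the Chernoff/entropy balance is achieved at threshold $\sim \ell q_N^{1/d}$. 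This is what produces the crucial $q_N^{1/d}$ (rather than $q_N$ or $O(1)$) scaling in $\E[\Gamma_{L,N}]\lesssim LN^d q_N^{1/d}$, without which the sum over levels cannot converge.

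So I would push back on your closing remark: the coloring step is not the main obstacle --- it is routine, and the paper does it essentially as you describe. The real work in Lemma \ref{lem:greedy} is the scale-$q_N^{-1/d}$ coarse-graining, which is precisely the analogue of the sublattice renormalization in $\Z^d$-style greedy lattice animal proofs (Cox--Gandolfi--Griffin--Kesten, Martin) that your write-up omits. Relatedly, the jump from ``a single tail bound at one threshold'' to $\E[M(\ell)^2]\le C\ell^2$ ``by integrating against $2s\,ds$'' needs a genuine deviation-dependent tail; the paper gets this from the exponential-in-$s$ bound produced by the coarse-grained Chernoff argument.
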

The proof of Lemma \ref{lem:linear-bound} is exactly the same as the proof
of Lemma 2.7 in \cite{CNN}; we simply need the following
analogue of their Lemma 2.6 for graphs of polynomial growth:
\begin{lemma}[Cf. Lemma 2.6 in \cite{CNN}] \label{lem:greedy}
   Let $G$ be a transitive graph which has polynomial growth of degree $d$.
   Given $N \in \N$, let $I_{e,N}$ be a collection of Bernoulli random variables indexed by
   edges $e$ of $G$ such that for every $2N$-separated subset $S$ of $E$,
   the family $\{I_{e,N}\}_{e \in S}$ is independent.
   Define
   \[
      \Gamma_{L,N} := \max \left\{ \sum_{e \in \pi} I_{e,N} : \pi \mbox{ a path from } o \mbox{ of length at most } L \right\},
   \]
   \[
      q_N := \sup_{e \in E} \E[ I_{e,N} ].
   \]
   Then there exists a positive constant $C$ depending on $G$ such that for all $L \in \N$,
   \[
      \E[ \Gamma_{L,N} ] \le CLN^d q_N^{1/d}.
   \]
\end{lemma}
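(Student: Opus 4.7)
The plan is to follow the proof of \cite[Lemma 2.6]{CNN}, which handles $G = \Z^d$, observing that the argument there uses only two features of the ambient graph: polynomial volume growth of balls, and a uniform bound $D$ on vertex degree. Both features hold for any transitive graph of polynomial growth degree $d$, so the $\Z^d$ argument adapts with only routine modifications.

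Concretely, the plan is as follows. First, I would partition the edge set $E$ into $M = O(N^d)$ color classes $E_1,\ldots,E_M$, each $2N$-separated. This is possible by a greedy coloring, using that any ball of radius $2N$ in $G$ contains at most $C N^d$ edges (by polynomial growth and transitivity). By the independence hypothesis, within each color class $E_i$ the random variables $\{I_{e,N}\}_{e \in E_i}$ are iid Bernoullis with parameter at most $q_N$. Second, for each color class $E_i$ I would establish the single-color greedy lattice animal bound
\[
\E \max\left\{\sum_{e \in \pi \cap E_i} I_{e,N} : \pi \text{ a path of length at most } L \text{ from } o\right\} \le C_1 L q_N^{1/d}.
\]
Finally, summing over the $M = O(N^d)$ color classes and using $\Gamma_{L,N} \le \sum_i \Gamma_{L,N}^{(i)}$ yields the desired bound $\E[\Gamma_{L,N}] \le CLN^d q_N^{1/d}$.

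The main obstacle is the single-color lattice animal bound, which is both the technical heart of the argument and the source of the $q_N^{1/d}$ exponent. The proof in \cite{CNN} proceeds by a union bound over the at most $D^L$ paths of length $L$ from $o$, combined with a Chernoff-type concentration estimate for sums of iid Bernoullis along a path within a fixed color class; the exponent $1/d$ emerges from carefully balancing the number of paths against the effective size of a path's footprint in a single color class, using polynomial volume growth to control the latter. I expect the argument to transfer to our setting essentially verbatim, since it uses only (a) the bound $D^L$ on paths of length $L$ from a fixed vertex, and (b) the polynomial volume growth $|B_G(v,R)| \le C R^d$. Some care is required to ensure that all constants depend only on the polynomial growth parameters and degree of $G$, but no new ideas beyond those already in \cite{CNN} should be needed.
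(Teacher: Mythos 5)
Your top-level plan matches the paper's: color $E$ into $O(N^d)$ color classes via greedy coloring of the ``$2N$-proximity'' graph on edges, reduce within a class to i.i.d.\ Bernoullis, prove a single-color bound of $O(Lq_N^{1/d})$, and sum over colors. That part is fine and uses only what you say it uses (polynomial growth and transitivity).

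The gap is in the single-color bound, which you correctly call the technical heart but then describe in a way that doesn't work. A direct union bound over the $\le D^L$ walks of length $L$ from $o$ cannot produce the exponent $q_N^{1/d}$: with at most $L$ colored edges per walk, Chernoff gives $\Prob(\sum \ge t) \le e^{-\lambda t}\exp(q_N L(e^\lambda - 1))$, and to beat the $D^L$ entropy at threshold $t \asymp L q_N^{1/d}$ you would need $\lambda q_N^{1/d} \gtrsim \log D$, i.e.\ $\lambda \gtrsim q_N^{-1/d}$, which makes the moment factor $\exp(q_N L e^\lambda)$ blow up. The entropy has to be reduced before the union bound, and that is precisely where the renormalization enters: the paper coarse-grains $G$ at a $q_N$-dependent scale $R = \lceil q_N^{-1/d}\rceil$, notes (via Proposition \ref{prop:separate}) that a length-$L$ path projects to a connected subset of $\hat G(R)$ of size $\le cL/R$, union-bounds over the at most $(2\hat D)^{cL/R}$ such coarse lattice animals using the uniform degree bound $\hat D$ of Proposition \ref{prop:uniformoverscales}, and applies Chernoff to the $\le C'LR^{d-1}$ fine edges covered. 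Choosing $R \approx q_N^{-1/d}$ is exactly what balances the entropy term $(L/R)\log(2\hat D)$ against the concentration term $Lq_N^{1/d}s$ and produces the $q_N^{1/d}$. This Peierls-type argument (following Lemma 6.8 of \cite{DHS} rather than anything verbatim in \cite{CNN}) relies on the coarse-graining machinery already set up in Section \ref{sec:coarse}, in particular the $R$-uniform degree bound on $\hat G$; ``only polynomial growth and a degree bound'' is not quite the right inventory of what's needed. You also need to treat the degenerate case $Lq_N^{1/d} \le 1$ separately (a first-moment bound over the ball $B(o,L)$ suffices there), which the renormalization argument doesn't cover.
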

\begin{proof}
   Consider the graph $\tilde{G}^N$ whose vertex set $\tilde{V}$ equal to the edge set $E$ of $G$,
   and whose edges consist of pairs of edges of $G$ which have distance at most $2N$.
   Since $G$ has polynomial growth of degree $d$, there exists some constant $C$ such that
   $\tilde{G}^N$ has maximum degree at most $C(2N)^d$.
   Therefore, $\tilde{G}^N$ can be colored by at most $C(2N)^d + 1$ colors (using a greedy coloring).
   That is, $E$ is a disjoint union of at most $C(2N)^d + 1$ sets $E_i \subset E$, with each $E_i$
   a $2N$-separated set; hence by assumption the family $\{ I_{e,N} \}_{e \in E_i}$ is independent for each $i$.
   
   Then, if we let $\{ \bar{I}_e \}_{e \in E}$ be an i.i.d. family such that each $I_{e,N}$ has the 
   same distribution as $I_{e,N}$, we have
   \[
      \E[ \Gamma_{L,N} ] \le \sum_i \E\left[ \max_{ |\pi| \le L } \sum_{\pi \cap E_i} I_{e,N} \right]
      = \sum_i \E\left[ \max_{ |\pi| \le L } \sum_{e \in \pi \cap E_i} \bar{I}_{e,N} \right]
      \le (C(2N)^d + 1)\E\left[ \max_{ |\pi| \le L } \sum_{e \in \pi} \bar{I}_{e,N} \right].
   \]
   So we will be done once we establish that 
   \[
      \E\left[ \max_{ |\pi| \le L } \sum_{e \in \pi} \bar{I}_{e,N} \right] = O(L q_N^{1/d}),
   \]
   or equivalently that there exists a constant $C(G)$ depending only on the graph $G$ such that
   \[
      \frac{\E\left[ \max_{ |\pi| \le L } \sum_{e \in \pi} \bar{I}_{e,N} \right]}{L q_N^{1/d}} \le C(G).
   \]
   To prove this we use a Peierls argument which closely follows Lemma 6.8 of \cite{DHS}
   but applies to any transitive graph of polynomial growth (or in fact to any graph of ``strict'' polynomial
   growth, see Remark \ref{rem:strict-poly} below).
   
   First, let us consider the case that $L q_N^{1/d} \le 1$. Then
   \begin{align*}
      \frac{\E\left[ \max_{ |\pi| \le L } \sum_{e \in \pi} \bar{I}_{e,N} \right]}{L q_N^{1/d}}
      \le \frac{1}{L q_N^{1/d}} \sum_{e \in E(B(o,L))} \E \bar{I}_{e,N} \le CL^d q_N (L q_N^{1/d})^{-1}
      \le C(L q_N^{1/d})^{d-1} \le C.
   \end{align*}
   Now we treat the case that $L q_N^{1/d} > 1$.
   To this end, recall the construction of the coarse-grained graph $\hat{G}(R)$ constructed above
   in Section \ref{sec:intro} for
   some scale $R \ge 1$. Then note that for any fixed path $\pi$ of length $L$, we have 
   \[
      \sum_{e \in \pi} \bar{I}_{e,N} \le \sum_{e \in \bigcup_{\hat{v} \in \hat{\rho}(\pi)} E(\hat{v})} \bar{I}_{e,N}.
   \]
   The proof of Proposition \ref{prop:separate} shows that there exists some constant $c>0$
   (independent of $\pi$ and $R$) such that that $|\hat{\rho}(\pi)| \le (c/R)|\pi| = c(L/R)$.
   Thus $\hat{\rho}(\pi)$ is a connected (vertex) subset of $\hat{G}(R)$ of length
   at most $c(L/R)$ and furthermore the number of edges in the right hand sum above is
   \[
      \left| \bigcup_{\hat{v} \in \hat{\rho}(\pi)} E(\hat{v}) \right| \le c(L/R) \cdot C(R/30)^d = C' L R^{d-1}
   \]
   since each tile $\hat{v}$ is contained in a ball of radius of radius $R/30$.
   
   Therefore we have, for each $s \ge 0$,
   \begin{align*}
      \Prob\left( \max_{ o \in \pi, |\pi| = L} \sum_{e \in \pi} \bar{I}_{e,N} \ge Lq_N^{1/d} s \right)
      &\le \Prob\left( \max_{\substack{\hat{o} \in \hat{\pi} \subset \hat{V}, \\ |\hat{\pi}| \le c(L/R)}}
      \sum_{e \in \bigcup_{\hat{v} \in \hat{\pi}} E(\hat{v})} \bar{I}_{e,N} \ge Lq_N^{1/d} s \right) \\
      &\le 
      \sum_{\substack{\hat{o} \in \hat{\pi} \subset \hat{V}, \\ |\hat{\pi}| \le c(L/R)}}
      \left( \prod_{e \in \bigcup_{\hat{v} \in \hat{\pi}}} \E( \exp(\hat{I}_{e,N} ) \right)
      \exp(-Lq_N^{1/d} s) \\
      &\le
      (2 \hat{D})^{c(L/R)} (1 - q_N + e q_N)^{C' L R^{d-1}} \exp(-Lq_N^{1/d} s) \\
      &\le 
      \exp\left( (\log (2\hat{D}) )\cdot c(L/R) + (e-1) q_N \cdot (C' L R^{d-1}) - Lq_N^{1/d}s \right).
   \end{align*}
   where the second line is a union bound and Chernoff bound, and the third line comes from the fact
   that the number of connected vertex subsets of length at most $L/R$ containing a fixed vertex
   in a graph of degree at most
   $\hat{D}$ is at most $(2 \hat{D})^{L/R}$---recall from Section \ref{sec:coarse} that there is a constant $\hat{D}$ independent of $R$ such that
   each $\hat{G}(R)$ has degree at most $\hat{D}$.
   The fourth line follows from the fact that $(1 - q_N + eq_N) \le (e^{e-1})^{q_N}$.
   
   Choosing $R = \lceil q_N^{-1/d} \rceil$ then gives us
   \begin{align*}
      \Prob\left( \max_{ o \in \pi, |\pi| = L} \sum_{e \in \pi} \bar{I}_{e,N} \ge Lq_N^{1/d} s \right)
      &\le
      \exp\left( (\log(2\hat{D})) \cdot c (L \lfloor q_N^{1/d} \rfloor)  + 
      (e-1) \cdot C' L q_N \lceil q_N^{-1/d} \rceil^{d-1}
      - Lq_N^{1/d}s  \right) \\
      &\le \exp( L q_N^{1/d}(C'' - s) ).
   \end{align*}
   So by the layer-cake formula we finally obtain
   \begin{align*}
      \E\left[ \frac{\max_{ o \in \pi, |\pi| = L} \sum_{e \in \pi} \bar{I}_{e,N}}{L q_N^{1/d}} \right]
      &\le C'' + \int_{C''}^{\infty} \Prob\left( \frac{\max_{ o \in \pi, |\pi| = L} \sum_{e \in \pi} \bar{I}_{e,N}}{Lq_N^{1/d}} \ge s \right) ds \\
      &\le C'' + \int_{C''}^{\infty} \exp( - L q_N^{1/d}(s - C'') )ds \\
      &\le C'' + \int_{C''}^{\infty} \exp(-(s-C''))ds = C'' + 1. \\
   \end{align*}
   Thus we have proved the desired bound.
\end{proof}
\begin{rmk} \label{rem:strict-poly}
   Even though Lemmas \ref{lem:linear-bound} and \ref{lem:greedy} are stated for \emph{transitive} graphs,
   in fact the proof above shows that the same results hold for any (not necessarily transitive) 
   graph which which has ``strict'' polynomial growth of degree $d$,
   in the sense that there exist $0<c<C<\infty$ such that for all $R \ge 1$ we have
   \[
      cR^d \le \inf_{v \in V} |B(v,R)| \le \sup_{v \in V} |B(v,R) \le CR^d.
   \]
\end{rmk}

\subsection{Proof of Theorem \ref{thm:cty}}
Finally we can prove Theorem \ref{thm:cty}.
\begin{proof}[Proof of Theorem \ref{thm:cty}]
   First we prove the analogous statement with $D_p$ replaced by $\tilde{D}_p$.
   Fix $p_0 > p_c$. First we prove Lipschitz continuity on $[p_0, 1)$.
   For any $p_0 \le p < q < 1$, we have by Lemma \ref{lem:bound_by_sum}
   \begin{align*}
      0 \le \E \tilde{D}_p(o,x) - \E \tilde{D}_q(o,x) &= \int_p^q - \frac{d}{d\rho} \E \tilde{D}_{\rho}(o,x) d\rho \\
      &\le \int_p^q \frac{1}{\rho}
      \E\left[ \sum_{e =\{u,v\} \in \pi} \ind_{\left\{u \connectsthru{G_{\rho} \setminus \{e\}} v\right\}} 
      d_{G_{\rho} \setminus \{e\}}(u,v) \right]d\rho \\
      & + \int_p^q \frac{1}{1-\rho} o(d_G(o,x)) d\rho \\
      &\le
      \frac{1}{p_0} \int_p^q 
      \E\left[ \sum_{e =\{u,v\} \in \bar{\pi}} \ind_{\left\{u \connectsthru{G_{\rho} \setminus \{e\}} v\right\}} 
      d_{G_{\rho} \setminus \{e\}}(u,v) \right]d\rho \\
      &+ \frac{1}{1-q} (q - p) o(d_G(o,x)).
   \end{align*}
   Again, the implicit constants in the little-o notation do not depend on $x, p,$ or $q$ (but may depend on $p_0$).
   Therefore we have
   \begin{align*}
      \limsup_{x \to \infty} \frac{\left| \E \tilde{D}_p(o,x) - \E \tilde{D}_q(o,x) \right|}{d_G(o,x)} 
      \le
      \limsup_{x \to \infty} \frac{1}{d_G(o,x ) p_0} \int_p^q 
      \E\left[ \sum_{e =\{u,v\} \in \bar{\pi}} \ind_{\left\{u \connectsthru{G_{\rho} \setminus \{e\}} v\right\}} 
      d_{G_{\rho} \setminus \{e\}}(u,v) \right]d\rho.
   \end{align*}
   To bound the right hand side, we employ Lemma \ref{lem:linear-bound}, taking
   \[
      X_e := \ind_{\left\{u \connectsthru{G_{\rho} \setminus \{e\}} v\right\}} 
      d_{G_{\rho} \setminus \{e\}}(u,v).
   \]
   Lemma \ref{lem:deleted-tails} then tells us that for some $c$,
   \[
      q_N := \Prob_p(X_e \le \exp(-cN))
   \]
   for all $\rho \in [p_0, 1]$, and therefore
   \[
      \sum_{N=0}^{\infty} N^{d+2} q_N \le \sum_{N=0}^{\infty} N^{d+2} \exp(-cN) =: B < \infty.
   \]
   Therefore by Lemma \ref{lem:linear-bound} we have $C=C(B,G)$ such that for any $L \in \N$
   \[
      \E\left[ \sum_{e \in \bar{\pi}} X_e \right] \le CL + \sum_{\ell \ge L} \ell \Prob(|\bar{\pi}| = \ell)^{1/2}
   \]
   and then by Proposition \ref{prop:pi-bar-bound}, whenever $d_G(o,x)$ is large enough,
   taking $L = K' d_G(o,x)$ then gives 
   \begin{align*}
      \E\left[ \sum_{e \in \bar{\pi}} X_e \right] &\le CK' d_G(o,x) + 
      \sum_{\ell \ge CK' d_G(o,x)} \ell \exp\left(-\frac{1}{2} \left(\frac{\ell}{K'}\right)^{1/5}\right) \\
      &\le (CK' + 1) d_G(o,x),
   \end{align*}
   where the second inequality holds when $d_G(o,x)$ is large enough.
   Thus, for $C, K'$ depending only on $G$ and $p_0$, for and $p,q \in [p_0, 1)$ we have
   \begin{align*}
      \limsup_{x \to \infty} \frac{\left| \E \tilde{D}_p(o,x) - \E \tilde{D}_q(o,x) \right|}{d_G(o,x)} 
      &\le
      \limsup_{x \to \infty} \frac{1}{d_G(o,x)p_0} \int_p^q  
      (CK' + 1) d_G(o,x) d\rho \\
      &= \frac{(CK + 1)}{p_0}|q-p|.
   \end{align*}
   That is, we have Lipschitz continuity for $\tilde{D}_p$ on $[p_0, 1)$ for any $p_0 > p_c$.
   To deduce the same for $D_p$, note that by Proposition \ref{prop:goodapprox} we have
   \begin{align*}
      \limsup_{x \to \infty} \frac{\left| \E D_p(o,x) - \E D_q(o,x) \right|}{d_G(o,x)} &\le
      \limsup_{x \to \infty} \frac{\left| \E \tilde{D}_p(o,x) - \E \tilde{D}_q(o,x) \right|}{d_G(o,x)} \\
      &+ \limsup_{x \to \infty} \frac{\E |D_p(o,x) - \tilde{D}_p(o,x) |}{d_G(o,x)}
      + \limsup_{x \to \infty} \frac{\E |D_q(o,x) - \tilde{D}_q(o,x) |}{d_G(o,x)} \\
      &\le (CK' + 1)|q - p|.
   \end{align*}
   To deduce Lipschitz continuity on the whole \emph{closed} interval $[p_0,1]$, it then
   only remains to show continuity at $p=1$.
   Note that $\E D_1(o,x) = D_1(o,x) = d_G(o,x)$ almost surely. So we want to show that
   \[
      \limsup_{p \to 1} \limsup_{x \to \infty} \frac{ \E[ D_p(o,x) - d_G(o,x) ] }{d_G(o,x)} = 0.
   \]
   Recalling that $D_p(o,x) = d_{G_p}(\mathring{o}, \mathring{x})$, by triangle inequality
   it suffices to show that
   \[
      \limsup_{p \to 1} \limsup_{x \to \infty} 
      \frac{ \E[ d_{G_p}(\mathring{o},\mathring{x}) - d_G(\mathring{o},\mathring{x}) ] }{d_G(o,x)} = 0
   \]
   and
   \[
      \limsup_{p \to 1} \limsup_{x \to \infty} 
      \frac{ \E[ |d_G(\mathring{o},\mathring{x}) - d_G(o,x)| ] }{d_G(o,x)} 
      \le
      \limsup_{p \to 1} \limsup_{x \to \infty} 
      \frac{ \E d_G(\mathring{o},o)  + \E d_G(x,\mathring{x}) }{d_G(o,x)}
      = 0.
   \]
   This latter equality follows from Proposition \ref{prop:smallholes}.
   
   To get the former equality, we first want to establish a bound
   on $ \E\left[\ind_{\{x' \connectsthru{G_p} y'\}} (d_{G_p}(x',y') - d_G(x',y') )^2 \right]$,
   using a very similar method to the proof of Theorem \ref{thm:main},
   but without coarse-graining; that is to say, using
   a very similar method to the proof of Theorem \ref{thm:finitelypresented}.
   
   Indeed, define given $x', y'$, define $F$ as in the proof of Theorem \ref{thm:finitelypresented}
   (union of $\Delta$-closed clusters of edges in the geodesic $[x',y']$.
   The same argument as in the proof of Theorem \ref{thm:finitelypresented}
   shows that if $x' \connectsthru{G_p} y'$ then
   \[
      0 \le d_{G_p}(x',y') - d_G(x,y) \le |N(F, \Delta)| \le D' |F|,
   \]
   where $D'$ is some constant depending on the graph $G$.
   
   
   Once again $|F|$ is stochastically bounded by a sum of $d_G(x', y')$
   independent random variables $|\tilde{C}_i|$ (the sizes of the preclusters),
   where we have the bound
   \[
      \Prob_p( |\tilde{C}_i| = k ) \le [D'' (1 - p)]^k
   \]
   where $D''$ is some constant independent of $p$. So by Cauchy-Schwarz
   \begin{align*}
      \E\left[ \ind_{\left\{x' \connectsthru{G_p} y' \right\}} (d_{G_p}(x',y') - d_G(x',y'))^2 \right] 
      &\le
      D'^2 \E[ |F|^2 ] \\
      &\le D'^2 \E\left[ \left( \sum_{i=1}^{d_G(x',y')} |\tilde{C}_i| \right)^2\right] \\
      &\le D'^2 d_G(x',y')^2 \E[ |\tilde{C}|^2 ] \\
      &= d_G(x',y')^2 o_{p \to 1}(1).
   \end{align*}
   
   So finally, by Cauchy-Schwarz and Proposition \ref{prop:smallholes}:
   \begin{align*}
      \E[ (d_{G_p}(\mathring{o},\mathring{x}) - d_G(\mathring{o},\mathring{x})) ]
      &= \sum_{x',y' \in V} \E[ \ind_{\mathring{o} = x', \mathring{x} = y'}  
      (d_{G_p}(\mathring{o},\mathring{x}) - d_G(\mathring{o},\mathring{x}))] \\
      &\le \sum_{x',y' \in V} \sqrt{ \Prob(\mathring{o}=x',\mathring{x}=y') 
                                                 \E\left[\ind_{\{x' \connectsthru{G_p} y'\}} (d_{G_p}(x',y') - d_G(x',y'))^2\right]} \\
      &\le o_{p \to 1}(1) \sum_{x',y' \in V} d_G(x',y') O\left(\exp(-\max(d_G(o,x'), d_G(x,y'))^{1/3})\right) \\
      &\le o_{p \to 1}(1) \sum_{k=0}^{\infty}O\left(k^{2d}\right) (d_G(o,x) + 2k) \exp(-k^{1/3}) \\
%
       &= o_{p \to 1}(1) d_G(o,x).
   \end{align*}
   Thus
   \begin{align*}
      \limsup_{p \to 1} \limsup_{x \to \infty} 
      \frac{|\E[ (d_{G_p}(\mathring{o},\mathring{x}) - d_G(\mathring{o},\mathring{x})) ]|}{d_G(o,x)}
      = 0,
   \end{align*}
   and we have continuity at $p=1$ and Lipschitz continuity on $[p_0, 1]$ for every $p_0 > p_c$.
   
   Lipschitz continuity of $\overline{d}$ and $\underline{d}$ then follow immediately from the fact that
   \[
      |\overline{d}(p) - \overline{d}(q)|, |\underline{d}(p) - \underline{d}(q)|
      \le \limsup_{x \to \infty} \frac{| \E[D_p(o,x)] - \E[D_q(o,x)] |}{d_G(o,x)}.
   \]
\end{proof}

\section{Future directions}
Theorem \ref{thm:finitelypresented} proves that an Antal-Pisztora type theorem
for \emph{any} transitive bounded degree coarsely simply connected graph
for $p$ sufficiently close to $1$ (and such that $p > p_u$, where $p_u$ again
is the \emph{uniqueness} threshold for $G$, see \cite{haggstrom2006uniqueness}).
It may seem obvious that this statement should hold for any (bounded degree transitive) graph,
but the proof relies crucially on coarse simple connectedness.
It is therefore natural to ask whether there exists a transitive bounded degree graph with $p_u < 1$
where this theorem fails, i.e. such that for any $p < 1$,
for any $K < \infty$,
$\Prob( x \connectsthru{G_p} y, d_{G_p}(x,y) \ge Kt )$
decays \emph{slower} than exponentially in $t$.
A tantalizing possibility is that the conclusion of Theorem \ref{thm:finitelypresented}
is \emph{equivalent} to being coarsely simply connected (among transitive graphs
with $p_u < 1$), so that \emph{every} such graph which is not
coarsely simply connected provides a counterexample. But this is far from clear.

Another natural question to ask is whether Theorem \ref{thm:finitelypresented} can be pushed
all the way down to $\tilde{p} = p_u$ for a larger class of graphs.
The proof here relies upon quantitative uniqueness bounds which are as yet unproven
in higher generality. However, a possibly more severe obstacle is
that the coarse-graining procedure outlined here is not as well-behaved geometrically
if the graph is not of polynomial growth; in particular, 
the degree of the coarse-grained graph can explode dramatically with the scale
(even though the coarse-grained graph remains $\Delta$-simply connected).
Thus a more subtle coarse-graining construction or a completely different approach may be required.

Another natural question is continuity of time constants for sufficiently large $p$
for general coarsely simply connected graphs (with $p_u < 1$).
Even though graphs which do not have polynomial growth are not expected to
have nice scaling limits, one can still ask whether Theorem \ref{thm:cty} holds as stated for an arbitrary graph.
Since we have good decay on the tails of bypasses $\ind_{\{x \connectsthru{G_p} y\}} d_{G_p}(x,y)$ 
in this regime, it seems very plausible that continuity of the time constants holds as well.
Besides missing results regarding distance to the infinite component and quantitative uniqueness which are used here,
and besides many crude union bounds which used polynomial volume growth,
polynomial growth was crucially used for the lattice animal bounds of Lemma \ref{lem:linear-bound}.
It is possible that more counting arguments can overcome this obstacle,
but a different approach may also be required.

\bibliographystyle{plain} 
\bibliography{chemicaldistancebib}

\end{document}